\numberwithin{equation}{section}
\numberwithin{figure}{section}
\theoremstyle{plain}
\newtheorem{thm}{\protect\theoremname}[section]
\theoremstyle{definition}
\newtheorem{defn}[thm]{\protect\definitionname}
\theoremstyle{remark}
\newtheorem{rem}[thm]{\protect\remarkname}
\theoremstyle{plain}
\newtheorem{lem}[thm]{\protect\lemmaname}
\theoremstyle{plain}
\newtheorem{prop}[thm]{\protect\propositionname}
\theoremstyle{remark}
\newtheorem*{acknowledgement*}{\protect\acknowledgementname}
     \def\section{\@startsection{section}{1}%
     \z@{.7\linespacing\@plus\linespacing}{.5\linespacing}%
     {\bfseries
     \centering
     }}
     \def\@secnumfont{\bfseries}
\newcommand{\Rd}{\mathbb{R}^{d}}
  \providecommand{\lemmaname}{Lemma}
  \providecommand{\propositionname}{Proposition}
  \providecommand{\remarkname}{Remark}
\providecommand{\theoremname}{Theorem}
  \providecommand{\lemmaname}{Lemma}
  \providecommand{\propositionname}{Proposition}
  \providecommand{\remarkname}{Remark}
\providecommand{\theoremname}{Theorem}
  \providecommand{\lemmaname}{Lemma}
  \providecommand{\propositionname}{Proposition}
  \providecommand{\remarkname}{Remark}
\providecommand{\theoremname}{Theorem}
  \providecommand{\lemmaname}{Lemma}
  \providecommand{\propositionname}{Proposition}
  \providecommand{\remarkname}{Remark}
\providecommand{\theoremname}{Theorem}
\providecommand{\acknowledgementname}{Acknowledgement}
\providecommand{\definitionname}{Definition}
\providecommand{\lemmaname}{Lemma}
\providecommand{\propositionname}{Proposition}
\providecommand{\remarkname}{Remark}
\providecommand{\theoremname}{Theorem}
\begin{document}
\title[Limiting distribution for affine processes]{Existence of limiting distribution for affine processes}
\author[Peng Jin]{Peng Jin\textsuperscript{*}}
\thanks{\textsuperscript{*}Peng Jin is partially supported by the STU Scientific Research Foundation for Talents (No. NTF18023).}
\author{Jonas Kremer}
\author{Barbara R\"udiger}

\address[Peng Jin]{Department of Mathematics \\ Shantou University \\ Shantou, Guangdong 515063, China}

\email{pjin@stu.edu.cn}

\address[Jonas Kremer]{Fakult\"at f\"ur Mathematik und Naturwissenschaften\\ Bergische Universit\"at Wuppertal\\ 42119 Wuppertal, Germany}

\email{jkremer@uni-wuppertal.de}

\address[Barbara R\"udiger]{Fakult\"at f\"ur Mathematik und Naturwissenschaften\\ Bergische Universit\"at Wuppertal\\ 42119 Wuppertal, Germany}

\email{ruediger@uni-wuppertal.de}

\date{\today}

\subjclass[2010]{Primary 60J25; Secondary 60G10}

\keywords{affine process, limiting distribution, stationary distribution, generalized Riccati equation}

\begin{abstract}In this paper, sufficient conditions are given for
the existence of limiting distribution of a conservative affine process
on the canonical state space $\mathbb{R}_{\geqslant0}^{m}\times\mathbb{R}^{n}$,
where $m,\thinspace n\in\mathbb{Z}_{\geqslant0}$ with $m+n>0$. Our
main theorem extends and unifies some known results for OU-type processes
on $\mathbb{R}^{n}$ and one-dimensional CBI processes (with state
space $\mathbb{R}_{\geqslant0}$). To prove our result, we combine
analytical and probabilistic techniques; in particular, the stability
theory for ODEs plays an important role. \end{abstract}

\maketitle

\allowdisplaybreaks

\section{Introduction}

Let $D:=\mathbb{R}_{\geqslant0}^{m}\times\mathbb{R}^{n}$, where $m,n\in\mathbb{Z}_{\geqslant0}$
with $m+n>0$. Roughly speaking, an affine process with state space
$D$ is a time-homogeneous Markov process $(X_{t})_{t\geqslant0}$
taking values in $D$, whose $\log$-characteristic function depends
in an affine way on the initial value of the process, that is, there
exist functions $\phi$, $\psi=(\psi_{1},\ldots,\psi_{m+n})$ such
that
\[
\mathbb{E}\left[\left.\mathrm{e}^{\langle u,X_{t}\rangle}\thinspace\right|\thinspace X_{0}=x\right]=\mathrm{e}^{\phi(t,u)+\langle\psi(t,u),x\rangle},
\]
for all $u\in\mathrm{i}\mathbb{R}^{m+n}$, $t\geqslant0$ and $x\in D$.
The general theory of affine processes was initiated by Duffie, Pan
and Singleton \cite{MR1793362} and further developed by Duffie, Filipovi{\'c},
and Schachermayer \cite{MR1994043}. In the seminal work of Duffie
\textit{et al.} \cite{MR1994043}, several fundamental properties
of affine processes on the canonical state space $D$ were established.
In particular, the generator of $D$-valued affine processes is completely
characterized through a set of \textit{admissible parameters, }and
the associated \textit{generalized Riccati equations} for $\phi$
and $\psi$ are introduced and studied.\textit{ } The results of
\cite{MR1994043} were further complemented by many subsequent developments,
see, e.g., \cite{MR2284011,MR3340375,MR3540486,MR2243880,MR2648460,MR2931348,MR3313754,MR2851694}.

Affine processes have found a wide range of applications in finance,
mainly due to their computational tractability and modeling flexibility.
Many popular models in finance, such as the models of Cox \textit{et
al.} \cite{MR785475}, Heston \cite{Heston} and Vasicek \cite{MR3235239},
are of affine type. Moreover, from the theoretical point of view,
the concept of affine processes enables a unified treatment of two
very important classes of continuous-time Markov processes: OU-type
processes on $\mathbb{R}^{n}$ and CBI (continuous-state branching
processes with immigration) processes on $\mathbb{R}_{\geqslant0}^{m}$.

In this paper, we are concerned with the following question: when
does an affine process converge in law to a limit distribution? This
problem has already been dealt with in the following situations:
\begin{itemize}
\item Sato and Yamazato \cite{MR738769} provided conditions under which
an OU-type process on $\mathbb{R}^{n}$ converges in law to a limit
distribution, and they identified this type of limit distributions
with the class of operator self-decomposable distributions of Urbanik
\cite{MR0245068};
\item without a proof, Pinsky \cite{MR0295450} announced the existence
of a limit distribution for one-dimensional CBI processes, under a
mean-reverting condition and the existence of the log-moment of the
Lévy measure from the immigration mechanism. A recent proof appeared
in \cite[Theorem 3.20 and Corollary 3.21]{MR2760602} (see also \cite[Theorem 3.16]{MR2779872}).
A stronger form of this result can be found in \cite[Theorem 2.6]{MR2922631};
\item Glasserman and Kim \cite{MR2599675} proved that  affine diffusion
processes on $\mathbb{R}_{\geqslant0}^{m}\times\mathbb{R}^{n}$ introduced
by Dai and Singleton \cite{10.2307/222481} have limiting stationary
distributions and characterized these limits;
\item Barczy, Döring, Li, and Pap \cite{MR3254346} showed stationarity
of an affine two-factor model on $\mathbb{R}_{\geqslant0}\times\mathbb{R}$,
with one component being the $\alpha$-root process.
\end{itemize}
Our motivation for this paper is twofold. On the one hand, we would
like to formulate a general result for affine processes with state
space $D=\mathbb{R}_{\geqslant0}^{m}\times\mathbb{R}^{n}$, which
unifies the above mentioned results; on the other hand, our result
should also provide new results for the unsolved cases where $D=\mathbb{R}_{\geqslant0}^{m}$
($m\geqslant2$) and $D=\mathbb{R}_{\geqslant0}^{m}\times\mathbb{R}^{n}$
($m\geqslant1,n\geqslant1$). As our main result (see Theorem \ref{thm:stationarity of affine processes}
below), we give sufficient conditions such that an affine process
$X$ with state space $D=\mathbb{R}_{\geqslant0}^{m}\times\mathbb{R}^{n}$
converges in law to a limit distribution as time goes to infinity,
and we also identify this limit through its characteristic function.
Using a similar argument as in \cite{MR2779872}, we will show that
the limit distribution is the unique stationary distribution for $X$.
\\

The rest of this paper is organized as follows. In Section 2 we recall
some definitions regarding affine processes and present our main theorem,
whose proof we defer to Section 4. In Section 3 we deal with the large
time behavior of the function $\psi$ and show that $\psi(t,u)$ converges
exponentially fast to $0$ as $t$ goes to infinity. Finally, we prove
our main theorem in Section 4.

\section{Preliminaries and main result}

\subsection{Notation}

Let $\mathbb{N}$, $\mathbb{Z}_{\geqslant0}$, $\mathbb{R}$ denote
the sets of positive integers, non-negative integers and real numbers,
respectively. Let $\mathbb{R}^{d}$ be the $d$-dimensional ($d\geqslant1$)
Euclidean space and define
\[
\mathbb{R}_{\geqslant0}^{d}:=\left\{ x\in\mathbb{R}^{d}\thinspace:\thinspace x_{i}\geqslant0,\ i=1,\ldots,d\right\}
\]
and
\[
\mathbb{R}_{>0}^{d}:=\left\{ x\in\mathbb{R}^{d}\thinspace:\thinspace x_{i}>0,\ i=1,\ldots,d\right\} .
\]
For $x,\thinspace y\in\mathbb{R}$, we write $x\wedge y:=\min\{x,y\}$.
By $\langle\cdot,\cdot\rangle$ and $\Vert x\Vert$ we denote the
inner product on $\mathbb{R}^{d}$ and the induced Euclidean norm
of a vector $x\in\mathbb{R}^{d}$, respectively. For a $d\times d$-matrix
$A=(a_{ij})$, we write $A^{\top}$ for the transpose of $A$ and
define $\|A\|:=(\mathrm{trace}(A^{\top}A))^{1/2}$. Let $\mathbb{C}^{d}$
be the space that consists of $d$-tuples of complex numbers. We define
the following subsets of $\mathbb{C}^{d}$:
\[
\mathbb{C}_{\leqslant0}^{d}:=\left\{ u\in\mathbb{C}^{d}\thinspace:\thinspace\mathrm{Re}\thinspace u_{i}\leqslant0,\ i=1,\ldots,d\right\}
\]
 and
\[
\mathrm{i}\mathbb{R}^{d}:=\left\{ u\in\mathbb{C}^{d}\thinspace:\thinspace\mathrm{Re}\,u_{i}=0,\ i=1,\ldots,d\right\} .
\]

The following sets of matrices are of particular importance in this
work :
\begin{itemize}
\item $\mathbb{M}_{d}^{-}$ which stands for the set of real $d\times d$
matrices all of whose eigenvalues have strictly negative real parts.
Note that $A\in\mathbb{M}_{d}^{-}$ if and only if $\Vert\exp\left\{ tA\right\} \Vert\to0$
as $t\to\infty$;
\item $\mathbb{S}_{d}^{+}$ (resp. $\mathbb{S}_{d}^{++}$) which stands
for the set of all symmetric and positive semidefinite (resp. positive
definite) real $d\times d$ matrices.
\end{itemize}
If $A=(a_{ij})$ is a $d\times d$-matrix, $b=(b_{1},\ldots,b_{d})\in\mathbb{R}^{d}$
and $\mathcal{I},\thinspace\mathcal{J}\subset\{1,\ldots,d\}$, we
write $A_{\mathcal{I}\mathcal{J}}:=(a_{ij})_{i\in\mathcal{I},j\in\mathcal{J}}$
and $b_{\mathcal{I}}:=(b_{i})_{i\in\mathcal{I}}$.

Let $U$ be an open set or the closure of an open set in $\mathbb{R}^{d}$.
We introduce the following function spaces: $C^{k}(U)$, $C_{c}^{k}(U)$,
and $C^{\infty}(U)$ which denote the sets of\textcolor{red}{{} $\mathbb{{\normalcolor C}}$}-valued
functions on $U$ that are $k$-times continuously differentiable,
that are $k$-times continuously differentiable with compact support,
and that are smooth, respectively. The Borel $\sigma$-Algebra on
$U$ will be denoted by $\mathcal{B}(U)$.

Throughout the rest of this paper, let $D:=\mathbb{R}_{\geqslant0}^{m}\times\mathbb{R}^{n}$,
where $m,\thinspace n\in\mathbb{Z}_{\geqslant0}$ with $m+n>0$. Note
that $m$ or $n$ may be $0$. The set $D$ will act as the state
space of affine processes we are about to consider. The total dimension
of $D$ is denoted by $d=m+n$. We write $\mathcal{B}_{b}(D)$ for
the Banach space of bounded real-valued Borel measurable functions
$f$ on $D$ with norm $\Vert f\Vert_{\infty}:=\sup_{x\in D}|f(x)|$.

For $D$, we write
\[
I=\{1,\ldots,m\}\quad\text{and}\quad J=\{m+1,\ldots,m+n\}
\]
for the index sets of the $\mathbb{R}_{\geqslant0}^{m}$-valued components
and the $\mathbb{R}^{n}$-valued components, respectively. Define
\[
\mathcal{U}:=\mathbb{C}_{\leqslant0}^{m}\times\mathrm{i}\mathbb{R}^{n}=\left\{ u\in\mathbb{C}^{d}\,:\,\mathrm{Re}\,u_{I}\leqslant0,\quad\mathrm{Re}\,u_{J}=0\right\} .
\]
Note that $\mathcal{U}$ is the set of all $u\in\mathbb{C}^{d}$,
for which $x\mapsto\exp\left\{ \langle u,x\rangle\right\} $ is a
bounded function on $D$.

Further notation is introduced in the text.

\subsection{Affine processes on the canonical state space}

Affine processes on the canonical state space $D=\mathbb{R}_{\geqslant0}^{m}\times\mathbb{R}^{n}$
have been systematically studied in the well-known work \cite{MR1994043}.
We remark that affine processes considered in \cite{MR1994043} are
in full generality and are allowed to have explosions or killings.
In contrast to \cite{MR1994043}, in this paper we restrict ourselves
to \emph{conservative affine processes}. In terms of terminology and
notation, we mainly follow, instead of \cite{MR1994043}, the paper
by Keller-Ressel and Mayerhofer \cite{MR3313754}, where only the
conservative case was considered. \\

Let us start with a time-homogeneous and conservative Markov process
with state space $D$ and semigroup $(P_{t})$ acting on $\mathcal{B}_{b}(D)$,
that is,
\[
P_{t}f(x)=\int_{D}f(\xi)p_{t}(x,d\xi),\quad f\in\mathcal{B}_{b}(D).
\]
Here $p_{t}(x,\cdot)$ denotes the transition kernel of the Markov
process. We assume that $p_{0}(x,\{x\})$=1 and $p_{t}(x,D)\text{=}1$
for all $t\geqslant0,$ $x\in D$.

Let $(X,(\mathbb{P}_{x})_{x\in D})$ be the canonical realization
of $(P_{t})$  on $(\Omega,\mathcal{F},(\mathcal{F}_{t})_{t\geqslant0})$,
where $\Omega$ is the set of all càdlàg paths in $D$ and $X_{t}(\omega)=\omega(t)$
for $\omega\in\Omega$. Here $(\mathcal{F}_{t})_{t\geqslant0}$ is
the filtration generated by $X$ and $\mathcal{F}=\bigvee_{t\geqslant0}\mathcal{F}_{t}$.
The probability measure $\mathbb{P}_{x}$ on $\Omega$ represents
the law of the Markov process $(X_{t})_{t\geqslant0}$ started at
$x$, i.e., it holds that $X_{0}=x$, $\mathbb{P}_{x}$-almost surely.
The following definition is taken from \cite[Definition 2.2]{MR3313754}.
\begin{defn}
\label{def:(Affine-process)} The Markov process $X$ is called \textit{affine}
with state space $D$, if its transition kernel $p_{t}(x,A)=\mathbb{P}_{x}(X_{t}\text{\ensuremath{\in}}A)$
satisfies the following:

(i) it is stochastically continuous, that is, $\lim_{s\to t}{\displaystyle p_{s}(x,\cdot)=p_{t}(x,\cdot)}$
weakly for all $t\geqslant0,\;x\in D$, and

(ii) there exist functions $\phi:\mathbb{R}_{\geqslant0}\times\mathcal{U}\rightarrow\mathbb{C}$
and $\psi:\mathbb{R}_{\geqslant0}\times\mathcal{U}\rightarrow\mathbb{C}^{d}$
such that
\begin{equation}
{\displaystyle \int_{D}\mathrm{e}^{\langle u,\xi\rangle}p_{t}(x,\mathrm{d}\xi)=\mathbb{E}_{x}\left[\mathrm{e}^{\langle X_{t},u\rangle}\right]=\exp\left\{ \phi(t,u)+\langle x,\psi(t,u)\rangle\right\} }\label{eq:affine representation}
\end{equation}
for all $t\geqslant0,\;x\in D$ and $u\in\mathcal{U}$, where $\mathbb{E}_{x}$
denotes the expectation with respect to $\mathbb{P}_{x}$.

\end{defn}

The stochastic continuity in (i) and the affine property in (ii) together
imply the following regularity of the functions $\phi$ and $\psi$
(see \cite[Theorem 5.1]{MR2851694}), i.e., the right-hand derivatives
\begin{equation}
F(u):=\left.\frac{\partial}{\partial t}\phi(t,u)\right\vert _{t=0+}\quad\text{and}\quad R(u):=\left.\frac{\partial}{\partial t}\psi(t,u)\right\vert _{t=0+}\label{eq: defi F and R}
\end{equation}
exist for all $u\in\mathcal{U}$, and are continuous at $u=0$. Moreover,
according to \cite[Proposition 7.4]{MR1994043}, the functions $\phi$
and $\psi$ satisfy the \textit{semi-flow property}:
\begin{align}
\phi(t+s,u) & =\phi(t,u)+\phi\left(s,\psi(t,u)\right)\quad\text{and}\quad\psi(t+s,u)=\psi\left(s,\psi(t,u)\right),\label{eq:semi flow property}
\end{align}
for all $t,\thinspace s\geqslant0$ with $(t+s,u)\in\mathbb{R}_{\geqslant0}\times\mathcal{U}$.
\begin{defn}
\label{def:admissible parameters}We call $(a,\alpha,b,\beta,m,\mu)$
a \emph{set of admissible parameters} for the state space $D$ if

(i) $a\in\mathbb{S}_{d}^{+}$ and $a_{kl}=0$ for all $k\in I$ or
$l\in I$;

(ii) $\alpha=(\alpha_{1},\ldots,\alpha_{m})$ with $\alpha_{i}=(\alpha_{i,kl})_{1\leqslant k,l\leqslant d}\in\mathbb{S}_{d}^{+}$

\begin{flushright} and $\alpha_{i,kl}=0$ if $k\in I\backslash\{i\}$ or $l\in I\backslash\{i\}$;\end{flushright}

(iii) $m$ is a Borel measure on $D\backslash\{0\}$ satisfying

\[
\int_{D\backslash\{0\}}\left(1\wedge\left\Vert \xi\right\Vert ^{2}+\sum_{i\in I}\left(1\wedge\xi_{i}\right)\right)m(\mathrm{d}\xi)<\infty;
\]

(iv) $\mu=(\mu_{1},\ldots,\mu_{m})$ where every $\mu_{i}$ is a Borel
measure on $D\backslash\{0\}$ satisfying
\begin{equation}
\int_{D\backslash\{0\}}\left(\left\Vert \xi\right\Vert \wedge\left\Vert \xi\right\Vert ^{2}+\sum_{k\in I\backslash\{i\}}\xi_{k}\right)\mu_{i}\left(\mathrm{d}\xi\right)<\infty.\label{cond: conservative}
\end{equation}

(v) $b\in D$;

(vi) $\beta=(\beta_{ki})\in\mathbb{R}^{d\times d}$ with $\beta_{ki}-\int_{D\backslash\{0\}}\xi_{k}\mu_{i}(\mathrm{d}\xi)\geqslant0$
for all $i\in I$ and $k\in I\setminus\{i\}$,

\begin{flushright} and $\beta_{ki}=0$ for all  $k\in I$ and $i\in J$;\end{flushright}
\end{defn}

We remark that our definition of admissible parameters is a special
case of \cite[Definition 2.6]{MR1994043}, since we require here that
the parameters corresponding to killing are constant $0$; moreover,
the condition in (iv) is also stronger as usual, i.e., we assume that
the first moment of $\mu_{i}$'s exists, which, by \cite[Lemma 9.2]{MR1994043},
implies that the affine process under consideration is conservative.
However, we should remind the reader that \eqref{cond: conservative}
is not a necessary condition for conservativeness. In fact, an example
of a conservative affine process on $\mathbb{R}_{\geqslant0}$, which
violates \eqref{cond: conservative}, is provided in \cite[Section 3]{MR2763096}.
\\

We write $\psi=(\psi^{I},\psi^{J})\in\mathbb{C}^{m}\times\mathbb{C}^{n}$,
where $\psi^{I}=(\psi_{1},\ldots,\psi_{m})^{\top}$ and $\psi^{J}=(\psi_{m+1},\ldots,\psi_{m+n})^{\top}$.
Recall that $R=(R_{1},\ldots,R_{d})^{\top}:\mathcal{U}\to\mathbb{C}^{d}$
is given in (\ref{eq: defi F and R}). Define $R^{I}:=(R_{1},\ldots,R_{m})^{\top}:\mathcal{U}\to\mathbb{C}^{m}$.
For $u\in\mathcal{U}$, we will often write $u=(v,w)\in\mathbb{C}_{\leqslant0}^{m}\times\mathrm{i}\mathbb{R}^{n}$.\\

The next result is due to \cite[Theorem 2.7]{MR1994043}.
\begin{thm}
\label{thm:characterization of affine processes} Let $(a,\alpha,b,\beta,m,\mu)$
be a set of admissible parameters in the sense of Definition \ref{def:admissible parameters}.
Then there exists a (unique) conservative affine process $X$ with
state space $D$ such that its infinitesimal generator $\mathcal{A}$
operating on a function \textup{$f\in C_{c}^{2}(D)$ is given by}
\begin{align*}
\mathcal{A}f(x) & =\sum_{k,l=1}^{d}\left(a_{kl}+\sum_{i=1}^{m}\alpha_{i,kl}x_{i}\right)\frac{\partial^{2}f(x)}{\partial x_{k}\partial x_{l}}+\langle b+\beta x,\nabla f(x)\rangle\\
 & \quad+\int_{D\backslash\{0\}}\left(f\left(x+\xi\right)-f(x)-\langle\nabla_{J}f(x),\xi_{J}\rangle\mathbbm{1}_{\lbrace\Vert\xi\Vert\leqslant1\rbrace}\left(\xi\right)\right)m\left(\mathrm{d}\xi\right)\\
 & \quad+\sum_{i=1}^{m}x_{i}\int_{D\backslash\{0\}}\left(f\left(x+\xi\right)-f\left(x\right)-\langle\nabla f(x),\xi\rangle\right)\mu_{i}\left(\mathrm{d}\xi\right)
\end{align*}
where $x\in D$, $\nabla_{J}:=(\partial_{x_{k}})_{k\in J}$. Moreover,
\eqref{eq:affine representation} holds for some functions $\phi(t,u)$
and $\psi(t,u)$ that are uniquely determined by the generalized Riccati
differential equations: for each $u=(v,w)\in\mathbb{C}_{\leqslant0}^{m}\times\mathrm{i}\mathbb{R}^{n}$,
\begin{align}
\partial_{t}\phi(t,u) & =F\left(\psi(t,u)\right),\quad\phi(0,u)=0,\nonumber \\
\partial_{t}\psi^{I}(t,u) & =R^{I}\left(\psi^{I}\left(t,u\right),\mathrm{e}^{\beta_{JJ}^{\top}t}w\right),\quad\psi^{I}\left(0,u\right)=v\label{eq:riccati equation for psi ell}\\
\psi^{J}(t,u) & =\mathrm{e}^{\beta_{JJ}^{\top}t}w,\label{eq:riccati equation for psi J}
\end{align}
where
\begin{align}
F(u) & =\langle u,au\rangle+\langle b,u\rangle+\int_{D\backslash\{0\}}\left(\mathrm{e}^{\langle u,\xi\rangle}-1-\langle u_{J},\xi_{J}\rangle\mathbbm{1}_{\lbrace\Vert\xi\Vert\leqslant1\rbrace}\left(\xi\right)\right)m\left(\mathrm{d}\xi\right)\label{eq:representation of F(u)}
\end{align}
and $R^{I}=(R_{1},\ldots,R_{m})$ with
\[
R_{i}(u)=\langle u,\alpha_{i}u\rangle+\sum_{k=1}^{d}\beta_{ki}u_{k}+\int_{D\backslash\{0\}}\left(\mathrm{e}^{\langle u,\xi\rangle}-1-\langle u,\xi\rangle\right)\mu_{i}\left(\mathrm{d}\xi\right),\quad i\in I.
\]
\end{thm}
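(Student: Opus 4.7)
The plan is to follow the Duffie--Filipović--Schachermayer program in three steps: read off the formal coefficients $F$ and $R$ by letting $\mathcal{A}$ act on exponentials, promote these to an evolution equation for $(\phi,\psi)$ via the semi-flow property, and then construct the Markov process from the Riccati solutions.

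For $u\in\mathcal{U}$ set $f_u(x):=\mathrm{e}^{\langle u,x\rangle}$. Even though $f_u\notin C_c^2(D)$, I would justify applying the differential expression for $\mathcal{A}$ to $f_u$ by approximating with cut-off exponentials in $C_c^2(D)$ and then obtain
\[
\mathcal{A} f_u(x) \;=\; \Bigl(F(u) + \sum_{k=1}^{d} R_k(u)\,x_k\Bigr)f_u(x),
\]
where $F$ is precisely the expression stated in the theorem, and the admissibility conditions force a block structure on $R$: for $i\in I$ one recovers $R_i(u)=\langle u,\alpha_i u\rangle+\sum_{k}\beta_{ki}u_k+\int_{D\setminus\{0\}}(\mathrm{e}^{\langle u,\xi\rangle}-1-\langle u,\xi\rangle)\mu_i(\mathrm{d}\xi)$, while for $j\in J$ the quadratic, purely nonlinear, and mixed $I$-drift contributions all vanish (because $a_{kl}=0$ off the $JJ$-block, $\beta_{ki}=0$ for $k\in I$, $i\in J$, and no $\alpha_j,\mu_j$ are attached to $j\in J$), leaving $R_j(u)=(\beta_{JJ}^{\top}u_J)_j$. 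Next, differentiating the semi-flow identity $\psi(t+s,u)=\psi(s,\psi(t,u))$ in $s$ at $s=0$ using \eqref{eq: defi F and R} yields $\partial_t\psi(t,u)=R(\psi(t,u))$ with $\psi(0,u)=u$, and analogously $\partial_t\phi(t,u)=F(\psi(t,u))$ with $\phi(0,u)=0$. The $J$-block of this system is linear and decoupled, so $\psi^J(t,u)=\mathrm{e}^{\beta_{JJ}^{\top}t}w$; plugging this back into the $I$-block produces \eqref{eq:riccati equation for psi ell}. Local Lipschitz continuity of $R^I$ on bounded subsets of $\mathcal{U}$, together with invariance of $\mathcal{U}$ under the flow (verified in \cite{MR1994043}), yields a unique global solution.

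The main obstacle is the existence half: given admissible parameters, one must \emph{construct} a conservative Markov process with generator $\mathcal{A}$ and show that its characteristic function is driven by the Riccati solutions just described. The DFS route, which I would follow, is to (a) solve the Riccati system on $\mathcal{U}$; (b) show that for each $(t,x)$ the function $u\mapsto\exp\{\phi(t,u)+\langle x,\psi(t,u)\rangle\}$ is the characteristic function of a probability measure $p_t(x,\cdot)$ supported in $D$ --- the delicate step being positive-definiteness, handled via a Lévy--Khintchine decomposition of $\phi(t,\cdot)$ and $\psi(t,\cdot)$ combined with an approximation of the admissible parameters by ones for which the associated process is easy to construct (e.g.\ bounded jump activity, so that a pathwise jump-diffusion construction applies); (c) verify Chapman--Kolmogorov from \eqref{eq:semi flow property} and invoke Kolmogorov's extension to produce a Markov process; and (d) identify $\mathcal{A}$ as its infinitesimal generator on the core $C_c^2(D)$. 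Conservativeness is forced by the first-moment assumption \eqref{cond: conservative} through the argument of \cite[Lemma 9.2]{MR1994043}, and uniqueness of the process given the generator follows from the fact that $C_c^2(D)$ is a core, equivalently from well-posedness of the associated martingale problem.
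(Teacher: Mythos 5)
The paper offers no proof of this theorem: it is taken directly from Duffie--Filipovi\'c--Schachermayer \cite[Theorem 2.7]{MR1994043} (specialized to zero killing and to jump measures $\mu_{i}$ with first moments, which is what turns the truncated compensator into the full $\langle\nabla f(x),\xi\rangle$ and forces conservativeness), and your outline is exactly a reconstruction of their proof strategy. Your sketch is accurate as far as it goes, with the caveat that the genuinely hard analytic content --- global non-explosion of the Riccati flow (local Lipschitzness alone does not give it; one needs the a priori bound of \cite[Proposition 6.1]{MR1994043}), the positive-definiteness of $u\mapsto\exp\{\phi(t,u)+\langle x,\psi(t,u)\rangle\}$ in your step (b), and the identification of the generator on $C_{c}^{2}(D)$ in step (d) --- is named rather than carried out, precisely as the paper itself defers all of it to the citation.
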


\begin{rem}
If an affine process $X$ with state space $D$ and a set of admissible
parameters $(a,\alpha,b,\beta,m,\mu)$ satisfy a relation as in Theorem
\ref{thm:characterization of affine processes}, then we say that
$X$ is an affine process with admissible parameters $(a,\alpha,b,\beta,m,\mu)$.
\end{rem}

The following lemma is a consequence of the condition (iv) in Definition
\ref{def:admissible parameters}.
\begin{lem}
\label{lem:space-differentiability of F R phi psi}Let $X$ be an
affine process with state space $D$ and admissible parameters $(a,\alpha,b,\beta,m,\mu)$.
Let $R$ and $\psi$ be as in Theorem \ref{thm:characterization of affine processes}.
For each $i\in I$ it holds that $R_{i}\in C^{1}(\mathcal{U})$ and
$\psi_{i}\in C^{1}(\mathbb{R}_{\geqslant0}\times\mathcal{U})$.
\end{lem}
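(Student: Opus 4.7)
The plan is to treat the two statements in turn: first showing $R_i\in C^{1}(\mathcal{U})$ by differentiating under the integral sign, and then deducing $\psi_i\in C^{1}(\mathbb{R}_{\geqslant 0}\times\mathcal{U})$ from standard smooth-dependence results for ODEs applied to the Riccati system \eqref{eq:riccati equation for psi ell}. Throughout, I identify $\mathcal{U}$ with its image under real coordinates, so that $C^{1}$ refers to continuous differentiability with respect to $\mathrm{Re}\,u_{k}$ ($k\in I$) and $\mathrm{Im}\,u_{k}$ ($k\in I\cup J$), with one-sided derivatives on the boundary $\{\mathrm{Re}\,u_{k}=0\}$ for $k\in I$.

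Write $R_i(u)=P_i(u)+G_i(u)$ where $P_i(u)=\langle u,\alpha_iu\rangle+\sum_{k=1}^{d}\beta_{ki}u_k$ is polynomial, hence $C^{\infty}$, and
\[
G_i(u)=\int_{D\setminus\{0\}}\bigl(\mathrm{e}^{\langle u,\xi\rangle}-1-\langle u,\xi\rangle\bigr)\mu_i(\mathrm{d}\xi).
\]
Formally differentiating the integrand with respect to any of the real coordinates produces an expression of the form $c\cdot\xi_k(\mathrm{e}^{\langle u,\xi\rangle}-1)$ with $c\in\{1,\mathrm{i}\}$. To dominate this locally uniformly, I would use that for $u\in\mathcal{U}$ and $\xi\in D$ one has $\mathrm{Re}\langle u,\xi\rangle\leqslant 0$, which together with the elementary bound $|\mathrm{e}^{z}-1|\leqslant|z|$ valid for $\mathrm{Re}\,z\leqslant 0$ yields
\[
\bigl|\xi_k(\mathrm{e}^{\langle u,\xi\rangle}-1)\bigr|\leqslant\min\bigl\{2\|\xi\|,\,\|u\|\cdot\|\xi\|^{2}\bigr\}\leqslant C_{K}\bigl(\|\xi\|\wedge\|\xi\|^{2}\bigr)
\]
for all $u$ in a compact $K\subset\mathcal{U}$. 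By admissibility condition (iv) this bound is $\mu_i$-integrable, so the dominated convergence theorem lets me interchange differentiation and integration. Continuity of the resulting partial derivatives in $u$ follows from a second application of dominated convergence. This gives $G_i\in C^{1}(\mathcal{U})$ and therefore $R_i\in C^{1}(\mathcal{U})$.

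For $\psi_i$ with $i\in I$, I would invoke \eqref{eq:riccati equation for psi ell}, namely
\[
\partial_t\psi^{I}(t,u)=R^{I}\bigl(\psi^{I}(t,u),\mathrm{e}^{\beta_{JJ}^{\top}t}w\bigr),\qquad\psi^{I}(0,u)=v,
\]
viewed as a non-autonomous ODE on $\mathbb{C}^{m}_{\leqslant 0}$ parametrised by $w\in\mathrm{i}\mathbb{R}^{n}$ and $v\in\mathbb{C}^{m}_{\leqslant 0}$. The right-hand side is jointly $C^{1}$ in $(t,\psi^{I},v,w)$: in the state variable by the first step, in $w$ and $t$ through the smooth map $(t,w)\mapsto\mathrm{e}^{\beta_{JJ}^{\top}t}w$. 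Global existence on $[0,\infty)$ is provided by Theorem \ref{thm:characterization of affine processes} (conservativeness). The classical theorem on $C^{1}$ dependence of ODE flows on initial data and parameters (e.g.\ Hartman, Chap.\ V) then gives $\psi^{I}\in C^{1}(\mathbb{R}_{\geqslant 0}\times\mathcal{U})$; combined with the explicit linear formula \eqref{eq:riccati equation for psi J} for $\psi^{J}$, the claim follows.

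The only genuinely nontrivial step is the dominating estimate for the integrand of $\partial G_i$. The care lies in separating the small-$\xi$ regime (where the factor $\mathrm{e}^{\langle u,\xi\rangle}-1$ is of order $\|u\|\|\xi\|$, giving an $\|\xi\|^{2}$ bound) from the large-$\xi$ regime (where the exponential is merely bounded by $1$, and one falls back on $\|\xi\|$), and recognising that admissibility (iv) was imposed precisely to make $\|\xi\|\wedge\|\xi\|^{2}$ integrable against $\mu_i$. Everything else is bookkeeping plus a black-box appeal to ODE theory.
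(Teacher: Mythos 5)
Your proposal is correct and takes essentially the same route as the paper, whose entire proof is the citation of Lemmas 5.3 and 6.5 of Duffie--Filipovi\'c--Schachermayer: differentiation under the integral sign justified by the first-moment condition (iv) on $\mu_i$ to get $R_i\in C^{1}(\mathcal{U})$, followed by $C^{1}$-dependence of the generalized Riccati flow on initial data and parameters to get $\psi_i\in C^{1}(\mathbb{R}_{\geqslant0}\times\mathcal{U})$. You have in effect reproved the cited lemmas, including the correct domination $\min\{2\Vert\xi\Vert,\Vert u\Vert\,\Vert\xi\Vert^{2}\}$ coming from $\mathrm{Re}\,\langle u,\xi\rangle\leqslant0$.
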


To see that Lemma \ref{lem:space-differentiability of F R phi psi}
is true, we only need to apply Lemmas 5.3 and 6.5 of \cite{MR1994043}.

\subsection{Main result}

Our main result of this paper is the following.
\begin{thm}
\label{thm:stationarity of affine processes}Let $X$ be an affine
process with state space $\mathbb{R}_{\geqslant0}^{m}\times\mathbb{R}^{n}$
and admissible parameters $(a,\alpha,b,\beta,m,\mu)$ in the sense
of Definition \ref{def:admissible parameters}. If
\[
\beta\in\mathbb{M}_{d}^{-}\quad\text{and}\quad\int_{\left\{ \left\Vert \xi\right\Vert >1\right\} }\log\left\Vert \xi\right\Vert m\left(\mathrm{d}\xi\right)<\infty,
\]
then the law of $X_{t}$ converges weakly to a limiting distribution
$\pi$, which is independent of $X_{0}$ and whose characteristic
function is given by
\[
\int_{D}\mathrm{e}^{\langle u,x\rangle}\pi\left(\mathrm{d}x\right)=\exp\left\{ \int_{0}^{\infty}F\left(\psi(s,u)\right)\mathrm{d}s\right\} ,\quad\text{ }u\in\mathcal{U}.
\]
Moreover, the limiting distribution $\pi$ is the unique stationary
distribution for $X$.
\end{thm}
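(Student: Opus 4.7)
The strategy is to apply L\'evy's continuity theorem to the characteristic functions of $X_{t}$ under $\mathbb{P}_{x}$. By \eqref{eq:affine representation}, for every $u\in\mathrm{i}\mathbb{R}^{d}$,
\[
\mathbb{E}_{x}\bigl[\mathrm{e}^{\langle u,X_{t}\rangle}\bigr]=\exp\bigl\{\phi(t,u)+\langle x,\psi(t,u)\rangle\bigr\},
\]
so weak convergence of $X_{t}$ will follow from three facts: (a) $\psi(t,u)\to 0$ as $t\to\infty$, erasing the dependence on $x$; (b) $\phi(t,u)=\int_{0}^{t}F(\psi(s,u))\,\mathrm{d}s$ converges to a finite complex number as $t\to\infty$; and (c) the resulting limit function of $u$ is continuous at $0$. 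Fact (a) is the content of Section~3, where the hypothesis $\beta\in\mathbb{M}_{d}^{-}$ and the stability theory for the Riccati system \eqref{eq:riccati equation for psi ell}--\eqref{eq:riccati equation for psi J} are used to produce an exponential decay bound $\|\psi(s,u)\|\leqslant C(u)\,\mathrm{e}^{-\gamma s}$, with $C(u)$ locally bounded in $u$.

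The crux of the proof is (b). In the representation \eqref{eq:representation of F(u)} of $F$, the diffusion and drift parts $\langle\psi,a\psi\rangle+\langle b,\psi\rangle$ are integrable in $s$ by the exponential decay of $\psi$. The delicate piece is
\[
I(s,u):=\int_{D\setminus\{0\}}\Bigl(\mathrm{e}^{\langle\psi(s,u),\xi\rangle}-1-\langle\psi_{J}(s,u),\xi_{J}\rangle\mathbbm{1}_{\{\|\xi\|\leqslant 1\}}\Bigr)\,m(\mathrm{d}\xi),
\]
which I would control by splitting the $\xi$-integration into three pieces. On $\{\|\xi\|\leqslant 1\}$, a Taylor expansion (using $\mathrm{Re}\,\psi_{I}(s,u)\leqslant 0$ and $\xi_{I}\geqslant 0$) bounds the integrand by $C\|\psi(s,u)\|^{2}(1\wedge\|\xi\|^{2})+C\|\psi(s,u)\|\cdot(1\wedge\xi_{I})$, which is $m$-integrable by admissibility~(iii) and decays exponentially in $s$. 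On the middle region $\{1<\|\xi\|\leqslant\|\psi(s,u)\|^{-1}\}$ I would use $|\mathrm{e}^{\langle\psi,\xi\rangle}-1|\leqslant C\|\psi(s,u)\|\|\xi\|$; Fubini--Tonelli combined with the exponential decay of $\psi$ then gives an integral bounded by $Cm(\{\|\xi\|>1\})<\infty$. On $\{\|\xi\|>\|\psi(s,u)\|^{-1}\}$ the trivial bound $|\mathrm{e}^{\langle\psi,\xi\rangle}-1|\leqslant 2$ and the inequality $\|\psi(s,u)\|^{-1}\gtrsim\mathrm{e}^{\gamma s}$ yield
\[
\int_{0}^{\infty}m\bigl(\bigl\{\|\xi\|>\|\psi(s,u)\|^{-1}\bigr\}\bigr)\,\mathrm{d}s\;\leqslant\;\frac{1}{\gamma}\int_{\{\|\xi\|>1\}}\log\|\xi\|\,m(\mathrm{d}\xi)+O(1),
\]
finite precisely by the log-moment hypothesis. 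This last estimate is the principal obstacle of the proof and the unique place where the hypothesis is tight.

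Once (b) is established, uniform-on-compacts versions of the same bounds give (c), and L\'evy's continuity theorem produces a probability measure $\pi$ with the claimed characteristic function; $\pi$ is automatically independent of $x$ since $\psi(t,u)\to 0$. Stationarity of $\pi$ follows directly from the semi-flow property \eqref{eq:semi flow property}: the change of variables $s\mapsto s+t$ transforms $\phi(t,u)+\int_{0}^{\infty}F(\psi(s,\psi(t,u)))\,\mathrm{d}s$ into $\int_{0}^{\infty}F(\psi(s,u))\,\mathrm{d}s$, so $\pi P_{t}=\pi$. For uniqueness, following the idea in \cite{MR2779872}, any stationary measure $\pi'$ satisfies
\[
\int_{D}\mathrm{e}^{\langle u,x\rangle}\pi'(\mathrm{d}x)=\mathrm{e}^{\phi(t,u)}\int_{D}\mathrm{e}^{\langle x,\psi(t,u)\rangle}\pi'(\mathrm{d}x)\quad\text{for all }t\geqslant 0,
\]
and letting $t\to\infty$, using $\psi(t,u)\to 0$ and dominated convergence, forces the characteristic function of $\pi'$ to coincide with that of $\pi$, whence $\pi'=\pi$.
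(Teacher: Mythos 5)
Your overall route is the same as the paper's: exponential decay of $\psi(t,u)$ from the stability analysis of Section~3, integrability of $s\mapsto F(\psi(s,u))$ via a small-jump/large-jump decomposition (this is the paper's Lemma \ref{lem:finiteness of limit F(psi(s,u))}), uniformity of these bounds for $u$ near $0$ to get continuity at the origin, L\'evy's continuity theorem, stationarity from the semi-flow property, and uniqueness by letting $t\to\infty$ in the stationarity identity. Those parts are all fine.

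There is, however, one incorrect intermediate claim in your treatment of (b), in the middle region $\{1<\|\xi\|\leqslant\|\psi(s,u)\|^{-1}\}$. You assert that $|\mathrm{e}^{\langle\psi,\xi\rangle}-1|\leqslant C\|\psi(s,u)\|\,\|\xi\|$ together with Fubini--Tonelli and the decay $\|\psi(s,u)\|\leqslant C_{1}\mathrm{e}^{-\gamma s}$ yields a bound of the form $C\,m(\{\|\xi\|>1\})$. It does not: for fixed $\xi$ with $\|\xi\|>1$, all the information available gives
\[
\int_{\{s\,:\,\|\psi(s,u)\|\leqslant\|\xi\|^{-1}\}}\|\psi(s,u)\|\,\|\xi\|\,\mathrm{d}s\;\leqslant\;\int_{0}^{\infty}\min\bigl(1,\,C_{1}\mathrm{e}^{-\gamma s}\|\xi\|\bigr)\,\mathrm{d}s\;\asymp\;\gamma^{-1}\bigl(1+\log_{+}\|\xi\|\bigr),
\]
since the integrand stays of order one on the whole interval $0\leqslant s\lesssim\gamma^{-1}\log\|\xi\|$. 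So the middle region produces a $\log\|\xi\|$ term just like the far region, and your remark that the far region is ``the unique place where the hypothesis is tight'' is not accurate for your splitting. The error is harmless for the theorem, because the resulting bound $C\int_{\{\|\xi\|>1\}}(1+\log\|\xi\|)\,m(\mathrm{d}\xi)$ is finite by the very log-moment hypothesis you are assuming; but as written the step is false and should be corrected. (The paper avoids the issue by the substitution $t=\mathrm{e}^{-c_{2}s}\|\xi\|$, which treats both regions in one stroke: the range $t\in(1,\|\xi\|)$ contributes exactly the $\tfrac{2}{c_{2}}\log\|\xi\|$ term, and $t\in(0,1)$ contributes a constant.)
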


\begin{rem}
\label{rem: beta condition} In virtue of the definition of admissible
parameters, we can write $\beta\in\mathbb{R}^{d\times d}$ in the
following way:

\renewcommand*\arraystretch{1.4}\begin{equation}\label{eq: form of beta}
\beta=
\left( \begin{array}{c;{2pt/2pt}c}
\beta_{II} & 0 \\ \hdashline[2pt/2pt]
\beta_{JI} & \beta_{JJ}
\end{array}\right),
\end{equation}where $\beta_{II}\in\mathbb{R}^{m\times m}$, $\beta_{JI}\in\mathbb{R}^{n\times m}$
and $\beta_{JJ}\in\mathbb{R}^{n\times n}$. It is easy to see that
$\beta\in\mathbb{M}_{d}^{-}$ is equivalent to the fact that $\beta_{II}\in\mathbb{M}_{m}^{-}$
and $\beta_{JJ}\in\mathbb{M}_{n}^{-}$.\\
\end{rem}

We now make a few comments on Theorem \ref{thm:stationarity of affine processes}.
To our knowledge, Theorem \ref{thm:stationarity of affine processes}
seems to be the first result towards the existence of limiting distributions
for affine processes on $D$ in such a generality. It includes many
previous results as special cases. In particular, it covers \cite[Theorem 2.4]{MR2599675}
for affine diffusions, and partially extends \cite[Theorem 4.1]{MR738769}
for OU-type processes and \cite[Corollary 2]{MR0295450} for $1$-dimensional
CBI processes.\textcolor{red}{{} }However, we are not able to show $\int_{\left\{ \left\Vert \xi\right\Vert >1\right\} }\log\left\Vert \xi\right\Vert m\left(\mathrm{d}\xi\right)<\infty$,
provided that $\beta\in\mathbb{M}_{d}^{-}$ and the stationarity of
$X$ is known. \\

Our strategy of proving Theorem \ref{thm:stationarity of affine processes}
is as follows. Clearly, to prove the weak convergence of the distribution
of $X_{t}$ to $\pi$, it is essential to establish the pointwise
convergence of the corresponding characteristic functions, i.e.,
\[
\mathbb{E}_{x}\left[\mathrm{e}^{\langle X_{t},u\rangle}\right]=\exp\left\{ \phi(t,u)+\langle x,\psi(t,u)\rangle\right\} \to\exp\left\{ \int_{0}^{\infty}F(\psi(s,u))\mathrm{d}s\right\} \quad\text{as }t\to\infty.
\]
We will proceed in two steps. In the first step, we prove that for
each $u\in\mathcal{U}$, $\psi(t,u)$ converges to zero exponentially
fast. For $u$ in a small neighborhood of the origin, this convergence
follows by a fine analysis of the generalized Riccati equations \eqref{eq:riccati equation for psi ell},
\eqref{eq:representation of F(u)} and an application of the linearized
stability theorem for ODEs. Then, by some probabilistic arguments,
we show that $\psi(t,u)$ reaches every neighborhood of the origin
for large enough $t$. The essential observation here is the tightness
of the laws of $X_{t}$, $t\geqslant0$. This is a simple consequence
of the uniform boundedness for the first moment of $X_{t}$, $t\geqslant0$,
which we show in Proposition \ref{thm:uniform boundedness for the first moment of X_t}.
We thus obtain the desired convergence speed of $\psi(t,u)\to0$ by
the semi-flow property \eqref{eq:semi flow property}. In the second
step, we show that
\begin{equation}
\phi(t,u)=\int_{0}^{t}F(\psi(s,u))\mathrm{d}s\to\int_{0}^{\infty}F(\psi(s,u))\mathrm{d}s\quad\text{as }t\to\infty.\label{eq: conv. phi}
\end{equation}
Since $\psi(s,u)\to0$ exponentially fast as $s\to\infty$, we will
see that the convergence in \eqref{eq: conv. phi} is naturally connected
with the condition $\int_{\left\{ \left\Vert \xi\right\Vert >1\right\} }\log\left\Vert \xi\right\Vert m\left(\mathrm{d}\xi\right)<\infty$.
Finally, the stationarity of $\pi$ can be derived using the semi-flow
property.

\section{Large time behavior of the function $\psi(t,u)$ }

In this section we consider an affine process $X$ with admissible
parameters $(a,\alpha,b,\beta,m,\mu)$ and assume that
\begin{equation}
a=0,\ b=0,\ m=0.\label{eq: assum sect. 3}
\end{equation}
In particular, we have $F\equiv0$ as well as $\phi\equiv0$. We will
show that if $\beta\in\mathbb{M}_{d}^{-}$, then $\psi(t,u)\to0$
exponentially fast as $t\to\infty$.
\begin{rem}
\label{rem:The-assumption-that}The assumption that $a=0,\ b=0$ and
$m=0$ is not essential. Indeed, Proposition \ref{thm:exponential convergence of psi for all u},
as the main result of this section, remains true if we drop  Assumption
(\ref{eq: assum sect. 3}). This follows from the following observation:
when we study the properties of the function $\psi(t,u)$, the parameters
$a,\thinspace b$ and $m$ do not play a role.
\end{rem}

\subsection{Uniform boundedness for the first moment of $X_{t}$, $t\geqslant0$ }

The aim we pursue in this subsection is to establish the uniform boundedness
for the first moment of $X_{t}$, $t\geqslant0$. We start with some
approximations of $X$, which were introduced in \cite{MR3540486}.

For $K\in(1,\infty)$, let
\[
\mu_{K,i}(\mathrm{d}\xi):=\mathbbm{1}_{\{\left\Vert \xi\right\Vert \leqslant K\}}(\xi)\mu_{i}(\mathrm{d}\xi),
\]
and denote by $(X_{K,t})_{t\geqslant0}$ the affine process with admissible
parameters $(a=0,\alpha,b=0,\beta,m=0,\mu_{K})$, where $\mu_{K}=(\mu_{K,1},\ldots,\mu_{K,m})$.
Then we have
\[
\mathbb{E}_{x}\left[\mathrm{e}^{\langle X_{K,t},u\rangle}\right]=\exp\left\{ \langle x,\psi_{K}\left(t,u\right)\rangle\right\} ,\quad t\geqslant0,\;x\in D,\ u\in\mathcal{U},
\]
for some function $\psi_{K}:\mathbb{R}_{\geqslant0}\times\mathcal{U}\to\mathbb{C}^{d}$.
By \eqref{eq:riccati equation for psi ell} and \eqref{eq:riccati equation for psi J},
we know that $\psi_{K}=(\psi_{K}^{I},\psi^{J})$, where $\psi^{J}(t,u)=\exp(\beta_{JJ}^{\top}t)w$
for $u=(v,w)\in\mathbb{C}_{\leqslant0}^{m}\times\mathrm{i}\mathbb{R}^{n}$
and $\psi_{K}^{I}$ satisfies the generalized Riccati equation
\[
\partial_{t}\psi_{K}^{I}\left(t,u\right)=R_{K}^{I}\left(\psi_{K}^{I}\left(t,u\right),\mathrm{e}^{\beta_{JJ}^{\top}t}w\right),\quad\psi_{K}^{I}(0,u)=v\in\mathbb{C}_{\leqslant0}^{m},
\]
where $R_{K}^{I}=(R_{K,i},\ldots,R_{K,m})^{\top}$ with
\[
R_{K,i}(u)=\langle u,\alpha_{i}u\rangle+\sum_{k=1}^{d}\beta_{ki}u_{k}+\int_{D\backslash\{0\}}\left(\mathrm{e}^{\langle u,\xi\rangle}-1-\langle u,\xi\rangle\right)\mu_{K,i}\left(\mathrm{d}\xi\right),\quad i\in I.
\]

\begin{lem}
\label{lem:approximation lemma} F\textup{or each $t\in\mathbb{R}_{\geqslant0}$
and $u\in\mathcal{U}$, $\psi_{K}(t,u)$ converges to $\psi(t,u)$
as $K\to\infty$.}
\end{lem}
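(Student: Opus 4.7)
The plan is to reduce the lemma to continuous dependence of ODE solutions on the right-hand side. Since $\psi_{K}(t,u)=(\psi_{K}^{I}(t,u),\mathrm{e}^{\beta_{JJ}^{\top}t}w)$ and the $J$-component is identical for all $K$ (and also equals the $J$-component of $\psi(t,u)$), it suffices to prove $\psi_{K}^{I}(t,u)\to\psi^{I}(t,u)$ for each fixed $(t,u)\in\mathbb{R}_{\geqslant 0}\times\mathcal{U}$. Both $\psi^{I}(\cdot,u)$ and $\psi_{K}^{I}(\cdot,u)$ solve an initial value problem of the form
\[
\partial_{t}y(t)=R_{\bullet}^{I}\bigl(y(t),\mathrm{e}^{\beta_{JJ}^{\top}t}w\bigr),\quad y(0)=v,
\]
with $R_{\bullet}^{I}$ being $R^{I}$ or $R_{K}^{I}$ respectively, so the claim follows once I establish (a) uniform convergence of $R_{K}^{I}$ to $R^{I}$ on compact subsets of $\mathcal{U}$ and (b) a Gronwall-type stability estimate along the trajectories.

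For (a), the difference reads, for $i\in I$,
\[
R_{i}(u)-R_{K,i}(u)=\int_{\{\|\xi\|>K\}}\bigl(\mathrm{e}^{\langle u,\xi\rangle}-1-\langle u,\xi\rangle\bigr)\,\mu_{i}(\mathrm{d}\xi).
\]
Since $\mathrm{Re}\,\langle u,\xi\rangle\leqslant 0$ for every $u\in\mathcal{U}$ and $\xi\in D$, the integrand is bounded in absolute value by $2+\|u\|\,\|\xi\|$, which, on $\{\|\xi\|>1\}$, is dominated (uniformly for $u$ in any compact subset of $\mathcal{U}$) by a $\mu_{i}$-integrable function thanks to condition (iv) of Definition \ref{def:admissible parameters}. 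Dominated convergence yields the desired uniform convergence on compacta of $\mathcal{U}$.

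For (b), Lemma \ref{lem:space-differentiability of F R phi psi} ensures $R^{I}\in C^{1}(\mathcal{U})$, hence $R^{I}$ is locally Lipschitz. Fix $t>0$ and $u=(v,w)\in\mathcal{U}$. The set $\Gamma:=\{(\psi^{I}(s,u),\mathrm{e}^{\beta_{JJ}^{\top}s}w):s\in[0,t]\}$ is compact by continuity of $\psi^{I}(\cdot,u)$; pick a compact neighborhood $\mathcal{K}\subset\mathcal{U}$ of $\Gamma$ on which $R^{I}$ is Lipschitz with constant $L$, and set $\varepsilon_{K}:=\sup_{\mathcal{K}}|R_{K}^{I}-R^{I}|$, which tends to $0$ by step (a). A standard continuity argument shows that for $K$ large, $\psi_{K}^{I}(\cdot,u)$ stays in $\mathcal{K}$ throughout $[0,t]$; subtracting the integral forms of the two Riccati equations then gives
\[
|\psi_{K}^{I}(s,u)-\psi^{I}(s,u)|\leqslant t\,\varepsilon_{K}+L\int_{0}^{s}|\psi_{K}^{I}(r,u)-\psi^{I}(r,u)|\,\mathrm{d}r,
\]
and Gronwall's lemma yields $|\psi_{K}^{I}(t,u)-\psi^{I}(t,u)|\leqslant t\,\varepsilon_{K}\,\mathrm{e}^{Lt}\longrightarrow 0$ as $K\to\infty$.

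The main technical point is the bootstrap step in (b): a priori one only knows $\psi_{K}^{I}(0,u)=v\in\Gamma$, so one must rule out escape from $\mathcal{K}$ before time $t$. This is handled by applying the Gronwall estimate up to the first exit time $t_{K}$ from $\mathcal{K}$ and observing that the resulting $O(\varepsilon_{K})$ bound forces $t_{K}>t$ for sufficiently large $K$, provided $\mathcal{K}$ is chosen with a strictly positive buffer around $\Gamma$.
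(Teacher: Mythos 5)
Your proposal is correct, and its skeleton coincides with the paper's: reduce to the $I$-component, write both Riccati equations in integral form, split the difference of the right-hand sides into $(R_{K}^{I}-R^{I})(\psi_{K}^{I},\cdot)$ plus $R^{I}(\psi_{K}^{I},\cdot)-R^{I}(\psi^{I},\cdot)$, bound the first piece by an $\varepsilon_{K}\to0$ (your step (a) is exactly the paper's estimate \eqref{eq:supremum of R_1 and R_K,1}, using $\mathrm{Re}\,\langle u,\xi\rangle\leqslant0$ and the first-moment condition (iv)), control the second by a local Lipschitz constant of $R^{I}$, and conclude with Gronwall. The one genuine divergence is how you secure a $K$-uniform region on which the Lipschitz constant and the bound $\varepsilon_{K}$ apply. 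The paper imports the quantitative a priori estimate (6.16) from the proof of \cite[Proposition 6.1]{MR1994043}, which bounds $\sup_{t\in[0,T]}\Vert\psi_{K}^{I}(t,u)\Vert$ by a constant that — after checking that the constant there depends only on $\alpha,\beta,\mu$ and using $\mu_{K,i}\leqslant\mu_{i}$ — is independent of $K$; this gives boundedness of all approximating trajectories at the outset. You instead run a bootstrap: apply the Gronwall estimate up to the first exit time of $\psi_{K}^{I}(\cdot,u)$ from a compact neighborhood $\mathcal{K}$ of the limiting trajectory $\Gamma$ with a positive buffer $\rho$, and observe that the resulting $O(\varepsilon_{K})$ bound contradicts exit before time $t$ once $t\varepsilon_{K}\mathrm{e}^{Lt}<\rho$. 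Both are valid. Your route is more self-contained (it avoids re-auditing the external proof for $K$-independence of the constant), at the price of the extra exit-time argument; the paper's route gives a bound valid for all $K\geqslant1$ simultaneously rather than only for $K$ large. A small point worth making explicit in your write-up: choose $\mathcal{K}$ convex (e.g.\ the closed convex hull of the $\rho$-neighborhood of $\Gamma$ intersected with $\mathcal{U}$, which is convex), so that $R^{I}\in C^{1}(\mathcal{U})$ immediately yields the Lipschitz constant $L=\sup_{\mathcal{K}}\Vert DR^{I}\Vert$; and note that global-in-time existence of $\psi_{K}^{I}$ is already guaranteed because $X_{K}$ is a conservative affine process.
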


\begin{proof}
Clearly, we only need to show the pointwise convergence of $\psi_{K}^{I}$
to $\psi^{I}$. Let $u=(v,w)\in\mathbb{C}_{\leqslant0}^{m}\times\mathrm{i}\mathbb{R}^{n}$
and $T>0$ be fixed.

By the Riccati equations for $\psi^{I}$ and $\psi_{K}^{I}$, we get
\begin{equation}
\psi^{I}(t,u)=v+\int_{0}^{t}R^{I}\left(\psi^{I}\left(s,u\right),\mathrm{e}^{\beta_{JJ}^{\top}s}w\right)\mathrm{d}s,\quad t\geqslant0,\label{eq: Lemma 3.1 psi I}
\end{equation}
and
\begin{equation}
\psi_{K}^{I}(t,u)=v+\int_{0}^{t}R_{K}^{I}\left(\psi_{K}^{I}\left(s,u\right),\mathrm{e}^{\beta_{JJ}^{\top}s}w\right)\mathrm{d}s,\quad t\geqslant0.\label{eq: Lemma 3.1 psi K}
\end{equation}
In view of the formula (6.16) in the proof of \cite[Propostion 6.1]{MR1994043},
we have
\begin{align}
\sup_{t\in[0,T]}\left\Vert \psi_{K}^{I}(t,u)\right\Vert ^{2} & \leqslant\sup_{t\in[0,T]}\left(\left\Vert v\right\Vert ^{2}+c_{1}\int_{0}^{t}\left(1+\left\Vert \mathrm{e}^{\beta_{JJ}^{\top}s}w\right\Vert ^{2}\right)\mathrm{d}s\right)\nonumber \\
 & \quad\quad\quad\quad\quad\quad\quad\times\exp\left\{ c_{1}\int_{0}^{t}\left(1+\left\Vert \mathrm{e}^{\beta_{JJ}^{\top}s}w\right\Vert ^{2}\right)\mathrm{d}s\right\} \nonumber \\
 & \leqslant\left(\left\Vert v\right\Vert ^{2}+c_{1}\int_{0}^{T}\left(1+\left\Vert \mathrm{e}^{\beta_{JJ}^{\top}s}w\right\Vert ^{2}\right)\mathrm{d}s\right)\nonumber \\
 & \quad\quad\quad\quad\times\exp\left\{ c_{1}\int_{0}^{T}\left(1+\left\Vert \mathrm{e}^{\beta_{JJ}^{\top}s}w\right\Vert ^{2}\right)\mathrm{d}s\right\} ,\label{eq:psi_K estimate}
\end{align}
for some positive constant $c_{1}$. Moreover, by checking carefully
the proof of \cite[Propostion 6.1]{MR1994043} and noting that $\mu_{K,i}\leqslant\mu_{i}$,
we can actually choose $c_{1}$ in such a way that it depends only
on the parameters $\alpha,\thinspace\beta,\thinspace\mu$. So $c_{1}$
is independent of $K$. Similarly, the same inequality holds for $\psi^{I}$:
\begin{align*}
\sup_{t\in[0,T]}\left\Vert \psi^{I}(t,u)\right\Vert ^{2} & \leqslant\left(\left\Vert v\right\Vert ^{2}+c_{1}\int_{0}^{T}\left(1+\left\Vert \mathrm{e}^{\beta_{JJ}^{\top}s}w\right\Vert ^{2}\right)\mathrm{d}s\right)\\
 & \quad\quad\quad\quad\times\exp\left\{ c_{1}\int_{0}^{T}\left(1+\left\Vert \mathrm{e}^{\beta_{JJ}^{\top}s}w\right\Vert ^{2}\right)\mathrm{d}s\right\} .
\end{align*}

According to Lemma \ref{lem:space-differentiability of F R phi psi},
the mapping $u\mapsto R^{I}(u):\mathcal{U}\to\mathbb{C}^{m}$ is locally
Lipschitz continuous. Therefore, for each $L>0$, there exists a constant
$c_{2}=c_{2}(L)>0$ such that
\begin{equation}
\Vert R_{i}(u_{1})-R_{i}(u_{2})\Vert\leqslant c_{2}\left\Vert u_{1}-u_{2}\right\Vert ,\quad\text{for all \ensuremath{i\in I} and }\left\Vert u_{1}\right\Vert ,\thinspace\left\Vert u_{2}\right\Vert \leqslant L.\label{eq:lipschitz continuity of R}
\end{equation}
In addition, it is easy to see that for $u\in\mathcal{U}$,
\begin{align}
\Vert R_{i}(u)-R_{K,i}(u)\Vert & =\left|\int_{\left\{ \Vert\xi\Vert>K\right\} }\left(\mathrm{e}^{\langle u,\xi\rangle}-1-\langle u,\xi\rangle\right)\mu_{i}\left(\mathrm{d}\xi\right)\right|\nonumber \\
 & \leqslant\int_{\left\{ \Vert\xi\Vert>K\right\} }2\mu_{i}\left(\mathrm{d}\xi\right)+\Vert u\Vert\int_{\left\{ \Vert\xi\Vert>K\right\} }\left\Vert \xi\right\Vert \mu_{i}\left(\mathrm{d}\xi\right)\nonumber \\
 & \leqslant\varepsilon_{K}\left(1+\Vert u\Vert\right),\label{eq:supremum of R_1 and R_K,1}
\end{align}
where $\varepsilon_{K}:=\sum_{i=1}^{m}\int_{\{\Vert\xi\Vert>K\}}\left(2+\Vert\xi\Vert\right)\mu_{i}(\mathrm{d\xi)}$.
Note that $\varepsilon_{K}\to0$ as $K\to\infty$ by dominated convergence.

Let
\[
g_{K}(t):=\left\Vert \psi^{I}(t,u)-\psi_{K}^{I}(t,u)\right\Vert ,\quad t\in\left[0,T\right].
\]
By \eqref{eq: Lemma 3.1 psi I} and \eqref{eq: Lemma 3.1 psi K},
we have
\begin{align}
g_{K}(t) & \leqslant\left\Vert \int_{0}^{t}R^{I}\left(\psi^{I}\left(s,u\right),\mathrm{e}^{\beta_{JJ}^{\top}s}w\right)\mathrm{d}s-\int_{0}^{t}R_{K}^{I}\left(\psi_{K}^{I}\left(s,u\right),\mathrm{e}^{\beta_{JJ}^{\top}s}w\right)\mathrm{d}s\right\Vert \nonumber \\
 & \leqslant\sum_{i=1}^{m}\int_{0}^{t}\left\Vert R_{i}\left(\psi^{I}\left(s,u\right),\mathrm{e}^{\beta_{JJ}^{\top}s}w\right)-R_{i}\left(\psi_{K}^{I}\left(s,u\right),\mathrm{e}^{\beta_{JJ}^{\top}s}w\right)\right\Vert \mathrm{d}s\nonumber \\
 & \quad+\sum_{i=1}^{m}\int_{0}^{t}\left\Vert R_{i}\left(\psi_{K}^{I}\left(s,u\right),\mathrm{e}^{\beta_{JJ}^{\top}s}w\right)-R_{K,i}\left(\psi_{K}^{I}\left(s,u\right),\mathrm{e}^{\beta_{JJ}^{\top}s}w\right)\right\Vert \mathrm{d}s.\label{eq: lemma 3.1 first esti g_K}
\end{align}
In virtue of \eqref{eq:psi_K estimate}, there exists a constant $c_{3}=c_{3}(T)>0$
such that
\[
\sup_{K\in[1,\infty)}\sup_{s\in\left[0,T\right]}\left\Vert \psi_{K}^{I}\left(s,u\right)\right\Vert \leqslant c_{3}<\infty,
\]
which implies
\begin{equation}
\sup_{K\in[1,\infty)}\sup_{s\in\left[0,T\right]}\left\Vert \left(\psi_{K}^{I}(s,u),\mathrm{e}^{\beta_{JJ}^{\top}s}w\right)\right\Vert \leqslant c_{4}<\infty.\label{eq: lemma 3.1 sup psi K}
\end{equation}
So, for $0<s\leqslant T$, we get

\begin{equation}
\left\Vert R_{i}\left(\psi^{I}\left(s,u\right),\mathrm{e}^{\beta_{JJ}^{\top}s}w\right)-R_{i}\left(\psi_{K}^{I}\left(s,u\right),\mathrm{e}^{\beta_{JJ}^{\top}s}w\right)\right\Vert \leqslant c_{5}\left\Vert \psi^{I}\left(s,u\right)-\psi_{K}^{I}\left(s,u\right)\right\Vert \label{eq: lemma 3.1 second esti differ}
\end{equation}
from \eqref{eq:lipschitz continuity of R}, and obtain
\begin{equation}
\left\Vert R_{i}\left(\psi_{K}^{I}\left(s,u\right),\mathrm{e}^{\beta_{JJ}^{\top}s}w\right)-R_{K,i}\left(\psi_{K}^{I}\left(s,u\right),\mathrm{e}^{\beta_{JJ}^{\top}s}w\right)\right\Vert \leqslant\varepsilon_{K}\left(1+c_{6}\right)\label{eq:estimate for Ri(psi_K)-R_K(psi_K)}
\end{equation}
from \eqref{eq:supremum of R_1 and R_K,1} and \eqref{eq: lemma 3.1 sup psi K}.
Here, $c_{5},\thinspace c_{6}>0$ are constants not depending on $K$.

Combining \eqref{eq: lemma 3.1 first esti g_K}, (\ref{eq: lemma 3.1 second esti differ})
and \eqref{eq:estimate for Ri(psi_K)-R_K(psi_K)} yields, for $t\in\left[0,T\right]$,
\begin{align*}
g_{K}(t) & \leqslant c_{5}m\int_{0}^{t}\left\Vert \psi^{I}\left(s,u\right)-\psi_{K}^{I}\left(s,u\right)\right\Vert \mathrm{d}s+m\varepsilon_{K}\left(1+c_{6}\right)t\\
 & =c_{5}m\int_{0}^{t}g_{K}(s)\mathrm{d}s+m\varepsilon_{K}\left(1+c_{6}\right)t.
\end{align*}
Gronwall's inequality implies
\begin{align*}
g_{K}(t) & \leqslant m\varepsilon_{K}\left(1+c_{6}\right)t+m^{2}\varepsilon_{K}\left(1+c_{6}\right)c_{5}\int_{0}^{t}s\mathrm{e}{}^{c_{5}m\left(t-s\right)}\mathrm{d}s\\
 & \leqslant m\varepsilon_{K}\left(1+c_{6}\right)\left(T+c_{5}mT^{2}\mathrm{e}^{c_{5}mT}\right),\qquad t\in[0,T].
\end{align*}
Since $\varepsilon_{K}\to0$ as $K\to\infty$, we see that $g_{K}(t)\to0$
and thus
\[
\psi_{K}^{I}\left(t,u\right)\to\psi^{I}\left(t,u\right),\quad\text{for all }t\in\left[0,T\right].
\]
\end{proof}
For $K\in(1,\infty)$, the generator $\mathcal{A}_{K}$ of $(X_{K,t})_{t\geqslant0}$
is given by
\begin{align*}
\mathcal{A}_{K}f(x) & =\sum_{k,l=1}^{d}\left(\sum_{i=1}^{m}\alpha_{i,kl}x_{i}\right)\frac{\partial^{2}f(x)}{\partial x_{k}\partial x_{l}}+\langle\beta x,\nabla f(x)\rangle\\
 & \quad+\sum_{i=1}^{m}x_{i}\int_{D\backslash\{0\}}\left(f\left(x+\xi\right)-f\left(x\right)-\langle\nabla f(x),\xi\rangle\right)\mu_{K,i}\left(\mathrm{d}\xi\right),
\end{align*}
defined for every $f\in C_{c}^{2}(D)$.

To avoid the complication of discussing the domain of definition for
the generator $\mathcal{A}_{K}$, we introduce the operator $\mathcal{A}_{K}^{\sharp}$,
which was also used in \cite{MR1994043}.
\begin{defn}
If $f\in C^{2}(D)$ is such that for all $x\in D$,
\[
\sum_{i=1}^{m}\int_{D\backslash\{0\}}\left|f\left(x+\xi\right)-f\left(x\right)-\langle\nabla f(x),\xi\rangle\right|\mu_{K,i}\left(\mathrm{d}\xi\right)<\infty,
\]
then we say that $\mathcal{A}_{K}^{\sharp}f$ is well-defined and
let
\begin{align*}
\mathcal{A}_{K}^{\sharp}f(x) & :=\sum_{k,l=1}^{d}\left(\sum_{i=1}^{m}\alpha_{i,kl}x_{i}\right)\frac{\partial^{2}f(x)}{\partial x_{k}\partial x_{l}}+\langle\beta x,\nabla f(x)\rangle\\
 & \quad+\sum_{i=1}^{m}x_{i}\int_{D\backslash\{0\}}\left(f\left(x+\xi\right)-f\left(x\right)-\langle\nabla f(x),\xi\rangle\right)\mu_{K,i}\left(\mathrm{d}\xi\right)
\end{align*}
for $x\in D$.
\end{defn}

It is easy to see that if $f\in C^{2}(D)$ has bounded first and second
order derivatives, then $\mathcal{A}_{K}^{\sharp}f$ is well-defined.

Recall that the matrix $\beta$ can be written as in (\ref{eq: form of beta}).
We define the following matrices
\[
M_{1}:=\int_{0}^{\infty}\mathrm{e}^{t\beta_{II}^{\top}}\mathrm{e}^{t\beta_{II}}\mathrm{d}t\quad\text{and}\quad M_{2}:=\int_{0}^{\infty}\mathrm{e}^{t\beta_{JJ}^{\top}}\mathrm{e}^{t\beta_{JJ}}\mathrm{d}t.
\]
Since $\beta_{II}\in\mathbb{M}_{m}^{-}$ and $\beta_{JJ}\in\mathbb{M}_{n}^{-}$,
the matrices $M_{1}$ and $M_{2}$ are well-defined. Moreover, we
have that $M_{1}\in\mathbb{S}_{m}^{++}$ and $M_{2}\in\mathbb{S}_{n}^{++}$.
In the following we will often write $x=(y,z)\in\mathbb{R}_{\geqslant0}^{m}\times\mathbb{R}^{n}$\textcolor{red}{{}
}for\textcolor{red}{{} }$x\in D$. For $y_{1},\thinspace y_{2}\in\mathbb{R}_{\geqslant0}^{m}$
and $z_{1},\thinspace z_{2}\in\mathbb{R}^{n}$, we define
\[
\langle y_{1},y_{2}\rangle_{I}:=\int_{0}^{\infty}\langle\mathrm{e}^{t\beta_{II}}y_{1},\mathrm{e}^{t\beta_{II}}y_{2}\rangle\mathrm{d}t\quad\text{and}\quad\langle z_{1},z_{2}\rangle_{J}:=\int_{0}^{\infty}\langle\mathrm{e}^{t\beta_{JJ}}z_{1},\mathrm{e}^{t\beta_{JJ}}z_{2}\rangle\mathrm{d}t.
\]
It is easily verified that $\langle\cdot,\cdot\rangle_{I}$ and $\langle\cdot,\cdot\rangle_{J}$
define inner products on $\mathbb{R}^{m}$ and $\mathbb{R}^{n}$,
respectively. Moreover, we have that
\[
\langle y_{1},y_{2}\rangle_{I}=y_{2}^{\top}M_{1}y_{1}=\langle y_{1},M_{1}y_{2}\rangle\quad\text{and}\quad\langle z_{1},z_{2}\rangle_{J}=z_{2}^{\top}M_{2}z_{1}=\langle z_{1},M_{2}z_{2}\rangle.
\]
The norms on $\mathbb{R}^{m}$ and $\mathbb{R}^{n}$ induced by the
scalar products $\langle\cdot,\cdot\rangle_{I}$ and $\langle\cdot,\cdot\rangle_{J}$
are denoted by
\[
\Vert y\Vert_{I}:=\sqrt{\langle y,y\rangle_{I}}\quad\text{and}\quad\Vert z\Vert_{J}:=\sqrt{\langle z,z\rangle_{J}},
\]
respectively.\\

In the following lemma we construct a Lyapunov function $V$ for $(X_{K,t})_{t\geqslant0}$.
Note that the definition of $V$ does not depend on $K$.
\begin{lem}
\label{lem:foster-lyapunov estimate lemma}Assume $m\geqslant1$ and
$n\geqslant1$. Suppose that $\beta\in\mathbb{M}_{d}^{-}$. Let $V\in C^{2}(D,\mathbb{R})$
be such that $V>0$ on $D$ and
\[
V(x)=\left(\langle y,y\rangle_{I}+\varepsilon\langle z,z\rangle_{J}\right)^{1/2},\quad\text{whenever \ensuremath{x=(y,z)\in\mathbb{R}_{\geqslant0}^{m}\times\mathbb{R}^{n}} with }\Vert x\Vert>2.
\]
Here $\varepsilon>0$ is some small enough constant. Then $\mathcal{A}_{K}^{\sharp}V$
is well-defined and $V$ is a Lyapunov function for $(X_{K,t})_{t\geqslant0}$,
that is, there exist positive constants $c$ and $C$ such that
\[
\mathcal{A}_{K}^{\sharp}V(x)\leqslant-cV(x)+C,\quad\mbox{for all \ensuremath{x\in D.}}
\]
Moreover, the constants $c$ and $C$ can be chosen to be independent
of $K$.
\end{lem}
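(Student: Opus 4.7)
The plan is to verify the Foster--Lyapunov inequality pointwise by splitting $D$ into a far-field $\{\Vert x\Vert>R\}$ (for a large $R>3$) and the compact region $\{\Vert x\Vert\leqslant R\}$, and by exhibiting a strongly negative drift from $\langle\beta x,\nabla V(x)\rangle$ on the far-field which absorbs the second-order and jump contributions once $\varepsilon$ is sufficiently small. Uniformity in $K$ will follow throughout from the domination $\mu_{K,i}\leqslant\mu_{i}$ together with the admissibility conditions on $\mu_{i}$.

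On $\{\Vert x\Vert>2\}$ the derivatives of $V=Q^{1/2}$ with $Q(x)=y^{\top}M_{1}y+\varepsilon z^{\top}M_{2}z$ are explicit: $\nabla V(x)=V(x)^{-1}(M_{1}y,\,\varepsilon M_{2}z)$ and $V_{kl}(x)=\frac{Q_{kl}}{2V(x)}-\frac{Q_{k}(x)Q_{l}(x)}{4V(x)^{3}}$, yielding $|V_{kl}(x)|\leqslant C/V(x)$ since $Q_{kl}$ is constant and $Q_{k}=O(V)$. Because $V$ is the square root of a positive-definite quadratic form, it is a norm on $\mathbb{R}^{d}$ there, and both the triangle inequality $V(x+\xi)\leqslant V(x)+V(\xi)$ and the dual-norm bound $|\langle\nabla V(x),\xi\rangle|\leqslant V(\xi)$ hold. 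Differentiating under the integrals defining $M_{1}$ and $M_{2}$ produces the Lyapunov identities $\beta_{II}^{\top}M_{1}+M_{1}\beta_{II}=-I_{m}$ and $\beta_{JJ}^{\top}M_{2}+M_{2}\beta_{JJ}=-I_{n}$, and the block form of $\beta$ recorded in Remark~\ref{rem: beta condition} then yields
\[
\langle\beta x,\nabla V(x)\rangle=\frac{1}{V(x)}\Bigl(-\tfrac{1}{2}\Vert y\Vert^{2}-\tfrac{\varepsilon}{2}\Vert z\Vert^{2}+\varepsilon\,y^{\top}\beta_{JI}^{\top}M_{2}z\Bigr).
\]
Young's inequality absorbs the cross term for $\varepsilon$ small enough (depending on $\Vert\beta_{JI}\Vert$ and $\Vert M_{2}\Vert$), giving $\langle\beta x,\nabla V(x)\rangle\leqslant-c_{1}V(x)$ with $c_{1}>0$ independent of $K$. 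Combined with $|V_{kl}|\leqslant C/V$ and $|x_{i}|\leqslant CV(x)$, the second-order term is then bounded uniformly.

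The jump integrand is split at $\Vert\xi\Vert=1$. For $\Vert\xi\Vert\leqslant1$ with $\Vert x\Vert>3$ the segment from $x$ to $x+\xi$ stays inside $\{\Vert\cdot\Vert>2\}$, so Taylor's theorem together with the Hessian bound $\Vert D^{2}V\Vert=O(1/V)$ gives $V(x+\xi)-V(x)-\langle\nabla V,\xi\rangle\leqslant C\Vert\xi\Vert^{2}/V(x)$, integrable against $\mu_{K,i}\leqslant\mu_{i}$ by condition (iv). For $\Vert\xi\Vert>1$ the triangle and dual-norm inequalities yield
\[
V(x+\xi)-V(x)-\langle\nabla V(x),\xi\rangle\leqslant V(\xi)-\langle\nabla V(x),\xi\rangle,
\]
whose $\mu_{K,i}$-integral is controlled by $\int_{\Vert\xi\Vert>1}V(\xi)\,\mu_{i}(\mathrm{d}\xi)+\Vert\nabla V(x)\Vert\cdot\bigl\Vert\int_{\Vert\xi\Vert>1}\xi\,\mu_{i}(\mathrm{d}\xi)\bigr\Vert$, both finite and $K$-independent by condition (iv). After multiplying by $x_{i}\leqslant CV(x)$ and summing over $i\in I$, this produces a jump contribution of the form $C_{2}V(x)+C_{3}$; the crucial step will be to verify that $\varepsilon$ can be tuned small enough (here the off-diagonal admissibility condition (vi) on $\beta_{ki}-\int\xi_{k}\mu_{i}(\mathrm{d}\xi)$ enters) so that $C_{2}<c_{1}$, leaving a strictly negative effective drift rate on the far-field.

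On the compact region $\{\Vert x\Vert\leqslant R\}$ the functions $V$, $\nabla V$, $D^{2}V$ are bounded because $V\in C^{2}(D)$, and the jump integrand is dominated by $C(1\wedge\Vert\xi\Vert^{2}+\Vert\xi\Vert\mathbbm{1}_{\{\Vert\xi\Vert>1\}})$, which is $\mu_{K,i}$-integrable uniformly in $K$ since $\mu_{K,i}\leqslant\mu_{i}$. Hence $\mathcal{A}_{K}^{\sharp}V$ is uniformly bounded there, and absorbing this bound into the additive constant yields $\mathcal{A}_{K}^{\sharp}V\leqslant-cV+C$ on all of $D$ with constants independent of $K$. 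The anticipated main obstacle is the jump estimate for $\Vert\xi\Vert>1$, where the upper bound carries no $1/V(x)$ factor and must be dominated by the drift through the careful choice of $\varepsilon$.
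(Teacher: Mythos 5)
Your overall architecture matches the paper's: explicit formulas for $\nabla V$ and $\nabla^{2}V$ on $\{\Vert x\Vert>2\}$, the Lyapunov identities $M_{1}\beta_{II}+\beta_{II}^{\top}M_{1}=-I_{m}$ and $M_{2}\beta_{JJ}+\beta_{JJ}^{\top}M_{2}=-I_{n}$ to get $\langle\beta x,\nabla V(x)\rangle\leqslant-c_{1}V(x)$ after tuning $\varepsilon$, the bound $\vert\partial^{2}V/\partial x_{k}\partial x_{l}\vert\leqslant C/V$ to kill the diffusion term and the small jumps, and domination $\mu_{K,i}\leqslant\mu_{i}$ for uniformity in $K$. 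However, there is a genuine gap exactly where you flag your ``anticipated main obstacle,'' and the fix you propose does not work. With the jump threshold frozen at $\Vert\xi\Vert=1$, your big-jump bound after multiplying by $x_{i}\leqslant CV(x)$ has the form $C_{2}V(x)$ with $C_{2}\asymp\sum_{i}\int_{\{\Vert\xi\Vert>1\}}\Vert\xi\Vert\,\mu_{i}(\mathrm{d}\xi)$, a fixed finite number determined by $\mu$ alone. Shrinking $\varepsilon$ cannot make $C_{2}$ small: the factor $x_{i}$ lives in the $I$-coordinates, so $x_{i}\leqslant C\Vert y\Vert_{I}\leqslant CV(x)$ with $C$ independent of $\varepsilon$, and likewise $\Vert\nabla V\Vert_{\infty}$ and the $I$-part of $V(\xi)$ do not vanish as $\varepsilon\to0$. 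Condition (vi) is irrelevant here: it is a sign condition on $\beta_{ki}-\int\xi_{k}\mu_{i}(\mathrm{d}\xi)$ for $k\in I\setminus\{i\}$ (needed for invariance of the cone), and it gives no comparison between $\int_{\{\Vert\xi\Vert>1\}}\Vert\xi\Vert\,\mu_{i}(\mathrm{d}\xi)$ and the drift rate $c_{1}$.

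The missing idea is that the splitting threshold must be chosen \emph{after} the drift rate is fixed, and chosen large. Since condition (iv) gives $\int_{\{\Vert\xi\Vert>1\}}\Vert\xi\Vert\,\mu_{i}(\mathrm{d}\xi)<\infty$, dominated convergence yields $\int_{\{\Vert\xi\Vert\geqslant k\}}\Vert\xi\Vert\,\mu_{i}(\mathrm{d}\xi)\to0$ as $k\to\infty$; one picks $k_{0}$ so large that the resulting big-jump coefficient is at most $\tfrac{1}{2}c_{1}$, strictly below the drift rate. The intermediate jumps $1<\Vert\xi\Vert<k_{0}$ are then absorbed into your Taylor/Hessian estimate, which still applies on $\{\Vert x\Vert>3k_{0}\}$ because the segment from $x$ to $x+r\xi$ stays in the far field, and $\int_{\{0<\Vert\xi\Vert<k_{0}\}}\Vert\xi\Vert^{2}\mu_{i}(\mathrm{d}\xi)<\infty$; this contributes only an additive constant. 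With that modification (and the compact region handled by continuity, as you do), your argument closes and is, in substance, the paper's proof.
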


\begin{proof}
For $x_{1}=(y_{1},z_{1})\in\mathbb{R}_{\geqslant0}^{m}\times\mathbb{R}^{n}$
and $x_{2}=(y_{2},z_{2})\in\mathbb{R}_{\geqslant0}^{m}\times\mathbb{R}^{n}$,
we define
\[
\langle x_{1},x_{2}\rangle_{\beta}:=\langle y_{1},z_{1}\rangle_{I}+\varepsilon\langle y_{2},z_{2}\rangle_{J},
\]
where $\varepsilon>0$ is a small constant to be determined later.
Set $\tilde{V}(x):=\left(\langle x,x\rangle_{\beta}\right)^{1/2}$,
$x\in D$. Then $\widetilde{V}$ is smooth on $\left\{ x\in D:\|x\|>1\right\} $.
By the extension lemma for smooth functions (see \cite[Lemma 2.26]{MR2954043}),
we can easily find a function $V\in C^{\infty}(D,\mathbb{R})$ such
that $V>0$ on $D$ and $V(x)=\widetilde{V}(x)=\left(\langle x,x\rangle_{\beta}\right)^{1/2}$
for $\|x\|>2$. So for all $x=(y,z)\in\mathbb{R}_{\geqslant0}^{m}\times\mathbb{R}^{n}$
with $\Vert x\Vert>2$, we have
\begin{equation}
\nabla V(y,z)=V(y,z)^{-1}\begin{pmatrix}M_{1}y\\
\varepsilon M_{2}z
\end{pmatrix}\label{eq:gradient of V}
\end{equation}
and\renewcommand*\arraystretch{1.7}\begin{equation}\label{eq:second derivative of V}
\nabla^2 V(y,z)=
\left( \begin{array}{cc}
\frac{M_{1}}{V(y,z)}-\frac{\left(M_{1}y\right)\left(M_{1}y\right)^{\top}}{V(y,z)^{3}}
& \frac{-\varepsilon\left(M_{1}y\right)\left(M_{2}z\right)^{\top}}{V(y,z)^{3}} \\
\frac{-\varepsilon\left(M_{1}y\right)\left(M_{2}z\right)^{\top}}{V(y,z)^{3}}
& \frac{\varepsilon M_{2}}{V(y,z)}-\frac{\varepsilon^{2}\left(M_{2}z\right)\left(M_{2}z\right)^{\top}}{V(y,z)^{3}}
\end{array} \right).
\end{equation}We write $\mathcal{A}_{K}^{\sharp}V=\mathcal{D}V+\mathcal{J}_{K}V$,
where
\begin{align}
\mathcal{D}V(x) & \text{:=}\sum_{k,l=1}^{d}\langle\alpha_{I,kl},x_{I}\rangle\frac{\partial^{2}V(x)}{\partial x_{k}\partial x_{l}}+\langle\beta x,\nabla V(x)\rangle,\label{eq: defi DV}\\
\mathcal{J}_{K}V(x) & :=\sum_{i=1}^{m}x_{i}\int_{D\backslash\{0\}}\left(V\left(x+\xi\right)-V\left(x\right)-\langle\nabla V(x),\xi\rangle\right)\mu_{K,i}\left(\mathrm{d}\xi\right).\label{eq: defin J_KV}
\end{align}
We now estimate $\mathcal{D}V(x)$ and $\mathcal{J}_{K}V(x)$ separately.
Let us first consider $\mathcal{D}V(x)$. We may further split $\mathcal{D}V(x)$
into the drift part and the diffusion part.

\textit{Drift.} Recall that $\beta_{IJ}=0$. Consider $x=(y,z)$ with
$\|x\|>2$. It follows from \eqref{eq:gradient of V} that
\begin{align*}
\langle\beta x,\nabla V(x)\rangle & =\langle\begin{pmatrix}\beta_{II}y\\
\beta_{JI}y+\beta_{JJ}z
\end{pmatrix},\begin{pmatrix}V(y,z)^{-1}M_{1}y\\
V(y,z)^{-1}\varepsilon M_{2}z
\end{pmatrix}\rangle\\
 & =V(y,z)^{-1}\left(\langle\beta_{II}y,M_{1}y\rangle+\langle\beta_{JI}y,\varepsilon M_{2}z\rangle+\langle\beta_{JJ}z,\varepsilon M_{2}z\rangle\right).
\end{align*}
The first and the third inner product on the right-hand side may be
estimated similarly. Namely, we have
\[
V(y,z)^{-1}\langle\beta_{II}y,M_{1}y\rangle=\frac{1}{2}V(y,z)^{-1}y^{\top}\left(M_{1}\beta_{II}+\beta_{II}^{\top}M_{1}\right)y.
\]
The definition of $M_{1}$ implies
\begin{align*}
M_{1}\beta_{II}+\beta_{II}^{\top}M_{1} & =\int_{0}^{\infty}\left(\mathrm{e}^{t\beta_{II}^{\top}}\mathrm{e}^{t\beta_{II}}\beta_{II}+\beta_{II}^{\top}\mathrm{e}^{t\beta_{II}^{\top}}\mathrm{e}^{t\beta_{II}}\right)\mathrm{d}t\\
 & =\int_{0}^{\infty}\left(\frac{\mathrm{d}}{\mathrm{d}t}\mathrm{e}^{t\beta_{II}^{\top}}\mathrm{e}^{t\beta_{II}}\right)\mathrm{d}t\\
 & =\left.\mathrm{e}^{t\beta_{II}^{\top}}\mathrm{e}^{t\beta_{II}}\right|_{t=0}^{\infty}\\
 & =-I_{m},
\end{align*}
where $I_{m}$ denotes the $m\times m$ identity matrix. Hence
\[
V(y,z)^{-1}\langle\beta_{II}y,M_{1}y\rangle=-\frac{1}{2}V(y,z)^{-1}y^{\top}y.
\]
Since all norms on $\mathbb{R}^{m}$ are equivalent, we have
\[
-y^{\top}y\leqslant-c_{1}y^{\top}M_{1}y=-c_{1}\langle y,y\rangle_{I}\leqslant-c_{1}\Vert y\Vert_{I}^{2},
\]
for some positive constant $c_{1}$ that is independent of $K$. So
\begin{align}
V(y,z)^{-1}\langle\beta_{II}y,M_{1}y\rangle & \leqslant-c_{1}\Vert y\Vert_{I}^{2}V(y,z)^{-1}.\label{eq:drift estimate ell ell}
\end{align}
In the very same way we obtain
\begin{equation}
V(y,z)^{-1}\langle\beta_{JJ}z,\varepsilon M_{2}z\rangle\leqslant-c_{2}\varepsilon\Vert z\Vert_{J}^{2}V(y,z)^{-1},\label{eq:drift estimate JJ}
\end{equation}
for some constant $c_{2}>0$. To estimate the remaining term, we can
use Cauchy Schwarz inequality to obtain
\begin{align*}
\left|V(y,z)^{-1}\langle\beta_{JI}y,\varepsilon M_{2}z\rangle\right| & \leqslant\varepsilon V(y,z)^{-1}\left\Vert \beta_{JI}y\right\Vert \Vert M_{2}z\Vert\\
 & \leqslant c_{3}\varepsilon V(y,z)^{-1}\left\Vert y\right\Vert \Vert z\Vert,
\end{align*}
for some constant $c_{3}>0$. Using the fact that all norms on $\mathbb{R}^{d}$
are equivalent, we get
\begin{align}
\left|V(y,z)^{-1}\langle\beta_{JI}y,\varepsilon M_{2}z\rangle\right| & \leqslant\varepsilon c_{4}V(y,z)^{-1}\Vert y\Vert_{I}\Vert z\Vert_{J}\nonumber \\
 & =c_{4}\frac{\sqrt{\varepsilon}\sqrt{\langle y,y\rangle_{I}}\sqrt{\varepsilon\langle z,z\rangle_{J}}}{\sqrt{\langle y,y\rangle_{I}+\varepsilon\langle z,z\rangle_{J}}}\nonumber \\
 & \leqslant c_{4}\sqrt{\varepsilon}\Vert y\Vert_{I}.\label{eq:drift estimate ell J}
\end{align}
Combining \eqref{eq:drift estimate ell ell}, \eqref{eq:drift estimate JJ}
and \eqref{eq:drift estimate ell J}, we obtain
\begin{align*}
\langle\beta x,\nabla V(x)\rangle & \leqslant-c_{1}\Vert y\Vert_{I}^{2}V(y,z)^{-1}-\varepsilon c_{2}\Vert z\Vert_{J}^{2}V(y,z)^{-1}+c_{4}\sqrt{\varepsilon}\Vert y\Vert_{I}\\
 & \leqslant-c_{5}\left(\langle y,y\rangle_{I}+\varepsilon\langle z,z\rangle_{J}\right)V(y,z)^{-1}+c_{4}\sqrt{\varepsilon}\Vert y\Vert_{I}\\
 & \leqslant-c_{5}V(y,z)+c_{4}\sqrt{\varepsilon}V(y,z),
\end{align*}
where $c_{5}:=c_{1}\wedge c_{2}>0$. Since $c_{4}$ and $c_{5}$ depend
only on $\beta$ but not on $\varepsilon$, by choosing $\varepsilon=\varepsilon_{0}>0$
sufficiently small, we get
\begin{equation}
\langle\beta x,\nabla V(x)\rangle\leqslant-c_{6}V(x),\quad x\in D\text{\ensuremath{\quad\mbox{with \ensuremath{\|x\|>2}}}}.\label{eq:estimate drift}
\end{equation}
\textit{\emph{From}}\textit{ }\textit{\emph{now on we take $\varepsilon=\varepsilon_{0}$
as fixed. In particular, the upcoming constants $c_{7}-c_{11}$ may
depend on $\varepsilon$.}}

\textit{Diffusion.} By \eqref{eq:second derivative of V}, we have
\begin{equation}
\left|\frac{\partial^{2}V(x)}{\partial x_{k}\partial x_{l}}\right|\leqslant\frac{c_{7}}{V(x)},\quad\text{for all }\|x\|>2,\ k,l\in\{1,\ldots,d\},\label{eq: exact bound for nabla2}
\end{equation}
where $c_{7}>0$ is a constant. This implies
\[
\sup_{x\in D}\left|x_{i}\frac{\partial^{2}V(x)}{\partial x_{k}\partial x_{l}}\right|<\infty,\quad\text{for all }i\in I\ \text{and\ }k,l\in\{1,\ldots,d\}.
\]
We conclude that
\begin{equation}
\left|\sum_{k,l=1}^{d}\left(\sum_{i\in I}\alpha_{i,kl}x_{i}\right)\frac{\partial^{2}V(x)}{\partial x_{k}\partial x_{l}}\right|\leqslant c_{8},\quad\text{for all \ensuremath{x\in D,}}\label{eq:estimate diffusion}
\end{equation}
where $c_{8}>0$ is a constant.

Turning to the jump part $\mathcal{J}_{K}$, we define for $i\in I$
and $k\in\mathbb{N},$
\begin{align*}
\mathcal{J}_{k,i,\ast}V\left(x\right) & :=x_{i}\int_{\{0<\Vert\xi\Vert<k\}}\left(V\left(x+\xi\right)-V(x)-\langle\nabla V(x),\xi\rangle\right)\mu_{K,i}\left(\mathrm{d}\xi\right),
\end{align*}
and
\[
\mathcal{J}_{k,i}^{\ast}V\left(x\right):=x_{i}\int_{\{\Vert\xi\Vert\geqslant k\}}\left(V\left(x+\xi\right)-V(x)-\langle\nabla V(x),\xi\rangle\right)\mu_{K,i}\left(\mathrm{d}\xi\right).
\]
So $\mathcal{J}_{K}V(x)=\sum_{i\in I}(\mathcal{J}_{k,i,\ast}V(x)+\mathcal{J}{}_{k,i}^{\ast}V(x))$.

\textit{Big jumps.} By the mean value theorem, we get
\begin{align}
\left|\mathcal{J}_{k,i}^{\ast}V\left(x\right)\right| & \leqslant\left\Vert x_{i}\right\Vert \int_{\{\Vert\xi\Vert\geqslant k\}}\left(\left\Vert \nabla V\right\Vert _{\infty}\left\Vert \xi\right\Vert +\left\Vert \nabla V(x)\right\Vert \left\Vert \xi\right\Vert \right)\mu_{i}\left(\mathrm{d}\xi\right)\nonumber \\
 & \leqslant2\Vert x\Vert\left\Vert \nabla V\right\Vert _{\infty}\int_{\{\Vert\xi\Vert\geqslant k\}}\left\Vert \xi\right\Vert \mu_{i}\left(\mathrm{d}\xi\right)\label{eq: esti J*_k,V}\\
 & \leqslant c_{9}\left(1+V(x)\right)\int_{\{\Vert\xi\Vert\geqslant k\}}\left\Vert \xi\right\Vert \mu_{i}\left(\mathrm{d}\xi\right)<\infty,\nonumber
\end{align}
where we used that $\Vert\nabla V\Vert_{\infty}=\sup_{x\in D}\left\Vert \nabla V(x)\right\Vert <\infty$,
as a consequence of \eqref{eq:gradient of V}. Hence, by dominated
convergence, we can find large enough $k=k_{0}>0$ such that
\[
\left|\mathcal{J}_{k_{0},i}^{\ast}V\left(x\right)\right|\leqslant\frac{1}{2}c_{6}\left(1+V(x)\right),\quad x\in D.
\]

\textit{Small jumps.} To estimate the small jump part, we apply \eqref{eq: exact bound for nabla2}
and the mean value theorem, yielding for $\|x\|>3k_{0}$,
\begin{align}
\left|\mathcal{J}_{k_{0},i,\ast}V\left(x\right)\right| & \leqslant\left|x_{i}\int_{\{0<\Vert\xi\Vert<k_{0}\}}\left(\int_{0}^{1}\langle\nabla V\left(x+r\xi\right)-\nabla V(x),\xi\rangle\right)\mathrm{d}r\mu_{K,i}\left(\mathrm{d}\xi\right)\right|\nonumber \\
 & \leqslant\left\Vert x_{i}\right\Vert \sup_{\widetilde{x}\in B_{k_{0}}(x)}\left\Vert \nabla^{2}V\left(\widetilde{x}\right)\right\Vert \int_{\{0<\Vert\xi\Vert<k_{0}\}}\left\Vert \xi\right\Vert ^{2}\mu_{i}\left(\mathrm{d}\xi\right)\label{eq: esti J_k_0,iV}\\
 & \leqslant c_{7}\Vert x\Vert\sup_{\widetilde{x}\in B_{k_{0}}(x)}\frac{1}{V(\tilde{x})}\int_{\{0<\Vert\xi\Vert<k_{0}\}}\left\Vert \xi\right\Vert ^{2}\mu_{i}\left(\mathrm{d}\xi\right)\nonumber \\
 & \leqslant c_{10}\frac{\Vert x\Vert}{\Vert x\Vert-k_{0}}\leqslant2c_{10}<\infty,\nonumber
\end{align}
with some positive constant $c_{10}$ not depending on $K$. Here
$B_{k_{0}}(x)$ denotes the ball with center $x$ and radius $k_{0}$.
Note that $\mathcal{J}_{k_{0},i,\ast}V\left(x\right)$ is continuous
in $x\in D$. Hence, we conclude that
\[
\left|\mathcal{J}_{K}V(x)\right|\leqslant\frac{1}{2}c_{6}V(x)+c_{11},\quad x\in D.
\]
Combining the latter inequality with \eqref{eq:estimate drift} and
\eqref{eq:estimate diffusion}, we obtain the desired result, namely,
\[
\mathcal{A}_{K}^{\sharp}V(x)=\mathcal{D}V(x)+\mathcal{J}_{K}V(x)\leqslant-\frac{1}{2}c_{6}V(x)+c_{12},\quad x\in D.
\]
\end{proof}
\begin{rem}
For the function $V$ defined in the last lemma, we can easily find
positive constants $c_{1},c_{2},c_{3},c_{4}$ such that for all $x\in D$,
\begin{equation}
V(x)\leqslant c_{1}\|x\|+c_{2}\quad\mbox{and\ensuremath{\quad\|x\|\leqslant c_{3}V(x)+c_{4}.} }\label{eq: equiv of V and norm}
\end{equation}
\end{rem}

\begin{prop}
\label{prop:finiteness of sup expectation}Assume $m\geqslant1$ and
$n\geqslant1$. Suppose that $\beta\in\mathbb{M}_{d}^{-}$. Let $c,\thinspace C$
and $V$ be the same as in Lemma \ref{lem:foster-lyapunov estimate lemma}.
Then
\begin{equation}
\mathbb{E}_{x}\left[V\left(X_{K,t}\right)\right]\leqslant\mathrm{e}^{-ct}V(x)+c^{-1}C\quad\text{for all\  \ensuremath{K\geqslant1,\ }}x\in D\text{ and }t\in\mathbb{R}_{\geqslant0}.\label{eq:sup first moment existence}
\end{equation}
\end{prop}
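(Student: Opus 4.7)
The plan is to run a Foster--Lyapunov argument based on the drift inequality $\mathcal{A}_K^\sharp V(x) \leqslant -cV(x) + C$ supplied by Lemma \ref{lem:foster-lyapunov estimate lemma}. The natural test process is $e^{cs}V(X_{K,s})$: if Dynkin's formula could be applied directly, then
\[
\mathbb{E}_x\bigl[e^{ct} V(X_{K,t})\bigr] = V(x) + \mathbb{E}_x\!\int_0^t e^{cs}\bigl(cV + \mathcal{A}_K^\sharp V\bigr)(X_{K,s})\,\mathrm{d}s \leqslant V(x) + \frac{C}{c}\bigl(e^{ct}-1\bigr),
\]
and dividing through by $e^{ct}$ would immediately yield the claimed bound $\mathbb{E}_x[V(X_{K,t})] \leqslant e^{-ct}V(x) + c^{-1}C$. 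The only real issue is that $V \notin C_c^2(D)$ -- it grows linearly in $\|x\|$ by \eqref{eq: equiv of V and norm} -- so Dynkin's formula does not apply verbatim and the martingale term in It\^o's formula is a priori only a local martingale.

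To justify the step above rigorously, I would use a localization. Let $\tau_n := \inf\{t\geqslant 0 : \|X_{K,t}\| \geqslant n\}$ and pick cutoffs $\chi_n \in C_c^\infty(D)$ with $\chi_n \equiv 1$ on $\{\|x\| \leqslant n\}$; then $V_n := \chi_n V \in C_c^2(D)$ lies in the domain of $\mathcal{A}_K$ and the standard Dynkin identity for $V_n$ gives
\[
\mathbb{E}_x\bigl[e^{c(t\wedge\tau_n)} V_n(X_{K,t\wedge\tau_n})\bigr] = V_n(x) + \mathbb{E}_x\!\int_0^{t\wedge\tau_n} e^{cs}\bigl(cV_n + \mathcal{A}_K V_n\bigr)(X_{K,s})\,\mathrm{d}s.
\]
On the set $\{s < \tau_n\}$ one has $X_{K,s} \in \{\|x\| \leqslant n\}$; the first two terms in the definition of $\mathcal{A}_K$ see only $V$, and since the jump kernel $\mu_{K,i}$ is supported on $\{\|\xi\|\leqslant K\}$, for $n$ large enough the jump integral sees $V_n = V$ on a whole $K$-neighborhood of the starting point. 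Thus $(cV_n + \mathcal{A}_K V_n)(X_{K,s}) \leqslant C$ up to $\tau_n$, bounding the right-hand side by $V(x) + (C/c)(e^{ct}-1)$.

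Finally, conservativeness of $X_K$ (Theorem \ref{thm:characterization of affine processes}, together with condition (iv) of Definition \ref{def:admissible parameters}) gives $\tau_n \uparrow \infty$ almost surely. Fatou's lemma applied to the nonnegative left-hand side then yields
\[
\mathbb{E}_x\bigl[e^{ct} V(X_{K,t})\bigr] \leqslant \liminf_{n\to\infty} \mathbb{E}_x\bigl[e^{c(t\wedge\tau_n)} V_n(X_{K,t\wedge\tau_n})\bigr] \leqslant V(x) + \frac{C}{c}(e^{ct}-1),
\]
and the conclusion follows. The main obstacle is exactly this passage from compactly supported test functions to the unbounded Lyapunov $V$: one must verify that the truncation error from $\chi_n$ inside the nonlocal term $\mathcal{J}_K$ does not spoil the drift inequality. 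This is manageable because $\mu_{K,i}$ has bounded support, $V$ is at most linearly growing, and the uniformity of the constants $c, C$ in $K$ established in Lemma \ref{lem:foster-lyapunov estimate lemma} propagates to the estimate \eqref{eq:sup first moment existence}.
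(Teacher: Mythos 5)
Your proposal is correct in substance, and it reaches the conclusion by a genuinely different route in the key technical step. The paper also works with compactly supported truncations $V_j=\varphi_j V$ and the martingale property of $f(X_{K,t})-f(x)-\int_0^t\mathcal{A}_Kf(X_{K,s})\,\mathrm{d}s$ for $f\in C_c^2(D)$, but it does \emph{not} localize in time: it keeps the full interval $[0,t]$, which forces it to (i) derive a uniform-in-$j$ linear-growth bound $|\mathcal{A}_KV_j(y)|\leqslant c(1+\|y\|)$ via careful estimates on $\nabla\varphi_j$ and $\nabla^2\varphi_j$, and (ii) establish beforehand that $\sup_{t\in[0,T]}\mathbb{E}_x[\|X_{K,t}\|^2]<\infty$ (using the analytic moment formulas for $\psi_K$ from Duffie--Filipovi\'c--Schachermayer), so that dominated convergence applies as $j\to\infty$. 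Your stopping-time localization at $\tau_n$ sidesteps both of these: on $\{s<\tau_n\}$ the truncated generator agrees with $\mathcal{A}_K^{\sharp}V$, the drift inequality of Lemma \ref{lem:foster-lyapunov estimate lemma} applies verbatim, and Fatou's lemma for the nonnegative quantity $\mathrm{e}^{c(t\wedge\tau_n)}V_n(X_{K,t\wedge\tau_n})$ requires no a priori integrability of $V(X_{K,t})$ --- indeed your argument \emph{produces} that integrability as a byproduct, which is arguably cleaner. Two small points to tighten: first, requiring $\chi_n\equiv1$ only on $\{\|x\|\leqslant n\}$ is not enough, since for $\|X_{K,s}\|<n$ the jump integral evaluates $V_n$ at points of norm up to $n+K$; you must take $\chi_n\equiv1$ on $\{\|x\|\leqslant n+K\}$ (or stop at $\|X_{K,t}\|\geqslant n-K$) so that $\mathcal{A}_KV_n=\mathcal{A}_K^{\sharp}V$ on the stopped region --- the phrase ``for $n$ large enough'' does not fix this, as the mismatch persists for every $n$. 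Second, you should say a word on why $\mathcal{A}_KV_n$ is globally bounded (so that the Dynkin/optional-stopping identity is legitimate); this holds because $V_n\in C_c^2(D)$ and $\mu_{K,i}$ has support in $\{\|\xi\|\leqslant K\}$, making the jump term vanish for $\|x\|$ large. With these adjustments your argument is complete and relies on exactly the same external inputs as the paper (the martingale property for $C_c^2$ test functions and the Foster--Lyapunov bound of Lemma \ref{lem:foster-lyapunov estimate lemma}, with constants uniform in $K$).
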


\begin{proof}
Let $x\in D$, $K\geqslant1$ and $T>0$ be fixed. The proof is divided
into three steps.

\textit{Step 1:} We show that
\begin{equation}
\sup_{t\in[0,T]}\mathbb{E}_{x}\left[\|X_{K,t}\|^{2}\right]<\infty.\label{eq1: prop 3.5}
\end{equation}
 Since $\mu_{K,i}$ has compact support, it follows that $\int_{\left\{ \left\Vert \xi\right\Vert >1\right\} }\|\xi\|^{k}\mu_{K,i}(\mathrm{d}\xi)<\infty$
for all $k\in\mathbb{N}$. By \cite[Lemmas 5.3 and 6.5]{MR1994043},
we know that $\psi_{K}\in C^{2}(\mathbb{R}_{+}\times\mathcal{U})$.
Moreover, by \cite[Theorem 2.16]{MR1994043}, we have
\[
\mathbb{E}_{x}\left[\|X_{K,t}\|^{2}\right]=-\sum_{l=1}^{d}\left(\langle x,\partial_{\lambda_{l}}^{2}\psi_{K}(t,\mathrm{i}\lambda)|_{\lambda=0}\rangle+\langle x,\partial_{\lambda_{l}}\psi_{K}(t,\mathrm{i}\lambda)|_{\lambda=0}\rangle^{2}\right),
\]
where the right-hand side is a continuous function in $t\in[0,T]$.
So \eqref{eq1: prop 3.5} follows.

\textit{Step 2:} We show that
\begin{equation}
\sup_{t\in[0,T]}\mathbb{E}_{x}\left[V\left(X_{K,t}\right)\right]<\infty.\label{eq:finiteness of E(V(X_K,s))}
\end{equation}
In fact, \eqref{eq:finiteness of E(V(X_K,s))} follows from \eqref{eq: equiv of V and norm}
and \eqref{eq1: prop 3.5}.

\textit{Step 3:} We show that \eqref{eq:sup first moment existence}
is true. It follows from \cite[Theorem 2.12]{MR1994043} and \cite[Lemma 10.1]{MR1994043}
that
\begin{equation}
f\left(X_{K,t}\right)-f\left(X_{K,0}\right)-\int_{0}^{t}\mathcal{A}_{K}f\left(X_{K,s}\right)\mathrm{d}s,\quad t\in\mathbb{R}_{\geqslant0},\label{eq:martingale equation}
\end{equation}
is a $\mathbb{P}_{x}$-martingale for every $f\in C_{c}^{2}(D)$.
Note that $V$ belongs to $C^{2}(D)$ but does not have compact support.
Let $\varphi\in C_{c}^{\infty}(\mathbb{R}_{\geqslant0})$ be such
that $\mathbbm{1}_{[0,1]}\leqslant\varphi\leqslant\mathbbm{1}_{[0,2]}$,
and define $(\varphi_{j})_{j\geqslant1}\subset C_{c}^{\infty}(D)$
by $\varphi_{j}(y):=\varphi(\Vert y\Vert^{2}/j^{2})$. Then
\[
\varphi_{j}(y)=1\quad\text{for }\Vert y\Vert\leqslant j\quad\text{and}\quad\varphi_{j}(y)=0\quad\text{for }\Vert y\Vert>\sqrt{2}j,
\]
and $\varphi_{j}\to1$ as $j\to\infty$. For $j\in\mathbb{N}$, we
then define
\[
V_{j}(y):=V(y)\varphi_{j}(y),\quad y\in D.
\]
So $V_{j}\in C_{c}^{2}(D)$. In view of \eqref{eq:martingale equation}
and \cite[Chap.4, Lemma 3.2]{MR838085}, it follows that
\[
\mathrm{e}^{ct}V_{j}\left(X_{K,t}\right)-V_{j}\left(X_{K,0}\right)-\int_{0}^{t}\mathrm{e}^{cs}\mathcal{A}_{K}V_{j}\left(X_{K,s}\right)\mathrm{d}s-\int_{0}^{t}c\mathrm{e}^{cs}V_{j}\left(X_{K,s}\right)\mathrm{d}s,\quad t\in\mathbb{R}_{\geqslant0},
\]
is a $\mathbb{P}_{x}$-martingale, and hence
\begin{align*}
\mathrm{e}^{ct}\mathbb{E}_{x}\left[V_{j}\left(X_{K,t}\right)\right]-V_{j}\left(x\right) & =\mathbb{E}_{x}\left[\int_{0}^{t}\mathrm{e}^{cs}\left(\mathcal{A}_{K}V_{j}\left(X_{K,s}\right)+cV_{j}\left(X_{K,s}\right)\right)\mathrm{d}s\right].
\end{align*}
Now, a simple calculation shows
\[
\Vert\nabla\varphi_{j}(y)\Vert\leqslant\frac{2\Vert y\Vert}{j^{2}}\Vert\varphi'\Vert_{\infty}\leqslant\frac{2c_{1}\Vert y\Vert}{j^{2}},
\]
for some constant $c_{1}>0$. Therefore, by \eqref{eq: equiv of V and norm},
we get
\begin{align}
\Vert\nabla V_{j}(y)\Vert & =\mathbbm{1}_{\lbrace\Vert y\Vert\leqslant\sqrt{2}j\rbrace}\Vert\varphi_{j}(y)\nabla V(y)+V(y)\nabla\varphi_{j}(y)\Vert\nonumber \\
 & \leqslant\mathbbm{1}_{\lbrace\Vert y\Vert\leqslant\sqrt{2}j\rbrace}\left(\Vert\nabla V\Vert_{\infty}+c_{2}\left(1+\Vert y\Vert\right)\frac{2c_{1}\Vert y\Vert}{j^{2}}\right)\nonumber \\
 & \leqslant c_{3}\frac{\left(1+j\right)j}{j^{2}},\label{eq:gradient estimation for V_n}
\end{align}
where $c_{2}$ and $c_{3}$ are positive constants. A similar calculation
yields that there exists a constant $c_{4}>0$ such that
\begin{align*}
\left\Vert \nabla^{2}\varphi_{j}(y)\right\Vert  & \leqslant c_{4}\frac{\Vert y\Vert^{2}+j^{2}}{j^{4}}.
\end{align*}
So
\begin{align}
\Vert\nabla^{2}V_{j}(y)\Vert & \leqslant\mathbbm{1}_{\lbrace\Vert y\Vert\leqslant\sqrt{2}j\rbrace}\left(\Vert\nabla^{2}V\Vert_{\infty}+2\Vert\nabla V\Vert_{\infty}\Vert\nabla\varphi_{j}(y)\Vert+\Vert V(y)\Vert\Vert\nabla^{2}\varphi_{j}(y)\Vert\right)\nonumber \\
 & \leqslant\mathbbm{1}_{\lbrace\Vert y\Vert\leqslant\sqrt{2}j\rbrace}\left(c_{5}+\frac{c_{6}\Vert y\Vert}{j^{2}}+c_{7}(1+\Vert y\Vert)\frac{\Vert y\Vert^{2}+j^{2}}{j^{4}}\right)\nonumber \\
 & \leqslant c_{8}\frac{1+j+j^{2}}{j^{2}},\label{eq:gradient square estimation for V_n}
\end{align}
where $c_{5},\thinspace c_{6},\thinspace c_{7},\thinspace c_{8}>0$
are constants. Define $\mathcal{D}V_{j}$ and $\mathcal{J}_{K}V_{j}$
similarly as in \eqref{eq: defi DV} and \eqref{eq: defin J_KV},
respectively. It holds obviously that
\[
|\mathcal{D}V_{j}(y)|\leqslant c_{9}\|y\|\left(\Vert\nabla V_{j}\|_{\infty}+\Vert\nabla^{2}V_{j}\|_{\infty}\right),\quad y\in D.
\]
Similarly as in \eqref{eq: esti J*_k,V} and \eqref{eq: esti J_k_0,iV},
we have that for all $y\in D$,
\begin{align*}
|\mathcal{J}_{K}V_{j}(y)| & \leqslant c_{10}\|y\|\sum_{i=1}^{m}\Big(\Vert\nabla V_{j}\|_{\infty}\int_{\{\Vert\xi\Vert\geqslant1\}}\left\Vert \xi\right\Vert \mu_{i}\left(\mathrm{d}\xi\right)\\
 & \qquad\quad+\Vert\nabla^{2}V_{j}\|_{\infty}\int_{\{0<\Vert\xi\Vert<1\}}\left\Vert \xi\right\Vert ^{2}\mu_{i}\left(\mathrm{d}\xi\right)\Big).
\end{align*}
Using \eqref{eq:gradient estimation for V_n}, \eqref{eq:gradient square estimation for V_n}
and the above estimates for $\mathcal{D}V_{j}$ and $\mathcal{J}_{K}V_{j}$,
we obtain
\begin{equation}
\vert\mathcal{A}_{K}V_{j}(y)\vert\leqslant c_{11}(1+\|y\|),\quad y\in D,\label{eq: esti A_KV_J(y)}
\end{equation}
where $c_{11}>0$ is a constant not depending on $j$. The dominated
convergence theorem implies $\lim_{j\to\infty}\mathcal{A}_{K}V_{j}(y)=\mathcal{A}_{K}^{\sharp}V(y)$
for all $y\in D$. By \eqref{eq:finiteness of E(V(X_K,s))}, \eqref{eq: esti A_KV_J(y)}
and again dominated convergence, it follows that
\[
\mathrm{e}^{ct}\mathbb{E}_{x}\left[V\left(X_{K,t}\right)\right]-V\left(x\right)=\mathbb{E}_{x}\left[\int_{0}^{t}\mathrm{e}^{cs}\left(\mathcal{A}_{K}^{\sharp}V\left(X_{K,s}\right)+cV\left(X_{K,s}\right)\right)\mathrm{d}s\right].
\]
Applying Lemma \ref{lem:foster-lyapunov estimate lemma} yields
\[
\mathrm{e}^{ct}\mathbb{E}_{x}\left[V\left(X_{K,t}\right)\right]-V\left(x\right)\leqslant\mathbb{E}_{x}\left[\int_{0}^{t}\mathrm{e}^{cs}C\mathrm{d}s\right]\leqslant c^{-1}C\mathrm{e}^{ct},
\]
which implies
\[
\mathbb{E}_{x}\left[V\left(X_{K,t}\right)\right]\leqslant\mathrm{e}^{-ct}V(x)+c^{-1},\quad\mbox{for}t\in[0,T].
\]
Since $x\in D$, $K\geqslant1$ and $T>0$ are arbitrary, the assertion
follows.
\end{proof}
Arguing similarly as in Lemma \ref{lem:foster-lyapunov estimate lemma}
and Proposition \ref{prop:finiteness of sup expectation}, we obtain
also an analog result for the case where $m\geqslant1$ and $n=0$.
\begin{prop}
\label{lem: case m>0 and n=00003D0}Assume $m\geqslant1$ and $n=0$.
Suppose that $\beta\in\mathbb{M}_{d}^{-}$. Let $V\in C^{2}(D,\mathbb{R})$
be such that $V>0$ on $D$ and
\[
V(x)=\langle x,x\rangle_{I}^{1/2},\quad\text{whenever }\Vert x\Vert>2.
\]
Then $\mathcal{A}_{K}^{\sharp}V$ is well-defined and there exist
positive constants $c$ and $C$, independent of $K$, such that
\[
\mathcal{A}_{K}^{\sharp}V(x)\leqslant-cV(x)+C,\quad\forall\ensuremath{x\in D.}
\]
 Moreover, for all $K\geqslant1$, $t\geqslant0$ and $x\in D$, it
holds that
\[
\mathbb{E}_{x}\left[V\left(X_{K,t}\right)\right]\leqslant\mathrm{e}^{-ct}V(x)+c^{-1}C.
\]
\end{prop}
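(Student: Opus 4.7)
The plan is to follow, almost verbatim, the proof scheme of Lemma \ref{lem:foster-lyapunov estimate lemma} together with Proposition \ref{prop:finiteness of sup expectation}, noting that the case $n=0$ is strictly simpler because there is no $\mathbb{R}^{n}$-component, hence no $\langle\cdot,\cdot\rangle_{J}$ inner product, no parameter $\varepsilon$, and no cross term $\langle\beta_{JI}y,\varepsilon M_{2}z\rangle$ in the drift. First I would fix a smooth extension $V\in C^{\infty}(D,\mathbb{R})$ with $V>0$ on $D$ that agrees with $\widetilde{V}(x):=\langle x,x\rangle_{I}^{1/2}$ for $\Vert x\Vert>2$, using the extension lemma for smooth functions as in Lemma \ref{lem:foster-lyapunov estimate lemma}. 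For $\Vert x\Vert>2$ we have $\nabla V(x)=V(x)^{-1}M_{1}x$ and $\nabla^{2}V(x)=V(x)^{-1}M_{1}-V(x)^{-3}(M_{1}x)(M_{1}x)^{\top}$, so $\nabla V$ and $\nabla^{2}V$ are both bounded on $D$; in particular $\mathcal{A}_{K}^{\sharp}V$ is well-defined by the usual small/big jump splitting combined with admissibility of $\mu$.

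For the drift, since $\beta=\beta_{II}$ when $n=0$, the key identity $M_{1}\beta_{II}+\beta_{II}^{\top}M_{1}=-I_{m}$ (which follows from $\beta_{II}\in\mathbb{M}_{m}^{-}$ exactly as in Lemma \ref{lem:foster-lyapunov estimate lemma}) yields
\[
\langle\beta x,\nabla V(x)\rangle=V(x)^{-1}\langle\beta_{II}x,M_{1}x\rangle=-\tfrac{1}{2}V(x)^{-1}x^{\top}x\leqslant-c_{1}V(x),
\]
by equivalence of norms on $\mathbb{R}^{m}$. There is no cross-term and no need to choose a small $\varepsilon$. The diffusion term is controlled using $\left|\partial_{k}\partial_{l}V(x)\right|\leqslant c/V(x)$ together with $\sum_{i\in I}\alpha_{i,kl}x_{i}\leqslant c'\Vert x\Vert$ and $\Vert x\Vert/V(x)$ being bounded, giving uniform boundedness of the diffusion part. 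The jump part is split into big jumps ($\Vert\xi\Vert\geqslant k_{0}$) and small jumps ($0<\Vert\xi\Vert<k_{0}$), exactly as in Lemma \ref{lem:foster-lyapunov estimate lemma}: big jumps are controlled by the linear moment $\int\Vert\xi\Vert\mu_{i}(\mathrm{d}\xi)<\infty$ from admissibility (condition (iv) in Definition \ref{def:admissible parameters}) and $\Vert\nabla V\Vert_{\infty}<\infty$, with $k_{0}$ chosen large so that the contribution is dominated by $\tfrac{1}{2}c_{1}V$; small jumps are controlled by the $\Vert\xi\Vert^{2}$ moment together with the bound $\left|\partial_{k}\partial_{l}V\right|\leqslant c/V$ via the mean value theorem. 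Combining all three contributions yields
\[
\mathcal{A}_{K}^{\sharp}V(x)\leqslant-cV(x)+C,\qquad x\in D,
\]
with constants $c,C>0$ independent of $K$, since the bounds on $\mu_{K,i}\leqslant\mu_{i}$ are.

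For the Foster--Lyapunov inequality on expectations, I would repeat the three-step argument of Proposition \ref{prop:finiteness of sup expectation}. Step 1 shows $\sup_{t\in[0,T]}\mathbb{E}_{x}[\Vert X_{K,t}\Vert^{2}]<\infty$ by differentiating $\psi_{K}(t,\mathrm{i}\lambda)$ twice in $\lambda$ at $0$, which is legitimate because $\mu_{K,i}$ has compact support (so all moments exist); Step 2 transfers this finiteness to $\mathbb{E}_{x}[V(X_{K,t})]$ by $V(x)\leqslant c_{1}\Vert x\Vert+c_{2}$. Step 3 uses the martingale property of $f(X_{K,t})-f(X_{K,0})-\int_{0}^{t}\mathcal{A}_{K}f(X_{K,s})\mathrm{d}s$ for $f\in C_{c}^{2}(D)$, applied to a cutoff $V_{j}:=V\varphi_{j}$ with $\varphi_{j}(y)=\varphi(\Vert y\Vert^{2}/j^{2})$, combined with the product rule to incorporate the factor $\mathrm{e}^{cs}$ (as in Ethier--Kurtz, cited in the proof of Proposition \ref{prop:finiteness of sup expectation}). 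Letting $j\to\infty$ via dominated convergence, using $\vert\mathcal{A}_{K}V_{j}(y)\vert\leqslant c(1+\Vert y\Vert)$ uniformly in $j$ and the Foster--Lyapunov bound just established, produces
\[
\mathrm{e}^{ct}\mathbb{E}_{x}[V(X_{K,t})]-V(x)=\mathbb{E}_{x}\!\left[\int_{0}^{t}\mathrm{e}^{cs}\bigl(\mathcal{A}_{K}^{\sharp}V(X_{K,s})+cV(X_{K,s})\bigr)\mathrm{d}s\right]\leqslant c^{-1}C\mathrm{e}^{ct},
\]
which gives the required estimate.

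I do not anticipate any genuine obstacle: every step of Lemma \ref{lem:foster-lyapunov estimate lemma} and Proposition \ref{prop:finiteness of sup expectation} transfers to the $n=0$ setting with strict simplifications, and the $K$-independence of all constants is inherited from the bound $\mu_{K,i}\leqslant\mu_{i}$. The one place where mild care is needed is verifying that $\mathcal{A}_{K}^{\sharp}V$ is genuinely well-defined on all of $D$ (not only for $\Vert x\Vert>2$), but this follows since $V$ is smooth with bounded first and second derivatives, so the integrability requirement in the definition of $\mathcal{A}_{K}^{\sharp}$ is met via the admissibility of $\mu_{K,i}$.
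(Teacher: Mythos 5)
Your proposal is correct and follows exactly the route the paper intends: the paper gives no separate proof of this proposition, stating only that one argues as in Lemma \ref{lem:foster-lyapunov estimate lemma} and Proposition \ref{prop:finiteness of sup expectation}, and your write-up carries out precisely that adaptation, correctly identifying the simplifications for $n=0$ (no $\varepsilon$, no cross term, $\beta=\beta_{II}$) and the $K$-independence via $\mu_{K,i}\leqslant\mu_{i}$.
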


We are now ready to prove the uniform boundedness for the first moment
of $X_{t}$, $t\geqslant0$.
\begin{prop}
\label{thm:uniform boundedness for the first moment of X_t}Let $X$
be an affine process satisfying \emph{(\ref{eq: assum sect. 3})}.
Suppose that $\beta\in\mathbb{M}_{d}^{-}$. Then
\begin{equation}
\sup_{t\geqslant0}\mathbb{E}_{x}\left[\left\Vert X_{t}\right\Vert \right]<\infty\quad\mbox{for all \ensuremath{x\in D.}}\label{eq: to prove, prop. 3.7}
\end{equation}
\end{prop}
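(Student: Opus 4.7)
The strategy is to deduce the bound for $X$ from the corresponding bound for its approximations $(X_{K,t})_{t\geqslant 0}$ and then to let $K\to\infty$ via Lemma \ref{lem:approximation lemma}. I distinguish the cases $m\geqslant 1$ and $m=0$.

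Suppose first that $m\geqslant 1$. Depending on whether $n\geqslant 1$ or $n=0$, Proposition \ref{prop:finiteness of sup expectation} or Proposition \ref{lem: case m>0 and n=00003D0} provides a Lyapunov function $V\in C^2(D,\mathbb{R})$ with $V>0$ on $D$ and positive constants $c,\thinspace C$, independent of $K$, such that
\[
\mathbb{E}_x\bigl[V(X_{K,t})\bigr]\leqslant \mathrm{e}^{-ct}V(x)+c^{-1}C,\quad K\geqslant 1,\ t\geqslant 0.
\]
Combined with the linear equivalence \eqref{eq: equiv of V and norm} between $V$ and $\|\cdot\|$, this gives
\[
\sup_{K\geqslant 1}\sup_{t\geqslant 0}\mathbb{E}_x\bigl[\|X_{K,t}\|\bigr]\leqslant c_3\bigl(V(x)+c^{-1}C\bigr)+c_4<\infty.
\]

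To pass this bound to $X$, I fix $t\geqslant 0$. Since $\phi\equiv 0$ under (\ref{eq: assum sect. 3}), Lemma \ref{lem:approximation lemma} yields
\[
\mathbb{E}_x\bigl[\mathrm{e}^{\langle u,X_{K,t}\rangle}\bigr]=\exp\{\langle x,\psi_K(t,u)\rangle\}\longrightarrow\exp\{\langle x,\psi(t,u)\rangle\}=\mathbb{E}_x\bigl[\mathrm{e}^{\langle u,X_t\rangle}\bigr]\quad\text{as }K\to\infty,
\]
pointwise in $u\in\mathrm{i}\mathbb{R}^d$, and the right-hand side is continuous at $u=0$. By L\'evy's continuity theorem, $X_{K,t}$ converges in distribution to $X_t$ as $K\to\infty$. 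Since $\|\cdot\|$ is continuous and nonnegative, the portmanteau theorem (Fatou's lemma for weak convergence) gives
\[
\mathbb{E}_x\bigl[\|X_t\|\bigr]\leqslant \liminf_{K\to\infty}\mathbb{E}_x\bigl[\|X_{K,t}\|\bigr],
\]
and taking the supremum over $t\geqslant 0$ of both sides yields \eqref{eq: to prove, prop. 3.7}.

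Finally, if $m=0$ then $I=\emptyset$, so (\ref{eq: assum sect. 3}) together with Definition \ref{def:admissible parameters} forces $\alpha$ and $\mu$ to be trivial as well. By Theorem \ref{thm:characterization of affine processes} the generator reduces to $\mathcal{A}f(x)=\langle\beta x,\nabla f(x)\rangle$, whence $X_t=\mathrm{e}^{\beta t}X_0$ is deterministic; since $\beta\in\mathbb{M}_d^-$, $\sup_{t\geqslant 0}\|\mathrm{e}^{\beta t}\|<\infty$ and the conclusion is immediate. The only delicate point in the above argument is that the Lyapunov bound for the approximations must be uniform in $K$, which however is already built into the statements of Propositions \ref{prop:finiteness of sup expectation} and \ref{lem: case m>0 and n=00003D0}.
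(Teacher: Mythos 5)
Your proof is correct and follows essentially the same route as the paper: the uniform-in-$K$ Lyapunov bound from Propositions \ref{prop:finiteness of sup expectation} and \ref{lem: case m>0 and n=00003D0}, passage to the limit $K\to\infty$ via Lemma \ref{lem:approximation lemma} together with a Fatou-type argument, and a separate treatment of the deterministic case $m=0$. The only cosmetic difference is that you use the portmanteau (lower-semicontinuity) form of Fatou's lemma for weak convergence applied directly to $\Vert\cdot\Vert$, whereas the paper invokes Skorokhod's representation theorem and then the ordinary Fatou lemma applied to $V$; the two are interchangeable.
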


\begin{proof}
If $m=0$ and $n\geqslant1$, then $(X_{t})_{t\geqslant0}$ degenerates
to a deterministic motion governed by the vector field $x\mapsto\beta x$.
In this case we have
\[
X_{t}=\mathrm{e}^{\beta t}X_{0},
\]
so \eqref{eq: to prove, prop. 3.7} follows from the assumption that
$\beta\in\mathbb{M}_{d}^{-}$.

For the case where $m\geqslant1$, by Propositions \ref{prop:finiteness of sup expectation}
and \ref{lem: case m>0 and n=00003D0}, we have
\begin{equation}
\mathbb{E}_{x}\left[V\left(X_{K,t}\right)\right]\leqslant\mathrm{e}^{-ct}V(x)+c^{-1}C,\quad\text{for all }K\geqslant1,x\in D\ \mbox{and }t\in\mathbb{R}_{\geqslant0},\label{eq 2: prop. 3.7}
\end{equation}
where $c,\thinspace C>0$ are constants not depending on $K$.

Let $x\in D$ be fixed and assume without loss of generality that
$X_{0}=x$ a.s. In view of Lemma \ref{lem:approximation lemma} and
Skorokhod's representation theorem (see, e.g., \cite[Chap.3, Theorem 1.8]{MR838085}),
there exist some probability space $(\widetilde{\Omega},\widetilde{\mathcal{F}},\widetilde{\mathbb{P}})$
on which $(\widetilde{X}{}_{K,t})_{K\geqslant1}$ and $\widetilde{X}{}_{t}$
are defined such that $\widetilde{X}{}_{K,t}$ and $\widetilde{X}{}_{t}$
have the same distributions as $X_{K,t}$ and $X_{t}$, respectively,
and $\widetilde{X}{}_{K,t}\to\widetilde{X}{}_{t}$ $\tilde{\mathbb{P}}$-almost
surely as $K\to\infty$. Hence $V(\widetilde{X}{}_{K,t})\to V(\widetilde{X}{}_{t})$
$\mathbb{\tilde{P}}$-almost surely as $K\to\infty$.\textcolor{red}{{}
}By \eqref{eq 2: prop. 3.7} and Fatou's lemma, we have
\begin{align*}
\mathbb{E}_{x}\left[V\left(X_{t}\right)\right]=\widetilde{\mathbb{E}}\left[V\left(\widetilde{X}{}_{t}\right)\right] & \leqslant\liminf_{K\to\infty}\widetilde{\mathbb{E}}\left[V\left(\widetilde{X}{}_{K,t}\right)\right]\\
 & =\liminf_{K\to\infty}\mathbb{E}_{x}\left[V\left(X_{K,t}\right)\right]\\
 & \leqslant\mathrm{e}^{-ct}V\left(x\right)+c^{-1}C
\end{align*}
for all $t\geqslant0$. By \eqref{eq: equiv of V and norm}, the assertion
follows.
\end{proof}

\subsection{Exponential convergence of \textcolor{black}{$\psi(t,u)$ to zero}}

\textcolor{black}{In this subsection we study the convergence speed
of }$\psi(t,u)\to0$ as $t\to\infty$.
\begin{lem}
\label{prop:convergence of psi in a neighborhood of zero}Suppose
that $\beta\in\mathbb{M}_{d}^{-}$. There exist $\delta>0$ and constants
$C_{1},C_{2}>0$ such that for all $u\in\mathcal{U}$ with $\|u\|<\delta$,
\begin{equation}
\left\Vert \psi\left(t,u\right)\right\Vert \leqslant C_{1}\exp\left\{ -C_{2}t\right\} ,\quad t\geqslant0.\label{eq0: lemma 3.8}
\end{equation}
\end{lem}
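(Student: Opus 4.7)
The plan is to recognize that the full function $\psi(t,u) = (\psi^I(t,u), \psi^J(t,u))$ solves an \emph{autonomous} ODE on $\mathcal{U}$, and then invoke the principle of linearized stability. Indeed, combining \eqref{eq:riccati equation for psi ell} with the fact that \eqref{eq:riccati equation for psi J} is equivalent to the autonomous ODE $\partial_t \psi^J = \beta_{JJ}^\top \psi^J$, the function $\psi(\cdot,u)$ is the unique solution of
\[
\partial_t \psi(t,u) = G(\psi(t,u)), \qquad \psi(0,u) = u,
\]
where $G : \mathcal{U} \to \mathbb{C}^d$ is defined by $G(v,w) := \bigl(R^I(v,w),\,\beta_{JJ}^\top w\bigr)$ for $(v,w) \in \mathbb{C}_{\leqslant0}^m \times \mathrm{i}\mathbb{R}^n$. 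By Lemma \ref{lem:space-differentiability of F R phi psi}, $G$ is $C^1$ on $\mathcal{U}$.

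Next I compute the linearization of $G$ at the origin. From the explicit formula for $R_i$, and using the admissibility condition on $\mu_i$ to differentiate under the integral sign, one checks that $R_i(0) = 0$ and
\[
\left.\frac{\partial R_i(u)}{\partial u_k}\right|_{u=0} = \beta_{ki}, \qquad k = 1,\ldots,d,\; i \in I,
\]
so that the Jacobian of $G$ at $0$, relative to the block decomposition $u = (v,w)$ and the real structure on $\mathcal{U}$, is precisely $\beta^\top$. Since $\beta \in \mathbb{M}_d^-$, the matrix $\beta^\top$ also has all eigenvalues with strictly negative real parts; denote by $-\lambda < 0$ any number strictly greater than all of these real parts, so that $\|\mathrm{e}^{t\beta^\top}\| \leqslant M\mathrm{e}^{-\lambda t}$ for some $M \geqslant 1$.

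The principle of linearized (Lyapunov) stability for autonomous ODEs now gives the conclusion. More concretely, writing $G(\psi) = \beta^\top \psi + N(\psi)$ with $N$ of class $C^1$, $N(0) = 0$ and $DN(0) = 0$, the variation-of-constants formula reads
\[
\psi(t,u) = \mathrm{e}^{t\beta^\top} u + \int_0^t \mathrm{e}^{(t-s)\beta^\top} N(\psi(s,u))\,\mathrm{d}s.
\]
Fix any $C_2 \in (0,\lambda)$. Choose $r > 0$ small enough that $M \|N(\psi)\| \leqslant (\lambda - C_2)\|\psi\|$ for $\|\psi\| \leqslant r$, and then $\delta > 0$ small enough that $2M\delta \leqslant r$. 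For $\|u\| < \delta$, a standard bootstrap argument based on Gronwall's inequality and the continuity of $t \mapsto \psi(t,u)$ shows that $\|\psi(t,u)\| \leqslant 2M\mathrm{e}^{-C_2 t}\|u\|$ for all $t \geqslant 0$, which is \eqref{eq0: lemma 3.8} with $C_1 := 2M\delta$.

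The main technical point is that $\mathcal{U}$ is not an open subset of $\mathbb{C}^d \cong \mathbb{R}^{2d}$; it is the product of the closed half-space $\mathbb{C}_{\leqslant0}^m$ and the subspace $\mathrm{i}\mathbb{R}^n$. However, $\mathcal{U}$ is forward-invariant under the flow of $G$ (this is built into the very definition of an affine process via \eqref{eq:affine representation}), and the origin is an interior point of $\mathcal{U}$ \emph{within} the real affine span $\mathbb{R}^m \oplus \mathrm{i}\mathbb{R}^m \oplus \mathrm{i}\mathbb{R}^n$ of $\mathcal{U}$ restricted by $\operatorname{Re} v \leqslant 0$ only along the coordinates $v$. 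Either extending $G$ to a $C^1$ vector field on an open neighborhood of $0$ in this ambient real vector space (which is standard since $G$ is $C^1$) and then invoking the classical linearized stability theorem, or carrying out the above variation-of-constants/Gronwall estimate directly inside $\mathcal{U}$, circumvents this difficulty. The latter route is self-contained and is the one I would write out in detail.
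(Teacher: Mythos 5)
Your proposal is correct and follows essentially the same route as the paper: the paper likewise recasts the generalized Riccati equations as an autonomous ODE (after splitting into real and imaginary parts, so the state space is $\mathbb{R}_{\leqslant0}^{m}\times\mathbb{R}^{m+n}$), identifies the Jacobian at the origin as an upper-triangular realification of $\beta^{\top}$ with spectrum in the open left half-plane, and invokes the linearized stability theorem, whose proof is exactly your variation-of-constants/Gronwall bootstrap. Your explicit attention to the non-openness of $\mathcal{U}$ and the forward invariance under the flow is a point the paper passes over more quickly, but the substance is the same.
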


\begin{proof}
For $u\in\mathcal{U}$, we can write $u=(v,w)\in\mathbb{C}_{\leqslant0}^{m}\times\mathrm{i}\mathbb{R}^{n}$
and further $v=x+\mathrm{i}y$ and $w=\mathrm{i}z$, where $x\in\mathbb{R}_{\leqslant0}^{m}$,
$y\in\mathbb{R}^{m}$ and $z\in\mathbb{R}^{n}$. Therefore,
\[
\psi(t,u)=\psi\left(t,v,w\right)=\begin{pmatrix}\psi^{I}\left(t,x+\mathrm{i}y,\mathrm{i}z\right)\\
\mathrm{i}\mathrm{e}^{\beta_{JJ}^{\top}t}z
\end{pmatrix}.
\]
For $x\in\mathbb{R}_{\leqslant0}^{m}$, $y\in\mathbb{R}^{m}$, and
$z\in\mathbb{R}^{n}$, we define
\[
\widetilde{\psi}\left(t,x,y,z\right):=\begin{pmatrix}\mathrm{Re}\thinspace\psi^{I}\left(t,x+\mathrm{i}y,\mathrm{i}z\right)\\
\mathrm{Im}\thinspace\psi^{I}\left(t,x+\mathrm{i}y,\mathrm{i}z\right)\\
\mathrm{e}^{\beta_{JJ}^{\top}t}z
\end{pmatrix}=\begin{pmatrix}\vartheta\\
\eta\\
\zeta
\end{pmatrix},\quad t\geqslant0.
\]
Recall that $\psi^{I}(t,u)$ satisfies the Riccati equation
\[
\partial_{t}\psi^{I}(t,v,w)=R^{I}\left(\psi^{I}(t,v,w),\mathrm{e}^{\beta_{JJ}^{\top}t}w\right),\quad\text{ }\psi^{I}(0,v,w)=v.
\]
So
\begin{align*}
\partial_{t}\widetilde{\psi}(t,x,y,z) & =\begin{pmatrix}\partial_{t}\mathrm{Re}\thinspace\psi^{I}\left(t,x+\mathrm{i}y,\mathrm{i}z\right)\\
\partial_{t}\mathrm{Im}\thinspace\psi^{I}\left(t,x+\mathrm{i}y,\mathrm{i}z\right)\\
\partial_{t}\mathrm{e}^{\beta_{JJ}^{\top}t}z
\end{pmatrix}\\
 & =\begin{pmatrix}\mathrm{Re}\thinspace R^{I}\left(\psi^{I}\left(t,x+\mathrm{i}y,\mathrm{i}z\right),\mathrm{i}\mathrm{e}^{\beta_{JJ}^{\top}t}z\right)\\
\mathrm{Im}\thinspace R^{I}\left(\psi^{I}\left(t,x+\mathrm{i}y,\mathrm{i}z\right),\mathrm{i}\mathrm{e}^{\beta_{JJ}^{\top}t}z\right)\\
\beta_{JJ}^{\top}\mathrm{e}^{\beta_{JJ}^{\top}t}z
\end{pmatrix}\\
 & =\begin{pmatrix}\mathrm{Re}\thinspace R^{I}\left(\mathrm{Re}\thinspace\psi^{I}\left(t,x+\mathrm{i}y,\mathrm{i}z\right)+\mathrm{i}\mathrm{Im}\thinspace\psi^{I}\left(t,x+\mathrm{i}y,\mathrm{i}z\right),\mathrm{i}\mathrm{e}^{\beta_{JJ}^{\top}t}z\right)\\
\mathrm{Im}\thinspace R^{I}\left(\mathrm{Re}\thinspace\psi^{I}\left(t,x+\mathrm{i}y,\mathrm{i}z\right)+\mathrm{i}\mathrm{Im}\thinspace\psi^{I}\left(t,x+\mathrm{i}y,\mathrm{i}z\right),\mathrm{i}\mathrm{e}^{\beta_{JJ}^{\top}t}z\right)\\
\beta_{JJ}^{\top}\mathrm{e}^{\beta_{JJ}^{\top}t}z
\end{pmatrix}\\
 & =\begin{pmatrix}\mathrm{Re}\thinspace R^{I}\left(\vartheta+\mathrm{i}\eta,\mathrm{i}\zeta\right)\\
\mathrm{Im}\thinspace R^{I}\left(\vartheta+\mathrm{i}\eta,\mathrm{i}\zeta\right)\\
\beta_{JJ}^{\top}\zeta
\end{pmatrix}\\
 & =:\widetilde{R}\left(\vartheta,\eta,\zeta\right),
\end{align*}
where the map $\mathbb{R}_{\leqslant0}^{m}\times\mathbb{R}^{m}\times\mathbb{R}^{n}\ni(\vartheta,\eta,\zeta)\mapsto\widetilde{R}\left(\vartheta,\eta,\zeta\right)$
is $C^{1}$ by \cite[Lemma 5.3]{MR1994043}. Hence $\widetilde{\psi}(t,x,y,z)$
solves the equation
\begin{equation}
\partial_{t}\widetilde{\psi}(t,x,y,z)=\widetilde{R}\left(\widetilde{\psi}(t,x,y,z)\right),\quad t\geqslant0,\quad\psi(0,x,y,z)=(x,y,z).\label{eq:riccati equation for transformed psi}
\end{equation}
Similarly to \cite[p.1011, (6.7)]{MR1994043}, we have, for $u=(x+\mathrm{i}y,\mathrm{i}z),$
\begin{align}
\mathrm{Re}\thinspace R_{i}\left(x+\mathrm{i}y,\mathrm{i}z\right) & =\alpha_{i,ii}x_{i}^{2}-\langle\alpha_{i}\mathrm{Im}\thinspace u,\mathrm{Im}\thinspace u\rangle+\sum_{k=1}^{m}\beta_{ki}x_{k}\nonumber \\
 & \quad+\int_{D\backslash\lbrace0\rbrace}\left(\mathrm{e}^{\langle\xi_{I},x\rangle}\cos\langle\mathrm{Im}\thinspace u,\xi\rangle-1-\langle\xi_{I},x\rangle\right)\mu_{i}\left(\mathrm{d}\xi\right)\label{eq: form of real part of R_i}
\end{align}
and
\begin{align}
\mathrm{Im}\ R_{i}\left(x+\mathrm{i}y,\mathrm{i}z\right) & =2\alpha_{i,ii}x_{i}y_{i}+\langle\beta_{Ii},y\rangle+\langle\beta_{Ji},z\rangle\nonumber \\
 & \quad+\int_{D\backslash\lbrace0\rbrace}\left(\mathrm{e}^{\langle\xi_{I},x\rangle}\sin\langle\mathrm{Im}\thinspace u,\xi\rangle-\langle\mathrm{Im}\thinspace u,\xi\rangle\right)\mu_{i}\left(\mathrm{d}\xi\right).\label{eq: form of imag part of R_i}
\end{align}
Since $\widetilde{R}:\mathbb{R}_{\leqslant0}^{m}\times\mathbb{R}^{m+n}\to\mathbb{R}^{2m+n}$
is $C^{1}$, so
\begin{align}
 & \left\Vert \widetilde{R}\left(\vartheta,\eta,\zeta\right)-D\widetilde{R}(\mathbf{0})\left(\vartheta,\eta,\zeta\right)^{\top}\right\Vert \nonumber \\
 & \quad\quad\quad\quad=\left\Vert \widetilde{R}\left(\vartheta,\eta,\zeta\right)-\widetilde{R}(\mathbf{0})-D\widetilde{R}(\mathbf{0})\left(\vartheta,\eta,\zeta\right)^{\top}\right\Vert \nonumber \\
 & \quad\quad\quad\quad=\left\Vert \int_{0}^{1}D\widetilde{R}\left(r\left(\vartheta,\eta,\zeta\right)\right)\left(\vartheta,\eta,\zeta\right)^{\top}\mathrm{d}r-\int_{0}^{1}D\widetilde{R}(\mathbf{0})\left(\vartheta,\eta,\zeta\right)^{\top}\mathrm{d}r\right\Vert \nonumber \\
 & \quad\quad\quad\quad\leqslant\sup_{0\leqslant r\leqslant1}\left\Vert D\widetilde{R}\left(r\left(\vartheta,\eta,\zeta\right)\right)-D\widetilde{R}(\mathbf{0})\right\Vert \cdot\left\Vert \left(\vartheta,\eta,\zeta\right){}^{\top}\right\Vert \nonumber \\
 & \quad\quad\quad\quad=o\left(\left\Vert \left(\vartheta,\eta,\zeta\right)^{\top}\right\Vert \right)\label{eq:g convergence}
\end{align}
holds. Here, $D\widetilde{R}(\vartheta,\eta,\zeta)$ denotes the Jacobian,
i.e., the matrix consisting of all first-order partial derivatives
of the vector-valued function $(\vartheta,\eta,\zeta)\mapsto\widetilde{R}(\vartheta,\eta,\zeta)$.
According to \eqref{eq: form of real part of R_i} and \eqref{eq: form of imag part of R_i},
we see that\textcolor{red}{{} ${\normalcolor D\widetilde{R}(\mathbf{0})}$
}is a matrix taking the form

\renewcommand*\arraystretch{1.4}\[
D\widetilde{R}(\mathbf{0})=
\left( \begin{array}{cc;{2pt/2pt}c}
\beta^{\top}_{II} & 0 & 0 \\
0 & \beta^{\top}_{II} & \ast \\ \hdashline[2pt/2pt]
0 & 0 & \beta^{\top}_{JJ}
\end{array} \right)
\]where $\ast$ is a ($m\times n$)-matrix. By the Riccati equation
\eqref{eq:riccati equation for transformed psi} for $\widetilde{\psi}$,
we can write
\[
\partial_{t}\widetilde{\psi}\left(t,x,y,z\right)=D\widetilde{R}(\mathbf{0})\widetilde{\psi}\left(t,x,y,z\right)+\left(\widetilde{R}\left(\widetilde{\psi}(t,x,y,z)\right)-D\widetilde{R}(\mathbf{0})\widetilde{\psi}\left(t,x,y,z\right)\right).
\]
From \eqref{eq:g convergence} it follows that
\[
\lim_{\left\Vert (\vartheta,\eta,\zeta)\right\Vert \to0}\frac{\left\Vert \widetilde{R}\left(\vartheta,\eta,\zeta\right)-D\widetilde{R}(\mathbf{0})\left(\vartheta,\eta,\zeta\right)^{\top}\right\Vert }{\left\Vert (\vartheta,\eta,\zeta)\right\Vert }=0.
\]
By assumption, we know that $\beta_{II}\in\mathbb{M}_{m}^{-}$ and
$\beta_{JJ}\in\mathbb{M}_{n}^{-}$, which ensures $D\widetilde{R}(\mathbf{0})\in\mathbb{M}_{2m+n}^{-}$.
Now, an application of the linearized stability theorem (see, e.g.,
\cite[VII. Stability Theorem, p.311]{MR1629775}) yields that $\widetilde{\psi}$
is asymptotically stable at $\mathbf{0}$. Moreover, as shown in the
proof of \cite[VII. Stability Theorem, p.311]{MR1629775}, we can
find constants $\delta,\thinspace c_{1},\thinspace c_{2}>0$ such
that
\[
\left\Vert \widetilde{\psi}(t,x,y,z)\right\Vert \leqslant c_{1}\mathrm{e}^{-c_{2}t},\quad\forall\text{ }t\geqslant0,\thinspace(x,y,z)\in B_{\delta}(0)\cap\mathbb{R}_{\leqslant0}^{m}\times\mathbb{R}^{m+n},
\]
where $B_{\delta}(0)$ denotes the ball with center $0$ and radius
$\delta$. By the definition of $\widetilde{\psi}$, the latter inequality
implies that \eqref{eq0: lemma 3.8} is true. The lemma is proved.
\end{proof}

Next, we extend the estimate in Lemma \ref{prop:convergence of psi in a neighborhood of zero}
to all $u\in\mathcal{U}$.
\begin{prop}
\label{thm:exponential convergence of psi for all u}Let $X$ be an
affine process satisfying \emph{\eqref{eq: assum sect. 3}}. Suppose
that $\beta\in\mathbb{M}_{d}^{-}$. Then for every $u\in\mathcal{U},$
there exist positive constants $c_{1},c_{2}$, which depend on $u$,
such that
\[
\left\Vert \psi\left(t,u\right)\right\Vert \leqslant c_{1}\exp\left\{ -c_{2}t\right\} ,\quad t\geqslant0.
\]
\end{prop}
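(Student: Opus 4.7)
The idea is to bootstrap the local exponential decay of Lemma~\ref{prop:convergence of psi in a neighborhood of zero} to every $u\in\mathcal{U}$ via the semi-flow identity \eqref{eq:semi flow property}. Once I show that for each $u\in\mathcal{U}$ there exists $T=T(u)\geqslant 0$ with $\|\psi(T,u)\|<\delta$ (where $\delta$ is the constant of Lemma~\ref{prop:convergence of psi in a neighborhood of zero}), the semi-flow gives $\psi(T+s,u)=\psi(s,\psi(T,u))$, and Lemma~\ref{prop:convergence of psi in a neighborhood of zero} applied at the initial datum $\psi(T,u)\in B_\delta(0)$ yields $\|\psi(T+s,u)\|\leqslant C_1\mathrm{e}^{-C_2 s}$ for all $s\geqslant 0$. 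Since $\psi(\cdot,u)$ is continuous, hence bounded, on the compact interval $[0,T]$, adjusting the constants produces the claimed bound on all of $[0,\infty)$ with $u$-dependent constants.

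The task thus reduces to showing that $\psi(t,u)$ enters $B_\delta(0)$, which I will accomplish by a probabilistic argument based on tightness. Proposition~\ref{thm:uniform boundedness for the first moment of X_t} combined with Markov's inequality implies that the family $\{\mathbb{P}_x\circ X_t^{-1}\}_{t\geqslant 0}$ is tight on $D$ for every $x\in D$. Given $u\in\mathcal{U}$ and a sequence $t_n\to\infty$, a diagonal argument over a countable dense subset of $D$ extracts a subsequence along which $X_{t_n}$ converges weakly under $\mathbb{P}_x$ to a probability measure $\nu_x$, for every $x$ in that dense set; equivalently,
\[
\exp\{\langle x,\psi(t_n,u)\rangle\}=\mathbb{E}_x[\mathrm{e}^{\langle u,X_{t_n}\rangle}]\longrightarrow\hat\nu_x(u).
\]
Combining this with the Markov relation $\mathbb{E}_x[\mathrm{e}^{\langle u,X_{t_n+s}\rangle}]=\mathbb{E}_x[\exp\{\langle X_s,\psi(t_n,u)\rangle\}]$, bounded convergence (available since $\psi(t_n,u)\in\mathcal{U}$, so the integrand is dominated by $1$), and the semi-flow, one then shows that $\nu_x$ is invariant under the semigroup $(P_s)$, and that any subsequential limit $v^*$ of $\psi(t_n,u)$ must satisfy $\psi(s,v^*)=v^*$ for all $s\geqslant 0$, i.e.\ $v^*$ is an equilibrium of the Riccati flow. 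Since $\psi^J(t_n,u)=\mathrm{e}^{\beta_{JJ}^{\top}t_n}w\to 0$, one automatically has $v^*_J=0$, so only the reduced equation $R^I(v^*_I,0)=0$ on $\mathbb{C}_{\leqslant 0}^m$ remains to be analysed.

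Under the standing assumption $a=b=m=0$ with $\beta\in\mathbb{M}_d^-$ of this section, the origin is absorbing ($\phi\equiv 0$ forces $\mathbb{E}_0[\mathrm{e}^{\langle u,X_t\rangle}]=1$, hence $X_t\equiv 0$ under $\mathbb{P}_0$); the subcriticality of the branching block, together with the absence of immigration, then makes $\delta_0$ the only invariant probability measure on $D$, so $\nu_x=\delta_0$ and $v^*=0$. The main technical obstacle is precisely this last step, i.e.\ ruling out nontrivial equilibria of $R^I(\cdot,0)$ in $\mathbb{C}_{\leqslant 0}^m$, equivalently ruling out nontrivial invariant laws of the no-immigration affine process: this converts the soft tightness input into the hard statement $\psi(t,u)\to 0$. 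Once that is in hand, $\psi(t_n,u)\to 0$ along every subsequence of $t_n\to\infty$, so $\psi(t,u)\to 0$ as $t\to\infty$, and Lemma~\ref{prop:convergence of psi in a neighborhood of zero} combined with the semi-flow finishes the proof.
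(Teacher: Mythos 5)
Your overall architecture matches the paper's: tightness of $\{X_t\}$ from Proposition \ref{thm:uniform boundedness for the first moment of X_t}, then the semi-flow property together with Lemma \ref{prop:convergence of psi in a neighborhood of zero} to upgrade $\psi(t,u)\to 0$ to exponential decay; your final bootstrap paragraph is correct. The gap is in the middle, and you flag it yourself: you reduce everything to ``ruling out nontrivial equilibria of $R^I(\cdot,0)$ in $\mathbb{C}_{\leqslant 0}^m$, equivalently ruling out nontrivial invariant laws,'' call this ``the main technical obstacle,'' and then write ``once that is in hand'' without supplying any argument. That step is not a formality --- it is precisely the hard content of the proposition --- and the route you sketch toward it has its own unproved links: a subsequential weak limit of $P_{t_n}(x,\cdot)$ along a single sequence $t_n\to\infty$ is not automatically an invariant measure (that would require Krylov--Bogoliubov averaging or control of $P_{t_n+s}(x,\cdot)$ for all $s$); the fixed-point identity $\psi(s,v^*)=v^*$ for an accumulation point $v^*$ of $\psi(t_n,u)$ needs $\psi(t_n+s,u)\to v^*$ as well, which does not follow from $\psi(t_n,u)\to v^*$; and you never establish that $t\mapsto\psi(t,u)$ is bounded, so accumulation points need not exist in the first place.

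The paper closes this gap by using Lemma \ref{prop:convergence of psi in a neighborhood of zero} a second time, at the level of the limit law rather than only in the final bootstrap: since $\psi(t,u)\to0$ for $\|u\|<\delta$, the characteristic function of any subsequential weak limit $X^{a}$ of $X_t$ equals $1$ on $B_\delta(0)\cap\mathcal{U}$, and an elementary cosine argument shows that a characteristic function identically $1$ in a neighborhood of the origin forces $X^{a}=0$ almost surely. Hence $X_t\Rightarrow\delta_0$ and $\exp\{\langle x,\psi(t,u)\rangle\}\to1$ for every $x\in D$ and $u\in\mathcal{U}$. Even then one is not done: this only pins $\langle x,\psi(t,u)\rangle$ down modulo $2\pi\mathrm{i}\mathbb{Z}$, and the paper spends a further paragraph proving $\sup_{t}|\psi_i(t,u)|<\infty$ and excluding oscillation of $\mathrm{Im}\,\psi_i(t,u)$ between distinct multiples of $2\pi\mathrm{i}$ before concluding $\psi(t,u)\to0$. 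None of this appears in your proposal, so as written it does not constitute a proof; the tightness input and the semi-flow bootstrap are sound, but the passage from ``tight'' to ``$\psi(t,u)\to0$ for all $u\in\mathcal{U}$'' is missing.
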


\begin{proof}
Our proof is inspired by the proof of \cite[Theorem 2.4]{MR2599675}.
By Proposition \ref{thm:uniform boundedness for the first moment of X_t},
we have $\sup_{t\in\mathbb{R}_{\geqslant0}}\mathbb{E}_{x}[\Vert X_{t}\Vert]<\infty$
for all $x\in D$. Then for $M>0$,
\[
\mathbb{\mathbb{P}}_{x}\left(\left\Vert X_{t}\right\Vert >M\right)\leqslant\frac{\mathbb{E}_{x}\left[\left\Vert X_{t}\right\Vert \right]}{M}\leqslant\frac{\sup_{t\geqslant0}\mathbb{E}_{x}\left[\left\Vert X_{t}\right\Vert \right]}{M},
\]
which implies
\[
\sup_{t\geqslant0}\mathbb{P}_{x}\left(\left\Vert X_{t}\right\Vert >M\right)\to0\quad\mbox{as \ensuremath{M\to\infty}}.
\]
We see that under $\mathbb{P}_{x}$, the sequence $\{X_{t},\,t\geqslant0\}$
is tight. Consider an arbitrary subsequence $\{X_{t'}\}$. Then it
contains a further subsequence $\{X_{t''}\}$ converging in law to
some limiting random vector, say $X^{a}$. Since $X_{t''}$ converges
weakly to $X^{a}$ as $t''\to\infty$, Lévy's continuity theorem implies
that the characteristic function of $X_{t''}$ converges pointwise
to that of $X^{a}$, namely,
\[
\lim_{t''\rightarrow\infty}\mathrm{\mathbb{E}}_{x}\left[\exp\left\{ \langle u,X_{t''}\rangle\right\} \right]=\mathbb{E}\left[\exp\left\{ \langle u,X^{a}\rangle\right\} \right],\quad\text{for all }u\in\mathcal{U}.
\]
\textcolor{black}{We know by Proposition} \ref{prop:convergence of psi in a neighborhood of zero}\textcolor{black}{{}
that the original sequence} $\{X_{t}\}$\textcolor{black}{{} satisfies
}
\[
\lim_{t\to\infty}\mathbb{E}_{x}\left[\exp\left\{ \langle u,X_{t}\rangle\right\} \right]=\lim_{t\to\infty}\exp\left\{ \langle x,\psi(t,u)\rangle\right\} =1
\]
\textcolor{black}{for }all $u\in\mathcal{U}$ with $\|u\|<\delta$.
As a consequence, we get
\begin{equation}
\mathbb{E}\left[\exp\left\{ \langle u,X^{a}\rangle\right\} \right]=1,\quad\text{for all }u\in\mathcal{U}\quad\mbox{with\quad}\|u\|<\delta.\label{eq: look for contra}
\end{equation}
We claim that $X^{a}=0$ almost surely. To prove this, we consider
an arbitrary $z\in\Rd$ with $z\neq0$. Then there exists an $u_{0}\in\Rd$
with $\|u_{0}\|<\delta$ such that $0<\langle u_{0},z\rangle<\pi/6$,
and hence $0<\cos(\langle u_{0},z\rangle)<1$. Continuity of cosinus
implies that there exists an $\varepsilon>0$ such that $0\not\in B_{\varepsilon}(z):=\lbrace y\in\mathbb{R}^{d}\thinspace:\thinspace\Vert y-z\Vert<\varepsilon\rbrace$
and $0<\cos(\langle u_{0},y\rangle)<1$ for all $y\in B_{\varepsilon}(z)$.
Suppose that $\mathbb{P}\left(X^{a}\in B_{\varepsilon}(z)\right)>0$.
It follows that
\[
\mathbb{E}\left[\cos\left(\langle u_{0},X^{a}\rangle\right)\mathbbm{1}_{\lbrace X^{a}\in B_{\varepsilon}(z)\rbrace}\right]<\mathbb{P}\left(X^{a}\in B_{\varepsilon}(z)\right),
\]
which in turn implies
\begin{align*}
\mathrm{Re}\thinspace\mathbb{E}\left[\exp\left\{ \mathrm{i}\langle u_{0},X^{a}\rangle\right\} \right] & =\mathbb{E}\left[\cos\left(\langle u_{0},X^{a}\rangle\right)\right]\\
 & \leqslant\mathbb{E}\left[\cos\left(\langle u_{0},X^{a}\rangle\right)\mathbbm{1}_{\lbrace X^{a}\in B_{\varepsilon}(z)\rbrace}\right]\\
 & \quad+\mathbb{E}\left[\cos\left(\langle u_{0},X^{a}\rangle\right)\mathbbm{1}_{\lbrace X^{a}\not\in B_{\varepsilon}(z)\rbrace}\right]\\
 & <\mathbb{P}\left(X^{a}\in B_{\varepsilon}(z)\right)+\mathbb{P}\left(X^{a}\not\in B_{\varepsilon}(z)\right)\\
 & =1,
\end{align*}
a contradiction to \eqref{eq: look for contra}. We conclude that
$\mathbb{P}(X^{a}\in B_{\varepsilon}(z))=0$. Since $z\neq0$ is arbitrary,
$X^{a}$ must be $0$ almost surely. Now we have shown that every
subsequence of $\{X_{t}\}$ contains a further subsequence converging
weakly to $\delta_{0}$, so the original sequence $\{X_{t}\}$ must
converge to $\delta_{0}$ weakly. In view of this, we now denote $X^{a}$
by $X_{\infty}$ which is $0$ almost surely. We have thus shown that
for all $x\in D$ and $u\in\mathcal{U},$
\begin{equation}
\exp\left\{ \langle x,\psi(t,u)\rangle\right\} =\mathbb{E}_{x}\left[\exp\left\{ \langle u,X_{t}\rangle\right\} \right]\to1\quad\text{as\ensuremath{\quad}}t\to\infty.\label{eq:pointwise convergence of characterisitc function}
\end{equation}

From the above convergence of $\exp\left\{ \langle x,\psi(t,u)\rangle\right\} $
to $1$, we infer that for each $i=1,\ldots,d,$
\begin{equation}
\mathrm{Re}\thinspace\psi_{i}(t,u)\to0\quad\text{as }t\to\infty.\label{eq:convergence of re psi_i}
\end{equation}
Moreover, we must have $\sup_{t\in[0,\infty)}|\psi_{i}(t,u)|\leqslant C$
for some constant $C=C(u)<\infty$, otherwise, by continuity, $\mathrm{Im}\thinspace\psi_{i}(t,u)$
hits the set $\left\{ 2k\pi+\pi/2:\ k\in\mathbb{Z}\right\} $ infinitely
many times as $t\to\infty$, so $\sin\left(\mathrm{Im}\thinspace\psi_{i}(t,u)\right)=1$
infinitely often, contradicting the fact that $\exp\left\{ \langle x,\psi(t,u)\rangle\right\} \to1$
for all $x\in D$.

Let $z,\thinspace z'\in\mathbb{C}$ be two different accumulation
points of $\{\psi_{1}(t,u),\ t\geqslant0\}$ as $t\to\infty$, that
is, we can find sequences $t_{n},t'_{n}\to\infty$ such that $\psi_{1}(t_{n},u)\to z$
and $\psi_{1}(t'_{n},u)\to z'$. Using once again the convergence
in \eqref{eq:pointwise convergence of characterisitc function}, we
obtain that $z=\mathrm{i}2\pi k_{1}$ and $z'=\mathrm{i}2\pi k_{2}$
for some $k_{1},k_{2}\in\mathbb{Z}$. By \eqref{eq:convergence of re psi_i}
and a similar argument as in the last paragraph, $\psi_{1}(t,u)$
is not allowed to fluctuate between $z$ and $z'$, showing that $z=z'$.
So $z=\mathrm{i}2\pi k_{1}$ is the only accumulation point of $\{\psi_{1}(t,u),\ t\geqslant0\}$,
and $\psi_{1}(t,u)\to z=\mathrm{i}2\pi k_{1}$ as $t\to\infty$. Moreover,
we must have $k_{1}=0$, otherwise for some $x\in D$ we get $\exp\lbrace x_{1}2\pi\mathrm{i}k_{1}\rbrace\not=1$,
which is impossible due to \eqref{eq:pointwise convergence of characterisitc function}.
We conclude that
\[
\psi_{1}\left(t,u\right)\to0\quad\text{as }t\to\infty\text{ for all }u\in\mathcal{U}.
\]
In the same way it follows that $\psi_{i}\left(t,u\right)\to0$ as
$t\to0$ for all $i=2,\ldots,d$ and $u\in\mathcal{U}$.

Finally, we prove that the convergence of $\psi(t,u)$ to zero as
$t\to\infty$ is exponentially fast. Since $\psi(t,u)$ converges
to $0$ as $t\to\infty$, there exists a $t_{0}>0$ such that $\left\Vert \psi(t_{0},u)\right\Vert <\delta$.
Combining Lemma \ref{prop:convergence of psi in a neighborhood of zero}
with the semi-flow property of $\psi$, we conclude that
\[
\left\Vert \psi\left(t+t_{0},u\right)\right\Vert =\left\Vert \psi\left(t,\psi\left(t_{0},u\right)\right)\right\Vert \leqslant c_{1}\mathrm{e}^{-c_{2}t},\quad t\geqslant0,
\]
for some positive constants $c_{1}$ and $c_{2}$. Hence,
\[
\left\Vert \psi\left(t,u\right)\right\Vert \leqslant c_{3}\mathrm{e}^{-c_{2}t},\quad t\geqslant t_{0}.
\]
Since $\sup_{t\in[0,t_{0}]}\Vert\psi(t,u)\Vert<c_{4}$, where $c_{4}>0$
is a constant, it follows that
\[
\left\Vert \psi\left(t,u\right)\right\Vert \leqslant c_{5}\mathrm{e}^{-c_{2}t},\quad t\geqslant0,
\]
with another constant $c_{5}>0$. This completes our proof.
\end{proof}

\section{Proof of the main result}

In this section we will prove Theorem \ref{thm:stationarity of affine processes}.\\

Let $X$ be an affine process with state space $D$ and admissible
parameters $(a,\alpha,b,\beta,m,\mu)$. Recall that $F(u)$ is given
by \eqref{eq:representation of F(u)}. We start with the following
lemma.
\begin{lem}
\label{lem:finiteness of limit F(psi(s,u))}Suppose $\beta\in\mathbb{M}_{d}^{-}$
and $\int_{\left\{ \left\Vert \xi\right\Vert >1\right\} }\log\left\Vert \xi\right\Vert m\left(\mathrm{d}\xi\right)<\infty$.
Then
\[
\int_{0}^{\infty}\left|F\left(\psi\left(s,u\right)\right)\right|\mathrm{d}s<\infty\quad\text{for all }u\in\mathcal{U}.
\]
\end{lem}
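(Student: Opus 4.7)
The plan is to exploit the exponential decay $\|\psi(s,u)\|\leqslant c_1 \mathrm{e}^{-c_2 s}$ (which holds under $\beta\in\mathbb{M}_d^-$ by Proposition \ref{thm:exponential convergence of psi for all u} together with Remark \ref{rem:The-assumption-that}) and show that each of the three summands constituting $F(\psi(s,u))$ is integrable on $[0,\infty)$. The quadratic term $\langle \psi(s,u), a\psi(s,u)\rangle$ is bounded by $\|a\|\,c_1^2\mathrm{e}^{-2c_2 s}$ and the linear term $\langle b,\psi(s,u)\rangle$ by $\|b\|\,c_1\mathrm{e}^{-c_2 s}$; both are trivially integrable. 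Hence everything boils down to controlling the L\'evy integral
\[
I(s):=\int_{D\setminus\{0\}}\Bigl(\mathrm{e}^{\langle \psi(s,u),\xi\rangle}-1-\langle \psi^J(s,u),\xi_J\rangle\mathbbm{1}_{\{\|\xi\|\leqslant 1\}}(\xi)\Bigr)m(\mathrm{d}\xi),
\]
and I will estimate it separately on $\{\|\xi\|\leqslant 1\}$ and $\{\|\xi\|>1\}$.

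For the small-jump part, I would use that $\mathrm{Re}\,\langle \psi(s,u),\xi\rangle\leqslant 0$ for $\xi\in D$ (because $\mathrm{Re}\,\psi^I(s,u)\leqslant 0$, $\xi_I\geqslant 0$, and $\mathrm{Re}\,\psi^J(s,u)=0$). This gives the standard bound $|\mathrm{e}^{z}-1-z|\leqslant \tfrac{1}{2}|z|^2$ for $\mathrm{Re}\,z\leqslant 0$, hence
\[
\Bigl|\mathrm{e}^{\langle \psi,\xi\rangle}-1-\langle \psi^J,\xi_J\rangle\Bigr|\leqslant \tfrac{1}{2}\|\psi\|^2\|\xi\|^2+\|\psi\|\,\|\xi_I\|,\qquad \|\xi\|\leqslant 1,
\]
after adding and subtracting $\langle \psi^I,\xi_I\rangle$. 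The admissibility condition (iii) of Definition \ref{def:admissible parameters} makes $\int_{\|\xi\|\leqslant 1}\|\xi\|^2\,m(\mathrm{d}\xi)$ and $\int_{\|\xi\|\leqslant 1}\xi_i\,m(\mathrm{d}\xi)$ finite for each $i\in I$, so the small-jump contribution to $|I(s)|$ is $O(\|\psi(s,u)\|)=O(\mathrm{e}^{-c_2 s})$, and hence integrable on $[0,\infty)$.

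The main obstacle is the big-jump part $\int_{\|\xi\|>1}(\mathrm{e}^{\langle \psi,\xi\rangle}-1)m(\mathrm{d}\xi)$, because $m$ need not have a first moment at infinity. Here I would combine the two elementary estimates $|\mathrm{e}^z-1|\leqslant |z|$ (valid for $\mathrm{Re}\,z\leqslant 0$, via $\mathrm{e}^z-1=z\int_0^1 \mathrm{e}^{tz}\mathrm{d}t$) and $|\mathrm{e}^z-1|\leqslant 2$ to obtain
\[
\bigl|\mathrm{e}^{\langle \psi(s,u),\xi\rangle}-1\bigr|\leqslant \min\!\bigl(c_1 \mathrm{e}^{-c_2 s}\|\xi\|,\,2\bigr),\qquad \|\xi\|>1.
\]
Applying Fubini and computing the time integral explicitly, the critical quantity becomes
\[
\int_0^{\infty}\min\!\bigl(c_1\mathrm{e}^{-c_2 s}\|\xi\|,\,2\bigr)\mathrm{d}s\leqslant C_1+C_2\log\|\xi\|\quad\text{for all }\|\xi\|>1,
\]
for constants $C_1,C_2>0$ depending only on $c_1,c_2$. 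This is exactly where the hypothesis $\int_{\{\|\xi\|>1\}}\log\|\xi\|\,m(\mathrm{d}\xi)<\infty$ enters: together with the finiteness of $m(\{\|\xi\|>1\})$ (a consequence of admissibility), it yields
\[
\int_0^{\infty}\int_{\|\xi\|>1}\bigl|\mathrm{e}^{\langle \psi(s,u),\xi\rangle}-1\bigr|\,m(\mathrm{d}\xi)\,\mathrm{d}s<\infty.
\]
Combining the three bounds gives $\int_0^\infty |F(\psi(s,u))|\,\mathrm{d}s<\infty$, as claimed. The delicate point is the big-jump estimate, where the exponential decay rate of $\psi$ is precisely calibrated against the log-moment assumption on $m$.
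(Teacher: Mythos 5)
Your proposal is correct and follows essentially the same route as the paper: isolate the jump part, split at $\|\xi\|=1$, control the small jumps using $\mathrm{Re}\,\langle\psi,\xi\rangle\leqslant 0$ and the moment conditions in Definition \ref{def:admissible parameters}(iii), and for the big jumps calibrate $|\mathrm{e}^z-1|\leqslant\min(|z|,2)$ against the exponential decay of $\psi$ to produce the $C_1+C_2\log\|\xi\|$ bound (the paper obtains the same bound via the change of variables $t=\mathrm{e}^{-c_2 s}\|\xi\|$). The only cosmetic difference is your use of $|\mathrm{e}^z-1-z|\leqslant\tfrac12|z|^2$ on the small jumps where the paper uses an explicit integral representation; both rest on the same hypotheses.
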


\begin{proof}
Let $u\in\mathcal{U}$ be fixed. By Remark \ref{rem:The-assumption-that}
and Proposition \ref{thm:exponential convergence of psi for all u},
we can find constants $c_{1},\thinspace c_{2}>0$ depending on $u$
such that
\begin{equation}
\left\Vert \psi(s,u)\right\Vert \leqslant c_{1}\mathrm{e}^{-c_{2}s},\quad s\geqslant0.\label{eq:psi exp ineq}
\end{equation}
It is clear that finiteness of $\int_{0}^{\infty}\left|F\left(\psi\left(s,u\right)\right)\right|\mathrm{d}s$
depends only on the jump part of $F$. We define
\begin{align*}
\mathcal{I}\left(u\right) & =\int_{0}^{\infty}\int_{\left\{ 0<\left\Vert \xi\right\Vert \leqslant1\right\} }\left|\mathrm{e}^{\langle\xi,\psi(s,u)\rangle}-1-\langle\psi^{J}(s,u),\xi_{J}\rangle\right|m\left(\mathrm{d}\xi\right)\mathrm{d}s\\
 & \quad+\int_{0}^{\infty}\int_{\left\{ \left\Vert \xi\right\Vert >1\right\} }\left|\mathrm{e}^{\langle\xi,\psi(s,u)\rangle}-1\right|m\left(\mathrm{d}\xi\right)\mathrm{d}s\\
 & =:\mathcal{I}_{\ast}\left(u\right)+\mathcal{I}^{\ast}\left(u\right).
\end{align*}
With the latter fact in mind, we start with the big jumps. We can
apply Fubini's theorem to get
\begin{align*}
\mathcal{I}^{\ast}\left(u\right) & =\int_{\left\{ \left\Vert \xi\right\Vert >1\right\} }\int_{0}^{\infty}\left|\mathrm{e}^{\langle\xi,\psi(s,u)\rangle}-1\right|\mathrm{d}sm\left(\mathrm{d}\xi\right).
\end{align*}
Let us define $I_{1}\left(\xi\right):=\int_{0}^{\infty}\left|\exp\lbrace\langle\psi(s,u),\xi\rangle\rbrace-1\right|\mathrm{d}s$.
For $\|\xi\|>1$, by a change of variables $t:=\exp\left\{ -c_{2}s\right\} \left\Vert \xi\right\Vert $,
we get $\mathrm{d}s=-c_{2}^{-1}t^{-1}\mathrm{d}t$, and hence
\begin{align*}
I_{1}\left(\xi\right) & =-\frac{1}{c_{2}}\int_{\left\Vert \xi\right\Vert }^{0}\frac{1}{t}\left|\mathrm{e}^{\langle\xi,\psi\left(s^{-1}(t),u\right)\rangle}-1\right|\mathrm{d}t\\
 & =\frac{1}{c_{2}}\int_{0}^{\left\Vert \xi\right\Vert }\frac{1}{t}\left|\mathrm{e}^{\langle\xi,\psi\left(s^{-1}(t),u\right)\rangle}-1\right|\mathrm{d}t\\
 & \leqslant\frac{1}{c_{2}}\int_{0}^{1}\frac{1}{t}\left|\mathrm{e}^{\langle\xi,\psi\left(s^{-1}(t),u\right)\rangle}-1\right|\mathrm{d}t+\frac{1}{c_{2}}\int_{1}^{\left\Vert \xi\right\Vert }\frac{2}{t}\mathrm{d}t\\
 & =:I_{2}\left(\xi\right)+I_{3}\left(\xi\right).
\end{align*}
Note that
\begin{align*}
\left|\mathrm{e}^{\langle\xi,\psi\left(s^{-1}(t),u\right)\rangle}-1\right| & =\left|\int_{0}^{1}\mathrm{e}^{r\langle\xi,\psi\left(s^{-1}(t),u\right)\rangle}\langle\xi,\psi\left(s^{-1}(t),u\right)\rangle\mathrm{d}r\right|\\
 & \leqslant\left|\langle\xi,\psi\left(s^{-1}(t),u\right)\rangle\right|.
\end{align*}
Using \eqref{eq:psi exp ineq}, we obtain
\begin{align*}
I_{2}\left(\xi\right) & \leqslant\frac{1}{c_{2}}\int_{0}^{1}\frac{1}{t}\left|\langle\psi\left(s^{-1}(t),u\right),\xi\rangle\right|\mathrm{d}t\\
 & \leqslant\frac{1}{c_{2}}\int_{0}^{1}\frac{1}{t}\left\Vert \psi\left(s^{-1}(t),u\right)\right\Vert \left\Vert \xi\right\Vert \mathrm{d}t\\
 & \leqslant\frac{1}{c_{2}}\int_{0}^{1}\frac{c_{1}}{t}\mathrm{e}^{-c_{2}s^{-1}(t)}\left\Vert \xi\right\Vert \mathrm{d}t.
\end{align*}
Since $s^{-1}(t)=\log(t\Vert\xi\Vert^{-1})(-c_{2})^{-1}$, it follows
that
\[
I_{2}\left(\xi\right)\leqslant\frac{1}{c_{2}}\int_{0}^{1}c_{1}\mathrm{d}t=\frac{c_{1}}{c_{2}}.
\]
On the other hand, it is easy to see that
\[
I_{3}\left(\xi\right)\leqslant\frac{2}{c_{2}}\log\left\Vert \xi\right\Vert ,
\]
Having established the latter inequalities, we conclude that
\begin{align*}
{\normalcolor \left|\mathcal{I}^{\ast}\left(u\right)\right|} & {\normalcolor \leqslant\int_{\left\{ \left\Vert \xi\right\Vert >1\right\} }\left(I_{2}\left(\xi\right)+I_{3}\left(\xi\right)\right)m\left(\mathrm{d}\xi\right)}\\
{\normalcolor } & {\color{black}{\normalcolor {\color{red}{\normalcolor {\normalcolor }\leqslant\int_{\left\{ \left\Vert \xi\right\Vert >1\right\} }\left(\frac{c_{1}}{c_{2}}+\frac{2}{c_{2}}\log\left\Vert \xi\right\Vert \right)m\left(\mathrm{d}\xi\right)}}}}\\
{\normalcolor } & {\color{black}=\frac{c_{1}}{c_{2}}m\left(\left\{ \left\Vert \xi\right\Vert >1\right\} \right)+\frac{2}{c_{2}}\int_{\left\{ \left\Vert \xi\right\Vert >1\right\} }\log\left\Vert \xi\right\Vert m\left(\mathrm{d}\xi\right).}
\end{align*}
Because the Lévy measure $m(\mathrm{d}\xi)$ integrates $\mathbbm{1}_{\left\{ \left\Vert \xi\right\Vert >1\right\} }\log\Vert\xi\Vert$
by assumption, we see that
\begin{equation}
\mathcal{I}^{\ast}(u)<\infty.\label{eq: Lemma 4.1, I*}
\end{equation}

We now turn to $\mathcal{I}_{\ast}(\xi)$. We can write
\begin{align*}
\mathrm{e}^{\langle\xi,\psi(s,u)\rangle} & -1-\langle\psi^{J}\left(s,u\right),\xi_{J}\rangle\\
 & =\int_{0}^{1}\mathrm{e}^{r\langle\xi,\psi(s,u)\rangle}\langle\psi\left(s,u\right),\xi\rangle\mathrm{d}r-\langle\psi^{J}\left(s,u\right),\xi_{J}\rangle\\
 & =\int_{0}^{1}\mathrm{e}^{r\langle\xi,\psi(s,u)\rangle}\langle\psi^{I}\left(s,u\right),\xi_{I}\rangle\mathrm{d}r+\int_{0}^{1}\left(\mathrm{e}^{r\langle\xi,\psi(s,u)\rangle}-1\right)\langle\psi^{J}\left(s,u\right),\xi_{J}\rangle\mathrm{d}r\\
 & =\int_{0}^{1}\mathrm{e}^{r\langle\xi,\psi(s,u)\rangle}\langle\psi^{I}\left(s,u\right),\xi_{I}\rangle\mathrm{d}r\\
 & \qquad+\int_{0}^{1}\int_{0}^{1}\mathrm{e}^{rr'\langle\xi,\psi(s,u)\rangle}r\langle\xi,\psi(s,u)\rangle\langle\psi^{J}\left(s,u\right),\xi_{J}\rangle\mathrm{d}r\mathrm{d}r'.
\end{align*}
Noting \eqref{eq:psi exp ineq} and $\mathrm{Re}\left(\langle\xi,\psi(s,u)\rangle\right)\leqslant0$,
we deduce that for $\|\xi\|\leqslant1$ and $s\geqslant0$,
\begin{align}
\left|\mathrm{e}^{\langle\xi,\psi(s,u)\rangle}-1-\langle\psi^{J}\left(s,u\right),\xi_{J}\rangle\right| & \leqslant\left\Vert \psi^{I}\left(s,u\right)\right\Vert \left\Vert \xi_{I}\right\Vert +\big\|\psi\left(s,u\right)\big\|\left\Vert \xi\right\Vert \left\Vert \psi^{J}\left(s,u\right)\right\Vert \left\Vert \xi_{J}\right\Vert \nonumber \\
 & \leqslant(c_{1}+c_{1}^{2})\mathrm{e}^{-c_{2}s}\left(\left\Vert \xi_{I}\right\Vert +(\left\Vert \xi_{I}\right\Vert +\left\Vert \xi_{J}\right\Vert )\left\Vert \xi_{J}\right\Vert \right)\nonumber \\
 & \leqslant(c_{1}+c_{1}^{2})\mathrm{e}^{-c_{2}s}\left(2\left\Vert \xi_{I}\right\Vert +\left\Vert \xi_{J}\right\Vert ^{2}\right).\label{eq: esti for I_*}
\end{align}
 So\textcolor{red}{
\begin{align*}
{\normalcolor \mathcal{I}_{\ast}\left(u\right)} & {\normalcolor \leqslant(c_{1}+c_{1}^{2})\int_{0}^{\infty}\mathrm{e}^{-c_{2}s}\mathrm{d}s\int_{\left\{ 0<\left\Vert \xi\right\Vert \leqslant1\right\} }\left(2\left\Vert \xi_{I}\right\Vert +\left\Vert \xi_{J}\right\Vert ^{2}\right)m\left(\mathrm{d}\xi\right)<\infty,}
\end{align*}
}where the finiteness of the integral on the right-hand side follows
by Definition \ref{def:admissible parameters} (iii). Since \eqref{eq: Lemma 4.1, I*}
holds, it follows that
\[
\int_{0}^{\infty}\left|F\left(\psi\left(s,u\right)\right)\right|\mathrm{d}s\leqslant\mathcal{I}\left(u\right)=\mathcal{I}_{\ast}\left(u\right)+\mathcal{I}^{\ast}\left(u\right)<\infty.
\]
The lemma is proved.
\end{proof}
We are now ready to prove our main result.

\begin{proof}[Proof of Theorem \ref{thm:stationarity of affine processes}]
Recall that the characteristic function of $X_{t}$ is given by
\[
\mathbb{E}_{x}\left[\mathrm{e}^{\langle u,X_{t}\rangle}\right]=\exp\left\{ \phi\left(t,u\right)+\langle x,\psi\left(t,u\right)\rangle\right\} ,\quad\left(t,u\right)\in\mathbb{R}_{\geqslant0}\times\mathcal{U}.
\]
Using Remark \ref{rem:The-assumption-that}, Theorem \ref{thm:exponential convergence of psi for all u}
and Lemma \ref{lem:finiteness of limit F(psi(s,u))}, we have that
$\psi(t,u)\to0$ and
\[
\phi(t,u)=\int_{0}^{t}F\left(\psi(s,u)\right)\mathrm{d}s\to\int_{0}^{\infty}F\left(\psi(s,u)\right)\mathrm{d}s,\quad\text{as \ensuremath{t\to\infty.}}
\]
We now verify that $\int_{0}^{\infty}F\left(\psi\left(s,u\right)\right)\mathrm{d}s$
is continuous at $u=0$. It is easy to see that that $\int_{0}^{T}F\left(\psi(s,u)\right)\mathrm{d}s$
is continuous at $u=0$. It suffices to show that the convergence
$\lim_{T\to\infty}\int_{0}^{T}F\left(\psi(s,u)\right)\mathrm{d}s=\int_{0}^{\infty}F\left(\psi(s,u)\right)\mathrm{d}s$
is uniform for $u$ in a small neighborhood of $0$. By \eqref{eq0: lemma 3.8},
there exist $\delta>0$ and constants $c_{1},\thinspace c_{2}>0$
such that for all $B_{\delta}(0)\cap\mathcal{U}$,
\[
\left\Vert \psi\left(t,u\right)\right\Vert \leqslant c_{1}\exp\left\{ -c_{2}t\right\} ,\quad t\geqslant0.
\]
Define
\begin{align*}
\mathcal{I}_{T}\left(u\right) & =\int_{T}^{\infty}\int_{\left\{ 0<\left\Vert \xi\right\Vert \leqslant1\right\} }\left|\mathrm{e}^{\langle\xi,\psi(s,u)\rangle}-1-\langle\psi^{J}(s,u),\xi_{J}\rangle\right|m\left(\mathrm{d}\xi\right)\mathrm{d}s\\
 & \quad+\int_{T}^{\infty}\int_{\left\{ 1<\left\Vert \xi\right\Vert \leqslant K\right\} }\left|\mathrm{e}^{\langle\xi,\psi(s,u)\rangle}-1\right|m\left(\mathrm{d}\xi\right)\mathrm{d}s\\
 & \quad+\int_{T}^{\infty}\int_{\left\{ \left\Vert \xi\right\Vert >K\right\} }\left|\mathrm{e}^{\langle\xi,\psi(s,u)\rangle}-1\right|m\left(\mathrm{d}\xi\right)\mathrm{d}s\\
 & =:\mathcal{I}_{\ast,T}\left(u\right)+\mathcal{I}_{T}^{\ast}\left(u\right)+\mathcal{I}_{T}^{\ast\ast}\left(u\right),
\end{align*}
where $K>0$. Let $\varepsilon>0$ be arbitrary. By Fubini's theorem,
\begin{align*}
\mathcal{I}_{T}^{\ast\ast}\left(u\right) & =\int_{\left\{ \left\Vert \xi\right\Vert >K\right\} }\int_{T}^{\infty}\left|\mathrm{e}^{\langle\xi,\psi(s,u)\rangle}-1\right|\mathrm{d}sm\left(\mathrm{d}\xi\right).
\end{align*}
Set $I_{1}\left(\xi\right):=\int_{T}^{\infty}\left|\exp\lbrace\langle\psi(s,u),\xi\rangle\rbrace-1\right|\mathrm{d}s$.
As in the proof of Lemma \ref{lem:finiteness of limit F(psi(s,u))},
we introduce a change of variables $t:=\exp\left\{ -c_{2}(s-T)\right\} \left\Vert \xi\right\Vert $
and obtain for $\|\xi\|>1$,
\begin{align}
I_{1}\left(\xi\right) & =\frac{1}{c_{2}}\int_{0}^{\left\Vert \xi\right\Vert }\frac{1}{t}\left|\mathrm{e}^{\langle\xi,\psi\left(s^{-1}(t),u\right)\rangle}-1\right|\mathrm{d}t\label{eq: esti for I_1(xi)}\\
 & \leqslant\frac{1}{c_{2}}\int_{0}^{1}\frac{1}{t}\left|\mathrm{e}^{\langle\xi,\psi\left(s^{-1}(t),u\right)\rangle}-1\right|\mathrm{d}t+\frac{1}{c_{2}}\int_{1}^{\left\Vert \xi\right\Vert }\frac{2}{t}\mathrm{d}t\nonumber \\
 & \leqslant\frac{1}{c_{2}}\int_{0}^{1}\frac{c_{1}}{t}\mathrm{e}^{-c_{2}s^{-1}(t)}\left\Vert \xi\right\Vert \mathrm{d}t+\frac{2}{c_{2}}\log\left\Vert \xi\right\Vert \nonumber \\
 & \leqslant\frac{1}{c_{2}}\int_{0}^{1}c_{1}e^{-c_{2}T}\mathrm{d}t+\frac{2}{c_{2}}\log\left\Vert \xi\right\Vert .\nonumber
\end{align}
So
\begin{align*}
\mathcal{I}_{T}^{\ast\ast}\left(u\right) & \leqslant\int_{\left\{ \left\Vert \xi\right\Vert >K\right\} }\left(\frac{c_{1}}{c_{2}}e^{-c_{2}T}+\frac{2}{c_{2}}\log\left\Vert \xi\right\Vert \right)m\left(\mathrm{d}\xi\right)\\
 & \leqslant\frac{c_{1}}{c_{2}}m\left(\left\{ \left\Vert \xi\right\Vert >K\right\} \right)+\frac{2}{c_{2}}\int_{\left\{ \left\Vert \xi\right\Vert >K\right\} }\log\left\Vert \xi\right\Vert m\left(\mathrm{d}\xi\right).
\end{align*}
We now choose $K>0$ large enough such that $\mathcal{I}_{T}^{\ast\ast}\left(u\right)<\varepsilon/3$.

For $\mathcal{I}_{T}^{\ast}\left(u\right)$, by (\ref{eq: esti for I_1(xi)}),
we have
\begin{align*}
I_{1}\left(\xi\right) & =\frac{1}{c_{2}}\int_{0}^{\left\Vert \xi\right\Vert }\frac{1}{t}\left|\mathrm{e}^{\langle\xi,\psi\left(s^{-1}(t),u\right)\rangle}-1\right|\mathrm{d}t\\
 & \leqslant\frac{1}{c_{2}}\int_{0}^{\left\Vert \xi\right\Vert }\frac{c_{1}}{t}\mathrm{e}^{-c_{2}s^{-1}(t)}\left\Vert \xi\right\Vert \mathrm{d}t\\
 & \leqslant\frac{1}{c_{2}}\int_{0}^{\left\Vert \xi\right\Vert }c_{1}e^{-c_{2}T}\mathrm{d}t\\
 & \leqslant\frac{c_{1}}{c_{2}}e^{-c_{2}T}\left\Vert \xi\right\Vert ,
\end{align*}
which imples
\begin{align*}
\mathcal{I}_{T}^{\ast}\left(u\right) & \leqslant\int_{\left\{ 1<\left\Vert \xi\right\Vert \le K\right\} }\left(\frac{c_{1}}{c_{2}}e^{-c_{2}T}\left\Vert \xi\right\Vert \right)m\left(\mathrm{d}\xi\right)\\
 & \leqslant\frac{c_{1}}{c_{2}}e^{-c_{2}T}\int_{\left\{ 1<\left\Vert \xi\right\Vert \le K\right\} }\left\Vert \xi\right\Vert m\left(\mathrm{d}\xi\right)\to0,\quad\mbox{as \ensuremath{T\to\infty.}}
\end{align*}
So we find $T_{1}>0$ such that for $T>T_{1}$, $\mathcal{I}_{T}^{\ast}\left(u\right)<\varepsilon/3$.
It follows from (\ref{eq: esti for I_*}) that
\begin{align*}
\mathcal{I}_{\ast,T}\left(u\right) & \leqslant(c_{1}+c_{1}^{2})\int_{T}^{\infty}\mathrm{e}^{-c_{2}s}\mathrm{d}s\int_{\left\{ 0<\left\Vert \xi\right\Vert \leqslant1\right\} }\left(2\left\Vert \xi_{I}\right\Vert +\left\Vert \xi_{J}\right\Vert ^{2}\right)m\left(\mathrm{d}\xi\right)\to0,\quad\mbox{as \ensuremath{T\to\infty}}.
\end{align*}
Hence there exists $T_{2}>T_{1}$ such that for $T>T_{2}$, $\mathcal{I}_{\ast,T}\left(u\right)<\varepsilon/3$.
Finally, we get for $T>T_{2}$,
\[
\int_{T}^{\infty}\left|F\left(\psi\left(s,u\right)\right)\right|\mathrm{d}s\leqslant\mathcal{I}_{\ast,T}\left(u\right)+\mathcal{I}_{T}^{\ast}\left(u\right)+\mathcal{I}_{T}^{\ast\ast}\left(u\right)<\varepsilon.
\]
Moreover, the particular choice of above $K,T_{1},T_{2}$ do not depend
on $u\in B_{\delta}(0)\cap\mathcal{U}$. We thus obtain the desired
uniform convergence and further the continuity of $\int_{0}^{\infty}F\left(\psi\left(s,u\right)\right)\mathrm{d}s$
at $u=0$.

By Lévy's continuity theorem, the limiting distribution of $X_{t}$
exists and we denote it by $\pi$. The limiting distribution $\pi$
has characteristic function
\[
\int_{D}\mathrm{e}^{\langle u,x\rangle}\pi\left(\mathrm{d}x\right)=\exp\left\{ \int_{0}^{\infty}F\left(\psi(s,u)\right)\mathrm{d}s\right\} .
\]

We now verify that $\pi$ is the unique stationary distribution. We
start with the stationarity. Suppose that $X_{0}$ is distributed
according to $\pi$. Then, for any $u\in\mathcal{U}$,
\begin{align*}
\mathbb{E}_{\pi}\left[\exp\left\{ \langle u,X_{t}\rangle\right\} \right] & =\int_{D}\exp\left\{ \phi(t,u)+\langle x,\psi(t,u)\rangle\right\} \pi(\mathrm{d}x)\\
 & =\mathrm{e}^{\phi(t,u)}\int_{D}\exp\left\{ \langle x,\psi(t,u)\rangle\right\} \pi(\mathrm{d}x)\\
 & =\mathrm{e}^{\phi(t,u)}\int_{D}\mathrm{e}^{\langle x,\eta\rangle}\pi(\mathrm{d}x),
\end{align*}
where we substituted $\eta:=\psi(t,u)$ in the last equality. Note
that the integral on the right-hand side of the last equality is the
characteristic function of the limit distribution $\pi$. Therefore,
using the semi-flow property of $\psi$ in \eqref{eq:semi flow property},
we have
\begin{align*}
\mathbb{E}_{\pi}\left[\exp\left\{ \langle u,X_{t}\rangle\right\} \right] & =\mathrm{e}^{\phi(t,u)}\exp\left\{ \int_{0}^{\infty}F\left(\psi(s,\eta)\right)\mathrm{d}s\right\} \\
 & =\mathrm{e}^{\phi(t,u)}\exp\left\{ \int_{0}^{\infty}F\left(\psi\left(s,\psi(t,u)\right)\right)\mathrm{d}s\right\} \\
 & =\mathrm{e}^{\phi(t,u)}\exp\left\{ \int_{0}^{\infty}F\left(\psi(t+s,u)\right)\mathrm{d}s\right\} \\
 & =\mathrm{e}^{\phi(t,u)}\exp\left\{ \int_{t}^{\infty}F\left(\psi(s,u)\right)\mathrm{ds}\right\} .
\end{align*}
So, by the generalized Riccati equation \eqref{eq:riccati equation for psi ell}
for $\phi$,
\[
\mathbb{E}_{\pi}\left[\exp\left\{ \langle u,X_{t}\rangle\right\} \right]=\exp\left\{ \int_{0}^{\infty}F\left(\psi(s,u)\right)\mathrm{d}s\right\} =\int_{D}\mathrm{e}^{\langle x,u\rangle}\pi(\mathrm{d}x).
\]
Hence $\pi$ is a stationary distribution for $X$.

Finally, we prove the uniqueness of stationary distributions for $X$.
We proceed as in \cite[p.80]{MR2779872}. Suppose that there exists
another stationary distribution $\pi'$. Let $X_{0}$ be distributed
according to $\pi'$. Recall that for all $u\in\mathcal{U}$, $\psi(t,u)\to0$
as $t\to\infty$ in virtue of Theorem \ref{thm:exponential convergence of psi for all u}
and, by Lemma \ref{lem:finiteness of limit F(psi(s,u))}, $\phi(t,u)\to\int_{0}^{\infty}F\left(\psi(t,u)\right)\mathrm{d}s$
as $t\to\infty$. Hence, by dominated convergence,
\begin{align*}
\int_{D}\mathrm{e}^{\langle x,u\rangle}\pi'(\mathrm{d}x) & =\lim_{t\to\infty}\mathbb{E}_{\pi'}\left[\exp\left\{ \langle u,X_{t}\rangle\right\} \right]\\
 & =\lim_{t\to\infty}\int_{D}\exp\left\{ \phi(t,u)+\langle x,\psi(t,u)\rangle\right\} \pi'(\mathrm{d}x)\\
 & =\int_{D}\exp\left\{ \int_{0}^{\infty}F\left(\psi(s,u)\right)\mathrm{d}s\right\} \pi'(\mathrm{d}x)\\
 & =\exp\left\{ \int_{0}^{\infty}F\left(\psi(s,u)\right)\mathrm{d}s\right\} =\int_{D}\mathrm{e}^{\langle x,u\rangle}\pi(\mathrm{d}x).
\end{align*}
So $\pi=\pi'$.\end{proof}
\begin{acknowledgement*}
We would like to thank Martin Friesen for several helpful discussions.
\end{acknowledgement*}
\bibliographystyle{amsplain}

\begin{thebibliography}{10}

\bibitem{MR2284011}
Leif B.~G. Andersen and Vladimir~V. Piterbarg, \emph{Moment explosions in
  stochastic volatility models}, Finance Stoch. \textbf{11} (2007), no.~1,
  29--50. \MR{2284011}

\bibitem{MR3254346}
M\'aty\'as Barczy, Leif D\"oring, Zenghu Li, and Gyula Pap, \emph{Stationarity
  and ergodicity for an affine two-factor model}, Adv. in Appl. Probab.
  \textbf{46} (2014), no.~3, 878--898. \MR{3254346}

\bibitem{MR3340375}
M\'aty\'as Barczy, Zenghu Li, and Gyula Pap, \emph{Stochastic differential
  equation with jumps for multi-type continuous state and continuous time
  branching processes with immigration}, ALEA Lat. Am. J. Probab. Math. Stat.
  \textbf{12} (2015), no.~1, 129--169. \MR{3340375}

\bibitem{MR3540486}
\bysame, \emph{Moment formulas for multitype continuous state and continuous
  time branching process with immigration}, J. Theoret. Probab. \textbf{29}
  (2016), no.~3, 958--995. \MR{3540486}

\bibitem{MR785475}
John~C. Cox, Jonathan~E. Ingersoll, Jr., and Stephen~A. Ross, \emph{A theory of
  the term structure of interest rates}, Econometrica \textbf{53} (1985),
  no.~2, 385--407.

\bibitem{10.2307/222481}
Qiang Dai and Kenneth~J. Singleton, \emph{Specification analysis of affine term
  structure models}, The Journal of Finance \textbf{55} (2000), no.~5,
  1943--1978.

\bibitem{MR2243880}
D.~A. Dawson and Zenghu Li, \emph{Skew convolution semigroups and affine
  {M}arkov processes}, Ann. Probab. \textbf{34} (2006), no.~3, 1103--1142.
  \MR{2243880}

\bibitem{MR1994043}
D.~Duffie, D.~Filipovi{\'c}, and W.~Schachermayer, \emph{Affine processes and
  applications in finance}, Ann. Appl. Probab. \textbf{13} (2003), no.~3,
  984--1053.

\bibitem{MR1793362}
Darrell Duffie, Jun Pan, and Kenneth Singleton, \emph{Transform analysis and
  asset pricing for affine jump-diffusions}, Econometrica \textbf{68} (2000),
  no.~6, 1343--1376. \MR{1793362}

\bibitem{MR838085}
Stewart~N. Ethier and Thomas~G. Kurtz, \emph{Markov processes: Characterization
  and convergence}, Wiley Series in Probability and Mathematical Statistics:
  Probability and Mathematical Statistics, John Wiley \& Sons, Inc., New York,
  1986. \MR{838085 (88a:60130)}

\bibitem{MR2648460}
Damir Filipovi\'c and Eberhard Mayerhofer, \emph{Affine diffusion processes:
  theory and applications}, Advanced financial modelling, Radon Ser. Comput.
  Appl. Math., vol.~8, Walter de Gruyter, Berlin, 2009, pp.~125--164.
  \MR{2648460}

\bibitem{MR2599675}
Paul Glasserman and Kyoung-Kuk Kim, \emph{Moment explosions and stationary
  distributions in affine diffusion models}, Math. Finance \textbf{20} (2010),
  no.~1, 1--33. \MR{2599675}

\bibitem{Heston}
Steven~L. Heston, \emph{A closed-form solution for options with stochastic
  volatility with applications to bond and currency options}, Review of
  Financial Studies (1993), 6:327?343.

\bibitem{MR2931348}
Rudra~P. Jena, Kyoung-Kuk Kim, and Hao Xing, \emph{Long-term and blow-up
  behaviors of exponential moments in multi-dimensional affine diffusions},
  Stochastic Process. Appl. \textbf{122} (2012), no.~8, 2961--2993.
  \MR{2931348}

\bibitem{MR2779872}
Martin Keller-Ressel, \emph{Moment explosions and long-term behavior of affine
  stochastic volatility models}, Math. Finance \textbf{21} (2011), no.~1,
  73--98. \MR{2779872 (2012e:91126)}

\bibitem{MR3313754}
Martin Keller-Ressel and Eberhard Mayerhofer, \emph{Exponential moments of
  affine processes}, Ann. Appl. Probab. \textbf{25} (2015), no.~2, 714--752.
  \MR{3313754}

\bibitem{MR2922631}
Martin Keller-Ressel and Aleksandar Mijatovi{\'c}, \emph{On the limit
  distributions of continuous-state branching processes with immigration},
  Stochastic Process. Appl. \textbf{122} (2012), no.~6, 2329--2345.
  \MR{2922631}

\bibitem{MR2851694}
Martin Keller-Ressel, Walter Schachermayer, and Josef Teichmann, \emph{Affine
  processes are regular}, Probab. Theory Related Fields \textbf{151} (2011),
  no.~3-4, 591--611. \MR{2851694 (2012k:60219)}

\bibitem{MR2954043}
John~M. Lee, \emph{Introduction to smooth manifolds}, second ed., Graduate
  Texts in Mathematics, vol. 218, Springer, New York, 2013. \MR{2954043}

\bibitem{MR2760602}
Zenghu Li, \emph{Measure-valued branching {M}arkov processes}, Probability and
  its Applications (New York), Springer, Heidelberg, 2011. \MR{2760602}

\bibitem{MR2763096}
Eberhard Mayerhofer, Johannes Muhle-Karbe, and Alexander~G. Smirnov, \emph{A
  characterization of the martingale property of exponentially affine
  processes}, Stochastic Process. Appl. \textbf{121} (2011), no.~3, 568--582.
  \MR{2763096}

\bibitem{MR0295450}
Mark~A. Pinsky, \emph{Limit theorems for continuous state branching processes
  with immigration}, Bull. Amer. Math. Soc. \textbf{78} (1972), 242--244.
  \MR{0295450}

\bibitem{MR738769}
Ken-iti Sato and Makoto Yamazato, \emph{Operator-self-decomposable
  distributions as limit distributions of processes of {O}rnstein-{U}hlenbeck
  type}, Stochastic Process. Appl. \textbf{17} (1984), no.~1, 73--100.
  \MR{738769}

\bibitem{MR0245068}
K.~Urbanik, \emph{Self-decomposable probability distributions on {$R^{m}$}},
  Zastos. Mat. \textbf{10} (1969), 91--97. \MR{0245068}

\bibitem{MR3235239}
Oldrich Vasicek, \emph{An equilibrium characterization of the term structure
  [reprint of {J}. {F}inanc. {E}con. {\bf 5} (1977), no. 2, 177--188]},
  Financial risk measurement and management, Internat. Lib. Crit. Writ. Econ.,
  vol. 267, Edward Elgar, Cheltenham, 2012, pp.~724--735. \MR{3235239}

\bibitem{MR1629775}
Wolfgang Walter, \emph{Ordinary differential equations}, Graduate Texts in
  Mathematics, vol. 182, Springer-Verlag, New York, 1998, Translated from the
  sixth German (1996) edition by Russell Thompson, Readings in Mathematics.
  \MR{1629775}

\end{thebibliography}
\addcontentsline{toc}{section}{\refname}

\def\cprime{$'$} \def\cprime{$'$}
\providecommand{\bysame}{\leavevmode\hbox to3em{\hrulefill}\thinspace}
\providecommand{\MR}{\relax\ifhmode\unskip\space\fi MR }
\providecommand{\MRhref}[2]{%
  \href{http://www.ams.org/mathscinet-getitem?mr=#1}{#2}
}
\providecommand{\href}[2]{#2}

\end{document}